\definecolor{darkgreen}{rgb}{0,0.55,0}
\newtheoremstyle{break}
{\topsep}{\topsep}%
{\itshape}{}%
{\bfseries}{}%
{\newline}{}%
\newtheorem{thm}{Theorem}[section]
\newtheorem{lem}[thm]{Lemma}
\newtheorem{bsp}[thm]{Example}
\newtheorem{defn}[thm]{Definition}
\newtheorem{cor}[thm]{Corollary}
\newtheorem{rem}[thm]{Remark}
\newtheorem{ass}[thm]{Assumption}
\renewenvironment{proof}[1][\proofname]{\par
	\pushQED{\qed}%
	\normalfont\topsep6\p@\@plus6\p@\relax
	\trivlist
	\item[\hskip\labelsep
	\bfseries
	#1\@addpunct{.}]\mbox{}
}{%
	\popQED\endtrivlist\@endpefalse
}
\numberwithin{equation}{section}
\algrenewcommand\algorithmicrequire{\textbf{Input:}}
\algrenewcommand\algorithmicensure{\textbf{Output:}}
\def\eps{\varepsilon}
\DeclareMathOperator*{\argmin}{arg\,min}
\newcommand{\R}{\mathbb{R}}
\newcommand{\Rp}{\mathbb{R}_{\geq 0}}
\newcommand{\N}{\mathbb{N}}		
\newcommand{\norm}[1]{\left\lVert#1\right\rVert}
\newcommand{\X}{\mathbb{X}}
\newcommand{\U}{\mathbb{U}}
\newcommand{\K}{\mathcal{K}}
\newcommand{\B}{\mathcal{B}}
\newcommand{\LL}{\mathcal{L}}
\newcommand{\UNP}{\mathbb{U}_\mathcal{P}^N}
\newcommand{\JN}{\mathcal{J}^N}
\newcommand{\Pa}{\mathcal{P}}
\newcommand{\equ}{(x^{e},u^{e})}
\newcommand{\eq}{x^{e}}
\newcommand{\tell}{\widetilde{\ell}}
\newcommand{\tF}{\widetilde{F}}
\newcommand{\tJ}{\widetilde{J}}
\newcommand{\ub}{\textbf{u}}
\newcommand{\ubs}{\textbf{u}^\star}
\newcommand{\ubsN}{\textbf{u}^{\star,N}}
\newcommand{\posN}{\textbf{u}_{x_0}^{\star}}
\newcommand{\posNxk}{\textbf{u}_{x(k)}^{\star}}
\newcommand{\posNxn}{\textbf{u}_{x(0)}^{\star}}
\newcommand{\posNxK}{\textbf{u}_{x(K)}^{\star}}
\newcommand{\posNxkk}{\textbf{u}_{x(k+1)}^{\star}}
\newcommand{\Jni}{J_i^N}
\newcommand{\li}{\ell_i}
\begin{document}
	
	\title{Relaxed dissipativity assumptions and a simplified algorithm
		for multiobjective MPC}
	\author{
		Gabriele Eichfelder\thanks{Institute of Mathematics, Technische Universit\"{a}t Ilmenau, Po 10 05 65, D-98684 Ilmenau, Germany, {\texttt gabriele.eichfelder@tu-ilmenau.de}, Orcid 0000-0002-1938-6316} \and 
		Lars Gr\"une\thanks{Chair of Applied Mathematics, Mathematical Institute, 	Universit\"at Bayreuth, Germany, {\texttt  lars.gruene@uni-bayreuth.de}, Orcid 0000-0002-9331-4097 } \and 
		Lisa Kr\"ugel\thanks{Chair of Applied Mathematics, Mathematical Institute, 	Universit\"at Bayreuth, Germany, {\texttt lisa.kruegel@uni-bayreuth.de}, Orcid 0000-0002-2336-4099 }
		\and Jonas Schie\ss{}l\thanks{Chair of Applied Mathematics, Mathematical Institute, 	Universit\"at Bayreuth, Germany, {\texttt jonas.schiessl@uni-bayreuth.de}}
	}
	
	\date{\today}
	\maketitle

	\begin{abstract}
		We consider nonlinear model predictive control (MPC) with multiple competing cost functions. In each step of the scheme, a multiobjective optimal control problem with a nonlinear system and terminal conditions is solved. We propose an algorithm and give performance guarantees for the resulting MPC closed loop system. Thereby, we significantly simplify the assumptions made in the literature so far by assuming strict dissipativity and the existence of a compatible terminal cost for one of the competing objective functions only. We give conditions which ensure asymptotic stability of the closed loop and, what is more, obtain performance estimates for all cost criteria. Numerical simulations on various instances illustrate our findings. The proposed algorithm requires the selection of an efficient solution in each iteration, thus we examine several selection rules and their impact on the results. \\
		\textbf{Keywords: Multiobjective Model Predictive Control, Multiobjective Optimal Control}
	\end{abstract}
	
	\section{Introduction}
	{\let\thefootnote\relax\footnotetext{The authors are supported by DFG Grant Gr 1569/13-2.}}
	
	Model predictive control (MPC) is a control method in which in each sampling instant an optimal control problem is solved in order to determine the control input for the next sampling interval. Besides important system theoretic properties such as stability and constraint satisfaction, the optimization-based nature of the method also allows to conclude performance estimates of the closed loop, measured in terms of the optimization objective used for computing the control. In this paper we consider general (often also called economic) MPC formulations, for which such performance estimates have been obtained in \cite{AnAR09,AmRA11,AnAR12,Grue13,Gruene2014,GruP15a} (see also Chapter 7 in \cite{Gruene2017a} for a concise presentation). These references cover MPC schemes both with and without terminal constraints and costs. In both cases, strict dissipativity of the underlying optimal control problem is a crucial assumption.
	
	In many practical applications, as in \cite{Logist2010, Schmitt2020, Kajgaard2013}, it is desirable to consider not only a single but several cost criteria. For instance, in chemical process control, it may be desirable to stabilize a chemical process at a certain set point but at the same time approach the set point in such a way that the yield of the reaction is maximized (this is also the situation that we will illustrate in the numerical example in this paper). As these criteria might be conflicting, the resulting optimization problem is a multicriteria or multiobjective optimization problem \cite{Ehrgott2005}. In an MPC context, multiobjective optimization has been investigated, e.g., in \cite{Zavala2012,Zavala2015,Stieler2018,Gruene2019a,Flasskamp2020} in different contexts. Particularly, \cite{Stieler2018,Gruene2019a} present a multiobjective MPC algorithm for which some of the performance estimates known from classical MPC can be carried over, again using strict dissipativity as a main theoretical ingredient in the proofs. See also \cite{BFGV20} for an application of this algorithm to a PDE-governed control problem.
	
	Whenever multiple objective functions, i.e., cost criteria, are considered, one has to agree on an optimality notion used for such problems. A widely used concept is that of efficiency \cite{Ehrgott2005}: a feasible solution is called efficient in case none of the objective functions can be improved by choosing another feasible solution without a worsening of the values of at least one of the remaining functions. The vector of the corresponding values of all the objective functions under consideration is  called a nondominated point. These nondominated points form the nondominated set, which corresponds to the optimal value in case of just one objective function. In general, as the objective functions are competing, there is not a single efficient point which optimizes all objectives at the same time. Thus, the set of optimal solutions, called efficient set, is in general not unique. In single-objective MPC in each sampling instant typically an optimal solution is chosen to determine the control for the next step, and at least the optimal value of this optimal control problem is unique. In case of several competing criteria, a typically infinite number of efficient solutions as potential choices exist with non-unique values. Still, a certain efficient solution has to be chosen for the next step. This additional degree of freedom makes the extension of MPC to the multiobjective setting more challenging and requires new iterative approaches which guide this choice for guaranteeing convergence results and for allowing performance estimates. Within this work we will reach this aim by introducing upper bounds on the possible choices steered by the selected efficient solution in the first step.  Note that we do not rely on scalarization approaches like  a weighted sum of the objective functions as done in \cite{Sauerteig}, as this would require pre-knowledge for instance about the preferred weights, which is in general not available. For a discussion on the advantage of direct methods compared to scalarization based approaches we refer to \cite{Eichfelder20}. 
	
	The present paper builds on \cite{Gruene2019a} and is also closely related to \cite{Zavala2012}. More precisely, we significantly simplify the assumptions made in \cite{Gruene2019a}, by requiring strict dissipativity and the existence of a terminal cost satisfying the usual conditions from the MPC literature only for one of the optimization objectives (by default, always for the first), rather than for all of them. Despite the relaxed conditions, we obtain essentially the same results as in \cite{Gruene2019a} concerning the qualitative behavior and the averaged and non-averaged performance of the closed loop. For some of the results we can even simplify the proposed algorithm. Moreover, we develop conditions under which asymptotic stability of the closed loop (as opposed to mere convergence to a steady state) can be ensured. The close relation to \cite{Zavala2012} stems from the fact that, as in this reference, the first optimization objective plays a particular role and in particular determines the steady state to which the closed-loop solutions converge. However, the main difference of this paper to \cite{Zavala2012} is that we obtain performance estimates for all objectives.
	
	As in \cite{Gruene2019a}, our theoretical performance estimates depend on the choice of the efficient solution in the first step of the MPC iteration, i.e., at initial time. This implies that these estimates do not consider the choices of the efficient solutions in the subsequent iterations. So far, the effects of these choices on the performance of the MPC closed-loop solution were not investigated. In this paper, in addition to the theoretical results, we use numerical examples to shed some light on these effects.
	
	The paper is organized as follows: In Section \ref{sec: settings} we introduce the problems we are considering along with basic definitions and properties from multiobjective optimization as well as stability results for the single-objective case. In Section \ref{sec: algorithm} we introduce a new version of a multiobjective MPC algorithm for multiobjective optimal control problems with terminal constraints. We move on, in Section \ref{sec: performance} and \ref{sec: stability}, showing performance results for all cost criteria and a stability theorem. In Section \ref{sec: numerics} we illustrate our theoretical findings by numerical examples. Further, in Section \ref{sec:numerics2} we investigate the influence of the subsequent iterations of the algorithm by means of numerical examples. Section \ref{sec: conclusion} concludes this paper.
	
	\section{Problem formulation and Preliminaries}\label{sec: settings}
	In the following we  introduce the multiobjective optimal control problem we are considering. We give the 
	basic definitions and properties from multiobjective optimization and we recall stability
	results for the single-objective case. 
	\subsection{Multiobjective optimal control problems}
	We consider discrete time nonlinear systems of the form
	\begin{equation}\label{eq: system}
		x(k+1) = f(x(k),u(k)),\quad x(0)=x_0
	\end{equation}
	with $f: \R^n\times \R^m\to \R^n$ continuous. We denote the solution of system \eqref{eq: system} for a control sequence $\ub = (u(0),\dots, u(N-1))\in (\R^m)^{N}$ and initial value $x_0\in\R^n$ by $x_{\ub}(\cdot, x_0)$, or short by $x(\cdot)$ if there is no ambiguity about the respective control sequence and the initial value.
	
We impose nonempty state and input constraint sets $\X\subseteq \R^n$ and $\U\subseteq \R^m$, respectively, $\U^N$ as the set for control sequences of length $N$, as well as a nonempty terminal constraint set $\X_0\subseteq \R^n$, and the set of admissible control sequences for $x_0\in\X$ up to time $N\in\N$ by $\U^N(x_0):= \{ \ub\in \U^{N}\,|\, x_{\ub}(k, x_0)\in\X\; \forall\; k=1,\ldots,N-1 \text{ and}\; x_{\ub}(N, x_0)\in\X_0\}$. The terminal constraint $x_{\ub}(N,x_0)\in\X_0$ can generally not be satisfied for all initial values $x_0\in \X$, such that we define the feasible set 
	\begin{equation}\label{eq: X_N}
		\X_N:=\{x_0\in\X\mid  \exists\ \ub\in\U^N: x_\ub(k,x_0)\in\X,\ \forall\;k=1,\dots,N-1, \text{ and}\;  x_{\ub}(N,x_0)\in\X_0\},
	\end{equation}
	noting that $\U^N(x_0)\ne \emptyset$ if and only if $x_0\in\X_N$. Assumption \ref{ass: modTerminal} (iv), below, will guarantee that $\X_N\ne \emptyset$ for all $N\ge 1$.
	Further, a pair $\equ\in\X\times\U$ is called equilibrium if $\eq=f\equ$ holds.
	
	For given continuous stage cost $\ell_1:\X\times\U\to\R$ and continuous terminal cost $F_1:\X_0\to \R_{\geq 0}$, we define the cost functional $J_1^N:\X\times \U^N\to \R$ by 
	\begin{equation}\label{eq: cost_functional 1}
		J^N_1(x_0,\ub):=\sum_{k=0}^{N-1}\ell_1(x_{\ub}(k,x_0),u(k))+F_1(x_{\ub}(N,x_0)),
	\end{equation}
	and for $i\in\{2,\dots,s\}, s\geq 2$ we define continuous stage costs $\ell_i:\X\times\U\to\R$ and the corresponding cost functionals $\Jni:\X\times\U^N\to \R$ by
	\begin{equation}
		\Jni(x_0,\ub):=\sum_{k=0}^{N-1}\li(x_{\ub}(k,x_0),u(k))
	\end{equation}
	for horizon $N\in\N$ with $N\geq 2$. Here, $F_1$ is defined on the terminal constraint set $\X_0\subseteq \X$ and we need to ensure that $x_{\ub}(N,x_0)\in\X_0$, i.e.,   $J^N_1(x_0,\ub)$ is well defined for $x_0\in\X_N$ and $\ub\in \U^N(x_0)$ and we will only use it for such arguments in the remainder of this paper.
	
	We remark that we do not need any terminal costs for $i\in\{2,\dots,s\}$, which significantly simplifies the design compared to \cite{Gruene2019a,Stieler2018}. We aim on minimizing all cost functionals $J_1^N,\dots, J_s^N$ at the same time for given $x_0$ w.r.t.\ $\ub$ and along a solution of \eqref{eq: system}. Hence, we obtain a multiobjective optimal control problem with terminal constraints and costs
	\begin{equation}\tag{MO OCP}\label{MO MPC terminal}
		\begin{split}
			\min_{\ub\in\U^N(x_0)}J^N(x_0,\ub)&:= \left(J_1^N(x_0,\ub),\dots,J_s^N(x_0,\ub)\right)\\
			x(k+1)&=f(x(k),u(k)), \quad k=0,\dots,N-1\\
			x(0)&=x_0,\quad x(k)\in\X\\
			x_{\ub}(N,x_0)&\in \X_0.
		\end{split}
	\end{equation}
	Throughout this paper, we will only consider multiobjective optimal control problems with terminal constraints of the form \eqref{MO MPC terminal} with $s\geq 2$ and $N\geq2$.
	
	\subsection{Basics of multiobjective optimization}
	
	In the presence of multiple competing objectives we need an appropriate notion of optimality. In general, there will not be one optimal $\ub$ such that all cost functionals are minimized simultaneously. The formalization we will use in this paper is based on the componentwise ordering in the image space $\R^s$ and is summarized in the next definition. For this definition and an introduction to multiobjective optimization we refer, for instance, to \cite{Ehrgott2005} or the recent survey \cite{Eichfelder20}.
	\begin{defn}
		Let $N$ be the horizon length. A sequence $\ubs\in\U^N(x_0)$ is called efficient for \eqref{MO MPC terminal} with $x_0\in\X$ if there is no $\ub\in\U^N(x_0)$ such that
		\begin{align*}
			\forall\ i\in\{1,\dots,s\}:\; \Jni(x_0,\ub)&\leq \Jni(x_0,\ubs) \text{ and}\\
			\exists\ i\in\{1,\dots,s\}:\; \Jni(x_0,\ub)&< \Jni(x_0,\ubs).
		\end{align*}
		The objective value $J^N(x_0,\ubs)=(J_1^N(x_0,\ubs),\dots,J_s^N(x_0,\ubs))$ is called nondominated.
	\end{defn}
	Usually, there is not only one (unique) efficient solution of \eqref{MO MPC terminal} but there exists a set of such solutions and such nondominated values. Therefore, the set of all efficient solutions of length $N$ for initial value  $x_0\in\X_N$ will be denoted by $\UNP(x_0)$. We define the set of attainable values by
	\[\JN(x_0):=\{J^N(x_0,\ub)=(J^N_1(x_0,\ub),\dots,J^N_s(x_0,\ub))\mid \ub\in\U^N(x_0)\},\]
	and the nondominated set by
	\[\JN_\Pa(x_0):=\{J^N(x_0,\ub)\mid \ub\in\UNP(x_0)\}.\]
	This set is often referred as the Pareto front. In this paper, the $\min$-operator is defined as
	\[\min_{\ub\in\U^N(x_0)}J^N(x_0,\ub)= \JN_\Pa(x_0)\]
	and, accordingly
	\[\argmin_{\ub\in\U^N(x_0)}J^N(x_0,\ub)=\UNP(x_0).\]
	We now provide basic definitions and results from the theory of multiobjective optimization, adapted from \cite{Ehrgott2005, Sawaragi1985} to our setting.
	\begin{defn}[External stability]
		The set $\JN_\Pa(x_0)$ is called externally stable for $\JN(x_0)$ if for all $y\in\JN(x_0)$ there is $y_\Pa\in\JN_\Pa(x_0)$ such that $y\geq y_\Pa$ holds componentwise.
	\end{defn}
	\begin{defn}[Cone-Compactness]
		The set $\JN(x_0)$ is called $\Rp^s$-compact if for all $y\in\JN(x_0)$ the set $(y-\Rp^s)\cap\JN(x_0)$ is compact.
	\end{defn}
	Here, we write $z-\Rp^s$ for the difference of the sets $\{z\}$ and $\Rp^s:=\{y\in\R^s\mid y_i\geq 0\; \forall i=1,\dots,s\}$ in the Minkowski sense. The next theorem states a condition for external stability of the set $\JN_\Pa(x_0)$ and a proof can be found in \cite{Ehrgott2005, Sawaragi1985}.
	\begin{thm}\label{thm: ext_stab}
		Given a horizon $N\in\N$ and an initial value $x_0\in\X_N$. If $\JN(x_0)\neq \emptyset$ and $\JN(x_0)$ is $\Rp^s$-compact, then the set $\JN_\Pa(x_0)$ is externally stable for $\JN(x_0)$.
	\end{thm}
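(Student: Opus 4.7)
The plan is to exploit the $\R_{\geq 0}^s$-compactness to localize the search for a nondominated point dominating a given $y$, and then use a strictly monotone scalarization to pick one such point.

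Fix an arbitrary $y\in\JN(x_0)$ and consider the set $S(y):=(y-\Rp^s)\cap\JN(x_0)$, i.e., the collection of all attainable values that dominate or equal $y$ in the componentwise order. By assumption $S(y)$ is compact, and it is nonempty since $y\in S(y)$. The goal is then to produce a nondominated point $y_\Pa\in\JN_\Pa(x_0)$ inside $S(y)$; by construction any such point will satisfy $y_\Pa\leq y$, which is exactly external stability.

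To construct $y_\Pa$, I would use a strictly $\Rp^s$-monotone continuous scalarization, the simplest choice being $\varphi:\R^s\to\R$ with $\varphi(z):=\sum_{i=1}^s z_i$. Since $S(y)$ is compact and nonempty and $\varphi$ is continuous, the minimum $y_\Pa\in\argmin_{z\in S(y)}\varphi(z)$ is attained. I would then verify that $y_\Pa$ is nondominated in the full set $\JN(x_0)$ (not merely minimal within $S(y)$): suppose for contradiction that some $y'\in\JN(x_0)$ satisfies $y'\leq y_\Pa$ componentwise with at least one strict inequality. From $y_\Pa\leq y$ and transitivity we get $y'\leq y$, hence $y'\in S(y)$. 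Strict monotonicity of $\varphi$ then yields $\varphi(y')<\varphi(y_\Pa)$, contradicting the choice of $y_\Pa$. Therefore $y_\Pa\in\JN_\Pa(x_0)$ with $y\geq y_\Pa$, proving external stability.

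There is no real obstacle in this argument; the only subtlety is that optimality of $y_\Pa$ with respect to $\varphi$ on the restricted set $S(y)$ is a priori weaker than nondominatedness in $\JN(x_0)$, and one must argue as above that any dominator of $y_\Pa$ automatically lies inside $S(y)$. Note further that we did not invoke any convexity of $\JN(x_0)$, which is essential because the attainable set of a nonlinear control problem is typically nonconvex; the $\Rp^s$-compactness condition is exactly what replaces the convexity assumption that would otherwise be needed, e.g., for separation-theorem based arguments.
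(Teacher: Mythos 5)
Your proof is correct. The paper itself does not prove this theorem but delegates it to \cite{Ehrgott2005, Sawaragi1985}; the classical argument given there localizes to the very same section $S(y)=(y-\Rp^s)\cap\JN(x_0)$ but then extracts a minimal element of $S(y)$ with respect to the componentwise partial order via Zorn's lemma (every chain in $S(y)$ admits a lower bound because the associated nested family of nonempty closed subsections of the compact set $S(y)$ has the finite intersection property). You replace this order-theoretic step by minimizing the continuous, strictly $\Rp^s$-monotone functional $\varphi(z)=\sum_{i=1}^s z_i$ over the compact, nonempty set $S(y)$, which is more elementary and avoids the axiom of choice; the concluding transitivity argument --- any dominator of the minimizer already lies in $S(y)$, hence would strictly decrease $\varphi$ --- is common to both proofs and is exactly the point you correctly flag as the only subtlety. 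What the Zorn's-lemma route buys is generality: it carries over to ordering cones other than the nonnegative orthant and to weaker cone-completeness or semicompactness hypotheses under which a continuous strictly monotone scalarization need not exist. For the orthant-ordered, finite-dimensional setting of the theorem as stated, your scalarization argument is fully rigorous and, as you note, requires no convexity of $\JN(x_0)$.
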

	Since Theorem \ref{thm: ext_stab} is in practice difficult to verify, the next lemma provides easily checkable conditions for external stability which we will need for feasibility below.
	\begin{lem}\label{lem: externStab}
		Let $\U$ be compact, $\X$ and $\X_0$ be closed and let  $x\in\X_N$ and $N\in\N$. Then the set $\JN_\Pa(x)$ is externally stable for $\JN(x)$.
	\end{lem}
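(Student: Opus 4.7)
The plan is to reduce the claim to Theorem \ref{thm: ext_stab} by verifying its two hypotheses: that $\JN(x)$ is nonempty and that it is $\Rp^s$-compact. Nonemptiness is immediate from the assumption $x\in\X_N$, which by definition of $\X_N$ yields some $\ub\in\U^N(x)$, whence $J^N(x,\ub)\in\JN(x)$.

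For $\Rp^s$-compactness, my strategy is to show that the whole set $\JN(x)$ is compact in $\R^s$; then for any $y\in\JN(x)$ the intersection $(y-\Rp^s)\cap\JN(x)$ is a closed subset of a compact set and hence compact. To see that $\JN(x)$ is compact I would argue in two steps. First, the admissible control set $\U^N(x)$ is a compact subset of $(\R^m)^N$: since $\U$ is compact, the product $\U^N$ is compact; and by continuity of $f$ and an easy induction, the iterate map $\ub\mapsto x_\ub(k,x)$ is continuous on $\U^N$ for each $k=1,\dots,N$, so the constraints $x_\ub(k,x)\in\X$ and $x_\ub(N,x)\in\X_0$ cut out a closed (hence compact) subset of $\U^N$ because $\X$ and $\X_0$ are closed. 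Second, each $\Jni(x,\cdot)$ is continuous on $\U^N(x)$, being a finite sum of continuous stage costs $\ell_i$ composed with the continuous iterate map (with the additional continuous terminal term $F_1$ for $i=1$). Therefore $J^N(x,\cdot):\U^N(x)\to\R^s$ is continuous, and its image $\JN(x)$ is the continuous image of a compact set, hence compact.

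Combining these observations, Theorem \ref{thm: ext_stab} applies and yields that $\JN_\Pa(x)$ is externally stable for $\JN(x)$. There is no real obstacle here; the only point requiring mild care is unwinding the definitions to see that $\U^N(x)$ is defined by finitely many closed constraints, which is where closedness of $\X$ and $\X_0$ is used, while compactness of $\U$ is what upgrades closedness of $\U^N(x)$ to compactness.
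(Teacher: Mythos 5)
Your proposal is correct and is precisely the standard argument behind the cited references (Lemma 4.8 in Stieler et al.\ and Lemma 2.5 in Gr\"une et al.), to which the paper defers: one verifies the hypotheses of Theorem \ref{thm: ext_stab} by showing $\U^N(x)$ is compact (closed constraints cut from the compact $\U^N$ via continuity of the iterated dynamics) and $J^N(x,\cdot)$ is continuous, so that $\JN(x)$ is a nonempty compact set and in particular $\Rp^s$-compact. No gaps; the reliance on continuity of the $\ell_i$ and of $F_1$ is covered by the standing assumptions in the problem formulation.
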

	\begin{proof}
		Analogous to the proof of Lemma 4.8 in \cite{Stieler2018} or Lemma 2.5 in \cite{Gruene2019a}.
	\end{proof}
	In the single-objective case an immediate consequence of the dynamic programming principle (DPP) is that tails of optimal control sequences are again optimal control sequences. The same result holds for efficient solutions and can be found in \cite[Lemma 4.1]{Stieler2018}.
	\begin{lem}[Tails of efficient solutions are efficient solutions]
		Let $K<N$. If $\ubsN\in\UNP(x_0)$, then $\ub^{\star,K}\in\U_\Pa^{N-K}(x_{\ubsN}(K,x_0))$ with $\ub^{\star,K}:=\ubsN(\cdot+K)$ for all $K<N$, where $\ubsN(\cdot+K):=(u^{\star,N}(K),u^{\star,N}(K+1),\dots,u^{\star,N}(N-1))$.
	\end{lem}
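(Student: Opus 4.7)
The plan is to proceed by contradiction, exploiting the additive (dynamic-programming) structure of the cost functionals $J_i^N$ in exactly the same way one uses the DPP for single-objective problems. Suppose that $\ub^{\star,K}\notin \U_\Pa^{N-K}(x_{\ubsN}(K,x_0))$. Then, by the definition of efficiency, there exists a competitor $\tilde{\ub}\in\U^{N-K}(x_{\ubsN}(K,x_0))$ with
\[
  J_i^{N-K}(x_{\ubsN}(K,x_0),\tilde{\ub}) \le J_i^{N-K}(x_{\ubsN}(K,x_0),\ub^{\star,K}) \quad\forall\, i\in\{1,\dots,s\},
\]
and strict inequality for at least one index $i_0$. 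The key construction is the concatenation
\[
  \hat{\ub} := \bigl(u^{\star,N}(0),\dots,u^{\star,N}(K-1),\tilde{u}(0),\dots,\tilde{u}(N-K-1)\bigr)\in(\R^m)^N.
\]

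Next I would verify that $\hat{\ub}\in\U^N(x_0)$. Feasibility on $[0,K-1]$ follows because the first $K$ controls agree with the feasible sequence $\ubsN$, hence the generated states coincide with $x_{\ubsN}(k,x_0)\in\X$ there. For $k\in\{K,\dots,N\}$, since $\hat{\ub}$ has been restarted at the state $x_{\ubsN}(K,x_0)$ with the feasible control $\tilde{\ub}\in\U^{N-K}(x_{\ubsN}(K,x_0))$, the trajectory lies in $\X$ on $[K,N-1]$ and hits $\X_0$ at time $N$. Thus $\hat{\ub}$ is admissible.

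Now I split each cost along the time instant $K$. For $i\in\{2,\dots,s\}$, where no terminal cost appears,
\[
  J_i^N(x_0,\hat{\ub}) = \sum_{k=0}^{K-1}\ell_i(x_{\ubsN}(k,x_0),u^{\star,N}(k)) + J_i^{N-K}(x_{\ubsN}(K,x_0),\tilde{\ub}),
\]
and the analogous identity with $\ubsN$ in place of $\hat{\ub}$ (and $\ub^{\star,K}$ in place of $\tilde{\ub}$) holds. For $i=1$, the additional terminal term $F_1(x_{\hat{\ub}}(N,x_0))=F_1(x_{\tilde{\ub}}(N-K,x_{\ubsN}(K,x_0)))$ is already absorbed into $J_1^{N-K}(x_{\ubsN}(K,x_0),\tilde{\ub})$, so the same decomposition remains valid. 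Subtracting the two identities yields, for every $i$,
\[
  J_i^N(x_0,\hat{\ub}) - J_i^N(x_0,\ubsN) = J_i^{N-K}(x_{\ubsN}(K,x_0),\tilde{\ub}) - J_i^{N-K}(x_{\ubsN}(K,x_0),\ub^{\star,K}),
\]
which by assumption is $\le 0$ for all $i$ and $<0$ for $i=i_0$. Hence $\hat{\ub}$ dominates $\ubsN$, contradicting $\ubsN\in\UNP(x_0)$.

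There is no real obstacle here; the only point requiring mild care is the bookkeeping for $i=1$, namely noticing that the terminal cost only depends on the state at time $N$ and is therefore entirely captured inside the tail cost $J_1^{N-K}$, so that the additive splitting used in the single-objective DPP carries over componentwise.
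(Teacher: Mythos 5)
Your proof is correct: the contradiction-plus-concatenation argument with the additive splitting of each $J_i^N$ at time $K$ (and the observation that $F_1$ is absorbed into the tail cost) is exactly the standard dynamic-programming argument for this statement. The paper itself does not reproduce a proof but only cites Lemma 4.1 of the reference, where essentially this same argument is given, so there is nothing to add.
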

	
	\subsection{Auxiliary results on stability and dissipativity}
	We will make use of comparison-functions defined by
	\begin{align*}
		\mathcal{K} :=\{\alpha:\Rp\to\Rp&\mid \alpha \text{ is continuous and}\\
		&\quad\text{strictly increasing with }\alpha(0)=0\}\\
		\mathcal{K}_\infty :=\{\alpha:\Rp\to\Rp&\mid\alpha\in\mathcal{K},\  \alpha \text{ is unbounded}\}\\
		\mathcal{L}:=\{\delta:\Rp\to\Rp&\mid\delta\text{ is continuous and}\\
		&\quad\text{strictly decreasing with} \lim_{t\to\infty}\delta(t)=0\}
	\end{align*}
	Moreover, with $\B_\eps(x_0)\subseteq \R^n$ we denote the open ball with radius $\eps>0$ around $x_0$.
	
	The MPC Algorithms 1 and 2, below, generate a solution trajectory that is referred to as the MPC closed loop. 
	For analyzing its stability, we recall the definition of a uniform time-varying Lyapunov function, which can be found in \cite[Definition 2.21]{Gruene2017a}. To this end, we consider a general time-varying discrete-time dynamical system given by
	\begin{equation} \label{eq:tvsys} x(k+1) = g(k,x(k))
	\end{equation}
	with $g:\N_0\times\X\to\X$, initial condition $x(0)=x_0$, and state space $\X\subseteq \R^n$.
	\begin{defn}[Uniform time-varying Lyapunov function]\label{def: lyap}
		Consider system \eqref{eq:tvsys}, an equilibrium $\eq\in\X$, i.e., $\eq=g(k, \eq)$, $k\in\N$, subsets of the state space $S(k)\subseteq\X$, $k\in\N_0$, and define $\mathcal{S}=\{(k,x)\mid k\in\N_0, x\in S(k)\}$. A function $V:\mathcal{S}\to\Rp$ is called uniform time-varying Lyapunov function on $\mathcal{S}$ if the following conditions are satisfied:
		\begin{enumerate}[(i)]
			\item There exist functions $\alpha_1, \alpha_2\in\K_\infty$ such that
			\[\alpha_1(\norm{x-\eq})\leq V(k,x)\leq \alpha_2(\norm{x-\eq})\]
			holds for all $(k,x)\in S$.
			\item There exists a function $\alpha_V\in\K$ such that
			\[V(k+1, g(k,x))\leq V(k,x)-\alpha_V(\norm{x-\eq})\]
			holds for all $k\in\N_0$ and $x\in S(k)$ with $g(k,x)\in S(k+1)$.
		\end{enumerate}
	\end{defn}
	The following theorem shows that the existence of such a Lyapunov function ensures asymptotic stability. For a proof we refer to \cite[Theorem 2.22]{Gruene2017a} and for the definition of the stability conditions used in its formulation to \cite[Definitions 2.14 and 2.16]{Gruene2017a}.
	\begin{thm}\label{thm: stab with lyap}
		Let $\eq$ be an equilibrium of system \eqref{eq:tvsys}, i.e., $\eq=g(k, \eq)$, $k\in\N$, and assume there exists a uniform time-varying Lyapunov function $V$ on a set $\mathcal{S}\subset\N_0\times \R^n$ as defined in Definition \ref{def: lyap}. If each $S(k)$ contains a ball $\B_\nu(\eq)$ with radius $\nu>0$ with $g(k, x)\in S(k+1)$ for all $x\in\B_\nu(\eq)$, then $\eq$ is locally asymptotically stable with $\eta=\alpha_2^{-1}\circ\alpha_1(\nu)$. If the family of sets $S(k)$ is forward invariant (i.e., if $g(k,x)\in S(k+1)$ for all $(k,x)\in \mathcal{S}$) then $\eq$ is asymptotically stable on $S(k)$. If $S(k)=\R^n$ holds for all $k\in\N_0$ then $\eq$ is globally asymptotically stable.
	\end{thm}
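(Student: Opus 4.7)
The plan is to run a standard Lyapunov argument along the trajectory $x(k)$ of \eqref{eq:tvsys}, using the sandwich bound (i) to transfer information between $V$ and $\|x-\eq\|$, and using the strict decrease (ii) to get both stability and attractivity. The only genuinely nontrivial point will be to ensure that the trajectory stays in the domain $\mathcal{S}$ where the Lyapunov inequalities actually hold.

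First I would fix $x_0 \in \B_\eta(\eq)$ with $\eta = \alpha_2^{-1}\circ \alpha_1(\nu)$, and show by induction that $x(k) \in \B_\nu(\eq) \subseteq S(k)$ for all $k \in \N_0$. Indeed, (ii) combined with the inductive hypothesis implies $V(k+1,x(k+1)) \le V(k,x(k)) \le \dots \le V(0,x_0)$, so (i) gives
\[
\alpha_1(\|x(k+1) - \eq\|) \le V(k+1,x(k+1)) \le V(0,x_0) \le \alpha_2(\|x_0 - \eq\|) \le \alpha_2(\eta) = \alpha_1(\nu).
\]
Inverting $\alpha_1 \in \K_\infty$ yields $\|x(k+1) - \eq\| \le \nu$, which both closes the induction (since $g(k,x) \in S(k+1)$ for $x \in \B_\nu(\eq)$) and, via $\|x(k)-\eq\| \le \alpha_1^{-1}\circ\alpha_2(\|x_0-\eq\|)$, already provides the $\K_\infty$-estimate required for stability.

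For attractivity, I would observe that the sequence $V(k,x(k))$ is monotonically decreasing and bounded below by $0$, hence convergent. Summing (ii) telescopically gives
\[
\sum_{k=0}^\infty \alpha_V(\|x(k)-\eq\|) \le V(0,x_0) < \infty,
\]
which forces $\alpha_V(\|x(k)-\eq\|) \to 0$ and therefore $\|x(k)-\eq\| \to 0$ by the $\K$-property of $\alpha_V$. Together with the stability estimate this yields local asymptotic stability on $\B_\eta(\eq)$.

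For the two remaining statements the same argument applies verbatim once the forward-invariance issue is handled. If $\mathcal{S}$ is forward invariant, the induction $x(k)\in S(k)$ is immediate for every $x_0 \in S(0)$, and the monotonicity/telescoping argument goes through on all of $S(k)$, delivering asymptotic stability on the family $S(k)$. Finally, if $S(k)=\R^n$, then $\mathcal{S} = \N_0\times\R^n$ is trivially forward invariant, and because $\alpha_1,\alpha_2 \in \K_\infty$ the value $\eta = \alpha_2^{-1}\circ \alpha_1(\nu)$ can be made arbitrarily large by choosing $\nu$ large, which upgrades the conclusion to global asymptotic stability. The only delicate step in the whole argument is the first one, namely keeping the trajectory inside $S(k)$ in the purely local setting; the rest is routine application of the $\K_\infty$-bounds.
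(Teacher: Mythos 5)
The paper itself gives no proof of this theorem; it only cites \cite[Theorem 2.22]{Gruene2017a}, so there is nothing in the source to compare against line by line. Your argument is the standard direct Lyapunov proof and its main structure is sound: the induction keeping the trajectory in $\B_\nu(\eq)$, the monotone decrease of $V(k,x(k))$, and the telescoping sum forcing $\alpha_V(\|x(k)-\eq\|)\to 0$ are all correct. Two small points deserve explicit mention. First, the base case of your induction tacitly uses $\eta\le\nu$; this does hold, but only because the sandwich bound (i) on $\B_\nu(\eq)\subseteq S(k)$ forces $\alpha_1(r)\le\alpha_2(r)$ for $r\le\nu$, hence $\alpha_2^{-1}\circ\alpha_1\le\mathrm{id}$ there — worth one sentence. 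Second, your chain of inequalities yields $\|x(k+1)-\eq\|\le\nu$, while membership in the \emph{open} ball is needed to close the induction; since $x_0$ lies in the open ball $\B_\eta(\eq)$ the inequality $\alpha_2(\|x_0-\eq\|)<\alpha_1(\nu)$ is in fact strict, so this is harmless but should be said.

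The one substantive difference from the cited proof concerns what ``asymptotically stable'' means. The paper points to \cite[Definitions 2.14 and 2.16]{Gruene2017a}, where asymptotic stability is defined via a uniform $\mathcal{KL}$-estimate $\|x(k)-\eq\|\le\beta(\|x_0-\eq\|,k)$. Your proof delivers Lyapunov stability plus attraction of each individual trajectory, which is the classical notion but not yet the $\mathcal{KL}$ bound: the telescoping argument alone does not show that the convergence is uniform in $x_0$. The reference closes this by combining (i) and (ii) into the trajectory-independent recursion $V_{k+1}\le V_k-\alpha_V(\alpha_2^{-1}(V_k))$ and invoking a comparison lemma to produce $\sigma\in\mathcal{KL}$ with $V(k,x(k))\le\sigma(V(0,x_0),k)$, whence $\beta(r,k)=\alpha_1^{-1}(\sigma(\alpha_2(r),k))$. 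Alternatively, your own estimates already give uniformity with a little extra work: the number of steps with $\|x(k)-\eq\|\ge\delta$ is bounded by $\alpha_2(\eta)/\alpha_V(\delta)$ independently of $x_0$, and once the trajectory enters $\B_{\alpha_2^{-1}(\alpha_1(\eps))}(\eq)$ your stability estimate (which is uniform in the initial time, since $V$ is a \emph{uniform} time-varying Lyapunov function) traps it in $\B_\eps(\eq)$ forever. Either route is routine, but as written the proposal stops one step short of the statement in the form the paper uses it.
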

	Recent research has established close connections between strict dissipativity and both the stability and near-optimality of closed-loop solutions of model predictive control schemes. We will also use this notion here. To this end, we introduce strict dissipativity for a single-objective optimal control problem, meaning we consider only one cost functional, see, for instance, \cite[Section 8.2]{Gruene2017a}.
	\begin{defn}[Strict dissipativity]\label{def: strDiss}
		A single-objective optimal control problem with stage cost $\li$ is strictly dissipative at an equilibrium $\equ$ if there exists a storage function $\lambda_i:\X \to\R$ bounded from below and satisfying $\lambda_i(\eq)=0$, and a function $\rho\in\K_\infty$ such that for all $(x,u)\in\X\times\U$ the inequality
		\[\li(x,u)-\li\equ+\lambda_i(x)-\lambda_i(f(x,u))\geq \rho(\norm{x-\eq})\]
		holds.
	\end{defn}
	
	\section{A new multiobjective MPC algorithm} \label{sec: algorithm}
	
	In this section, we introduce a multiobjective  model predictive control (MO MPC) scheme that relies on solving multiobjective optimal control problems of the type \eqref{MO MPC terminal}. 
	Since in the multiobjective case there are several "optimal" (efficient) solutions we have to adapt the "standard" MPC, e.g. see \cite{Gruene2017a}. 
	We build on the results in \cite{Stieler2018, Gruene2019a} which we recall at the appropriate places for completeness. We introduce a simplified version of the algorithm presented in \cite{Gruene2019a}, in which we allow for more general problems of the type \eqref{MO MPC terminal} than in \cite{Gruene2019a}, and we get rid of the restrictive assumption of the existence of stabilizing stage and terminal costs in all objective functions. Rather, we require strict dissipativity of the optimal control problem and the existence of a compatible terminal cost for only one stage cost. In particular, the existence of terminal costs that are jointly 
	compatible with all the stage costs is no longer required. For two objectives, the resulting algorithm bears similarities with the one in \cite{Zavala2015}, where one stabilizing and one economic objective were considered. In this sense, we merge the ideas presented in \cite{Gruene2019a,Zavala2015}, but at the same time we also extend them and, in addition to stability results, we will also provide performance estimates for all cost criteria $\Jni$, $i\in\{1,\dots,s\}$, which are not present in \cite{Zavala2015}. Throughout, we make the convention that the optimal control problem is strictly dissipative for the first stage cost $\ell_1$.
	
	
	\begin{ass}\label{ass: modTerminal}
		We assume that
		\begin{enumerate}[(i)]
			\item there is an equilibrium $\equ\in\X \times\U$ such that $f\equ=\eq$
			\item there is a storage function $\lambda_1:\X\to \R$ bounded from below with $\lambda_1(\eq)=0$ and a function  $\alpha_{\ell,1}\in\mathcal{K}_\infty$ such that
			\begin{equation}\label{eq: 1dissi}
				\ell_1(x,u)-\ell_1\equ+\lambda_1(x)-\lambda_1(f(x,u))\geq \alpha_{\ell,1}(\norm{x-\eq} + \norm{u-u^e})\quad \forall (x,u)\in\X\times\U
			\end{equation}
			\item $\li$ is continuous for all $i=1,\dots,s$
			\item $\eq\in\X_0$ and there exists a local feedback $\kappa:\X_0\to\U$ satisfying
			\begin{enumerate}
				\item $f(x,\kappa(x))\in\X_0$ for all $x\in\X_0$
				\item $\forall x\in \X_0$: $F_1(f(x,\kappa(x)))+\ell_1(x,\kappa(x))\leq F_1(x) +\ell_1\equ$
			\end{enumerate}
			\item the sets $\JN_\Pa(x)$ are externally stable for  $\JN(x)$  for each $x\in\X_N$, with $\X_N$ from \eqref{eq: X_N}.
		\end{enumerate}
	\end{ass}
	We note that item (iv) of this assumption ensures $\X_0\subseteq\X_N$ for all $N\ge 1$ and thus $\X_N\ne\emptyset$. Item (iv b) is usually referred to as compatibility of the terminal cost $F_1$.
	
	Assumption \ref{ass: modTerminal} states that we only require strict dissipativity for the optimal control problem with the first stage cost, while for the remaining $s-1$ stage costs we do not impose any conditions nor the existence of terminal costs. We remark that stabilizing, i.e., positive definite stage costs are a special case of stage costs for which strict dissipativity holds with $\lambda\equiv 0$. In the following, under Assumption \ref{ass: modTerminal}, we provide performance estimates for all cost criteria $\Jni$ and show asymptotic stability of the closed loop.
	
	Algorithm \ref{alg: modMOMPC terminal}, below, gives a variant of Algorithm 2 in \cite{Gruene2019a}, in which the constraint \eqref{ineq: mpcend1} is only imposed for the first cost criterion $J_1^N$, instead of for all cost criteria $\Jni$, $i=1,\ldots,s$, as in \cite{Gruene2019a}. By using the local feedback $\kappa$ as part of the comparison control sequence in Step (2), we enforce the trajectory to converge by means of the first objective function.
	\begin{algorithm}[H]
		\caption{MO MPC with terminal conditions and constraints on $J_1$} \label{alg: modMOMPC terminal}
		\begin{algorithmic}
			\vspace*{7pt}
			\Require{MPC horizon $N$, number of iterations $K$, $x_0\in\X$ and $\kappa$ from Assumption \ref{ass: modTerminal} (iv).}
			\For $k = 0,\dots,K$:
			\begin{itemize}
				\item[(0)] If $k=0$, set $x(0)=x_0$ and choose an efficient solution $\posNxn\in\U_\mathcal{P}^N(x(0))$ of \eqref{MO MPC terminal}. Go to (2).
				\item[(1)] If $k\geq 1$, choose an efficient solution $\posNxk$ of \eqref{MO MPC terminal} with $x_0 = x(k)$ so that the inequality
				\begin{equation}\label{ineq: mpcend1}
					J_1^N(x(k),\posNxk)\leq J_1^N(x(k),\ub_{x(k)})
				\end{equation}
				holds.
				\item[(2)] For $x:=x_{u^{\star}_{x(k)}}(N,x(k))$ set
				\[\ub_{x(k+1)}:=\left(u_{x(k)}^{\star}(1),\dots,u_{x(k)}^{\star}(N-1),\kappa(x)\right).\]
				\item[(3)] Apply the feedback $\mu^N(k,x(k)):=u_{x(k)}^{\star}(0)$, i.e., evaluate $x(k+1) = f(x(k),\mu^N(k,x(k)))$, set $k=k+1$ and go to (1).
			\end{itemize}
			\EndFor
			\Ensure{MPC closed-loop trajectory $x_\mu(k,x_0) := x(k)$, $k\in\N_0$}
		\end{algorithmic}
	\end{algorithm}
	\begin{figure}[h]
		\begin{center}
			\includegraphics[scale = 0.8]{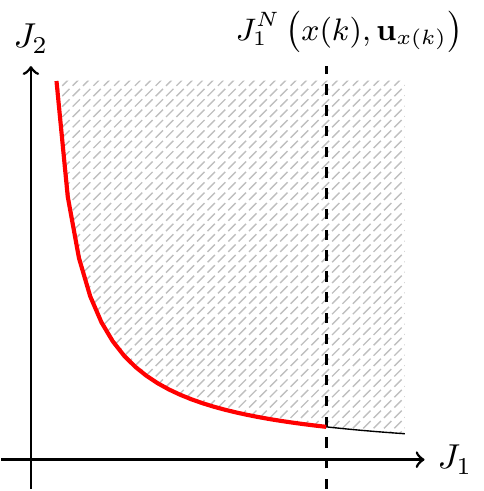}
			\caption{Visualization of step (1)}\label{fig: paretoJ1}
		\end{center}
	\end{figure}
	We have visualized the bound in \eqref{ineq: mpcend1} in Figure \ref{fig: paretoJ1}, where the dashed line represents the bound resulting from $\ub_{x(k)}^N$ and the red line the set of nondominated points of \eqref{MO MPC terminal} satisfying this bound. We like to stress that we do not constrain the second objective (or, for that matter, any other objectives that may appear in the formulation). We remark that for the subsequent theoretical results it is not important which nondominated point on the red part of the nondominated set in Figure \ref{fig: paretoJ1} we choose in step (1), as we provide performance bounds which hold for any feasible choice of $\posNxk$ and only depend on the choice of $\posNxn$ in step (0). However, this does not mean that the choices for $k\ge 1$ do not affect the MPC closed-loop solutions. In Section \ref{sec:numerics2}, we will thus investigate the development of the nondominated sets and the effect of different selection rules for the efficient point via numerical simulations. Before we do this, we show in the subsequent sections that this MO MPC algorithm has the certain desirable properties: feasibility, convergence and performance results. We start with feasibility and convergence of the closed-loop trajectory and with performance results for the cost criterion $J_1$. We note that the convergence of the closed-loop and performance results were already shown in \cite{Stieler2018, Gruene2019a} for the algorithm with stronger constraints.
	
	\section{Performance Results}\label{sec: performance}
	
	Within this section we examine the properties of the closed loop trajectory which results from Algorithm \ref{alg: modMOMPC terminal}. We obtain convergence for the trajectory as well as performance estimates for all cost criteria. 
	
	\subsection{Performance estimate of $J_1^N$}
	In this subsection we state a performance theorem for the first cost criterion $J_1^N$, which guarantees a bounded performance of the feedback $\mu^N$ defined in Algorithm \ref{alg: modMOMPC terminal}. To this end, we first show a performance estimate for the so-called rotated cost function $\tJ_1^N$ and conclude then the trajectory convergence as well as the main performance theorem of this section.
	
	First, we recall the classical definitions of rotated costs.
	\begin{defn}
		For $x\in\X$ and $u\in\U$ we define the rotated stage cost
		\begin{equation}
			\tell_1(x,u):= \ell_1(x,u)-\ell_1\equ+\lambda_1(x)-\lambda_1(f(x,u))
		\end{equation}
		with equilibrium $\eq$ and storage cost $\lambda_1$ from Assumption \ref{ass: modTerminal}, and rotated terminal cost
		\begin{equation}
			\tF_1(x):=F_1(x)+\lambda_1(x).
		\end{equation}
		The corresponding cost functional is given by
		\begin{equation}
			\tJ_1^N(x_0,\ub):=\sum_{k=0}^{N-1}\tell_1(x_{\ub}(k,x_0),u(k))+\tF_1(x_{\ub}(N,x_0)).
		\end{equation}
	\end{defn}
	We remark that, under Assumption \ref{ass: modTerminal}, for all $x\in\X$, $u\in\U$ the rotated stage cost $\tell_1$ is bounded from below by $\alpha_{\ell, 1}$ with $\alpha_{\ell, 1}$  from Assumption \ref{ass: modTerminal}(ii) and, thus, $\tell(x,u)\geq 0$. We emphasize that for implementing the MPC algorithms from this paper the rotated stage cost $\tell_1$ and the storage function $\lambda_1$ do not need to be known, as we always optimize the original stage cost $\ell_1$, whereas $\tell_1$ is only needed as an auxiliary cost for the subsequent analysis.
	
	Next, we use the definitions above to derive some relations between rotated and classical cost. It is easy to check that the relation 
	\begin{equation}\label{eq: rel_rot_norm}
		\tJ_1^N(x_0,\ub)=J_1^N(x_0,\ub)-N\ell_1\equ+\lambda_1(x_0),\quad \forall\,\ub\in\U^N(x_0),
	\end{equation}
	and the equalities $\tell_1\equ=0$ and $\tF_1(\eq)=0$ hold. Moreover, the inequality
	\begin{equation}\label{eq: F1decay}
		\tF_1(f(x,\kappa(x))\leq \tF_1(x)-\tell_1(x,\kappa(x))
	\end{equation}
	holds for all $x\in\X_N$ and $\kappa$ from Assumption \ref{ass: modTerminal} (iv).
	
	Using the definitions and relations above we can state a performance estimate for the rotated stage cost $\tell_1$. We will use this result to conclude the convergence of the closed-loop trajectory. We refer again to \cite{Stieler2018}, especially to Section 5.1.2. where similar results and proofs are provided.
	\begin{lem}[Non-averaged rotated performance for $\ell_1$]\label{lem: rotPerf}
		Let Assumptions \ref{ass: modTerminal} hold and $x_0\in\X_N$. Then, it holds
		\[\tJ_1^\infty(x_0,\mu^N):=\lim_{K\to\infty}\sum_{k=0}^{K-1}\tell_1(x_\mu(k,x_0),\mu^N(k,x_\mu(k,x_0)))\leq \tJ_1^N(x_0,\posN)\]
		with $\mu^N$ the MPC feedback defined in Algorithm \ref{alg: modMOMPC terminal}.
	\end{lem}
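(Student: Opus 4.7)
The plan is to establish a Lyapunov-type decrease for $\tJ_1^N(x(k),\posNxk)$ along the MPC closed loop and then telescope. The shift-and-append control $\ub_{x(k+1)}$ produced in step (2) of Algorithm \ref{alg: modMOMPC terminal} is the standard comparison sequence; Assumption \ref{ass: modTerminal}(iv)(a) makes it admissible, i.e., $\ub_{x(k+1)}\in\U^N(x(k+1))$.

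First I would expand $\tJ_1^N(x(k+1),\ub_{x(k+1)})$. The state trajectory under $\ub_{x(k+1)}$ from $x(k+1)=f(x(k),\mu^N(k,x(k)))$ coincides with the trajectory under $\posNxk$ from $x(k)$, shifted by one, for the first $N-1$ steps. Writing out the sum, invoking the rotated compatibility inequality \eqref{eq: F1decay} at the terminal step with $x=x_{\posNxk}(N,x(k))\in\X_0$, and recombining the telescoped terminal cost with the interior stage costs yields
\[
\tJ_1^N(x(k+1),\ub_{x(k+1)})\leq \tJ_1^N(x(k),\posNxk)-\tell_1(x(k),\mu^N(k,x(k))).
\]
Next I would invoke the inequality \eqref{ineq: mpcend1} enforced in step (1) together with the relation \eqref{eq: rel_rot_norm}: since both $\posNxkk$ and $\ub_{x(k+1)}$ are evaluated at the same initial value $x(k+1)$, the constant correction $-N\ell_1\equ+\lambda_1(x(k+1))$ cancels, so $J_1^N$ and $\tJ_1^N$ are ordered identically. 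Hence $\tJ_1^N(x(k+1),\posNxkk)\leq \tJ_1^N(x(k+1),\ub_{x(k+1)})$, and chaining with the previous display gives the Lyapunov-type inequality
\[
\tJ_1^N(x(k+1),\posNxkk)\leq \tJ_1^N(x(k),\posNxk)-\tell_1(x(k),\mu^N(k,x(k))).
\]

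Finally I would telescope from $k=0$ to $k=K-1$ to obtain
\[
\sum_{k=0}^{K-1}\tell_1(x_\mu(k,x_0),\mu^N(k,x_\mu(k,x_0)))\leq \tJ_1^N(x_0,\posN)-\tJ_1^N(x(K),\posNxK),
\]
and let $K\to\infty$. Since $\tell_1\geq 0$ (Assumption \ref{ass: modTerminal}(ii) via $\alpha_{\ell,1}$), the partial sums on the left are monotonically nondecreasing, so it remains to justify dropping the residual $\tJ_1^N(x(K),\posNxK)$. The main technical obstacle is showing $\tJ_1^N(x(K),\posNxK)\geq 0$, which reduces to $\tF_1\geq 0$ on $\X_0$. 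I would handle this by iterating \eqref{eq: F1decay} along the $\kappa$-driven trajectory inside $\X_0$: the resulting trajectory converges to $\eq$ (by strict dissipativity of $\ell_1$ applied to this closed-loop system), and $\tF_1(\eq)=F_1(\eq)+\lambda_1(\eq)=0$ together with monotone decrease under iteration forces $\tF_1\geq 0$ on $\X_0$. After dropping the residual and passing to the limit, the claimed bound follows.
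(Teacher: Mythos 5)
Your proof is correct and follows essentially the same route as the paper's: the shifted-and-appended comparison sequence from step (2), the compatibility inequality \eqref{eq: F1decay} at the terminal step, the constraint \eqref{ineq: mpcend1}, and telescoping; the only cosmetic difference is that you carry out the bookkeeping directly in the rotated costs, whereas the paper works with $J_1^N$ and converts via \eqref{eq: rel_rot_norm} at the end. Your explicit justification that the residual term $\tJ_1^N(x(K),\posNxK)$ is nonnegative (via $\tF_1\ge 0$ on $\X_0$, obtained by iterating \eqref{eq: F1decay} along the $\kappa$-trajectory) addresses a point the paper leaves implicit in this lemma and only spells out later, in the proof of Theorem \ref{thm: as_stab}.
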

	\begin{proof}
		The existence of the efficient solutions in step (0) and (1) is concluded from Assumption \ref{ass: modTerminal} (v) -- the external stability of $\JN_\Pa(x)$. Feasibility of $\ub_{x(k+1,x_0)}$ in (2) follows from Assumption \ref{ass: modTerminal} (iv). Recursive feasibility of $\X$, see \cite[Theorem 3.5]{Gruene2017a} for a definition, is an immediate consequence.
		
		Further, for each $K\in\N$ it holds, with $\posNxk$ denoting the control from Algorithm \ref{alg: modMOMPC terminal},
		\begin{align*}
			\sum_{k=0}^{K-1}&\tell_1(x_\mu(k,x_0),\mu^N(k,x_\mu(k,x_0)))\\
			=\sum_{k=0}^{K-1}&\ell_1(x_\mu(k,x_0),\mu^N(k,x_\mu(k,x_0)))-\ell_1\equ+\lambda_1(x_\mu(k,x_0))-\lambda_1(x_\mu(k+1,x_0))\\
			=\sum_{k=0}^{K-1}&\left(J_1^N(x_{\posNxk}(k,x_0),\posNxk)-J_1^{N-1}(x_{\posNxk}(k+1,x_0),\posNxk(\cdot +1))\right)\\
			&-K\ell_1\equ+\lambda_1(x_0)-\lambda_1(x_\mu(K,x_0)),
		\end{align*}
		in which the first equality follows from the definition of $\tell_1$ and the second equality holds by means of the terminal condition.
		Note that we use the notation 
		$\posNxk(\cdot +1):=(u^{\star}(1),u^{\star}(2),\dots,u^{\star}(N-1))$
		Further, because of step (1) in Algorithm \ref{alg: modMOMPC terminal} and dissipativity we can estimate
		\begin{align*}
			&
			\!\begin{alignedat}{2}
				&\sum_{k=0}^{K-1}&&\left(J_1^N(x_{\posNxk}(k,x_0),\posNxk)-J_1^{N-1}(x_{\posNxk}(k+1,x_0),\posNxk(\cdot +1))\right)\\
				&	 &&-K\ell_1\equ+\lambda_1(x_0)-\lambda_1(x_\mu(K,x_0))\\
				\leq&\sum_{k=0}^{K-1}&&\left(J_1^N(x_{\posNxk}(k,x_0),\posNxk)-J_1^N(x_{\ub_{x(k)}}(k+1,x_0),\ub_{x(k+1,x_0)})+\ell_1\equ\right)\\
				&	 &&-K\ell_1\equ+\lambda_1(x_0)-\lambda_1(x_\mu(K,x_0))
			\end{alignedat}\\
			&	\leq  J_1^N(x_0,\posN)-J_1^N(x_{\ub_{x(K)}}(K,x_0),\ub_{x(K,x_0)})+\lambda_1(x_0)-\lambda_1(x(K,x_0))\\
			&	=\tJ_1^N(x_0,\posN)-\tJ_1^N(x_{\ub_{x(K)}}(K,x_0),\ub_{x(K,x_0)})\\
			&	\leq \tJ_1^N(x_0,\posN).
		\end{align*}
		Finally, letting $K$ tend to infinity and using that $\tell_1(x,u) \ge 0$, for all $x\in\X$ and $u\in\U$ yields the statement.
	\end{proof}
	
	\begin{cor}[Trajectory convergence]\label{cor: conv, with, mod}
		Consider \eqref{MO MPC terminal}. Let Assumption \ref{ass: modTerminal} hold. Then the closed-loop trajectory $x(\cdot)=x_\mu(\cdot,x_0)$ driven by the feedback $\mu^N$ from Algorithm \ref{alg: modMOMPC terminal} converges to the equilibrium $\eq$ and $\tell_1(x(k),\mu^N(k,x(k)))$ converges to $0$ as $k\to\infty$.
	\end{cor}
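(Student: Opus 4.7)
The plan is to deduce both claims of the corollary directly from Lemma \ref{lem: rotPerf} together with the strict dissipativity bound in Assumption \ref{ass: modTerminal}(ii). The lemma gives
\[
\sum_{k=0}^{\infty}\tell_1\bigl(x_\mu(k,x_0),\mu^N(k,x_\mu(k,x_0))\bigr) \;\le\; \tJ_1^N(x_0,\posN) \;<\;\infty,
\]
because the right-hand side is finite for $x_0\in\X_N$ (the efficient solution $\posN$ exists by Assumption \ref{ass: modTerminal}(v) and $\tJ_1^N$ is defined in terms of continuous functions evaluated on a finite horizon).

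First I would observe that every summand is non-negative: by definition of $\tell_1$ and \eqref{eq: 1dissi}, one has $\tell_1(x,u)\ge \alpha_{\ell,1}(\|x-\eq\|+\|u-u^e\|)\ge 0$ for all $(x,u)\in\X\times\U$. A convergent series of non-negative terms has its terms tending to zero, hence
\[
\tell_1\bigl(x(k),\mu^N(k,x(k))\bigr)\;\longrightarrow\;0 \quad\text{as }k\to\infty,
\]
which is the second assertion of the corollary.

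For the first assertion, I use the same lower bound the other way round: since
\[
0\;\le\;\alpha_{\ell,1}\bigl(\|x(k)-\eq\|+\|\mu^N(k,x(k))-u^e\|\bigr)\;\le\;\tell_1\bigl(x(k),\mu^N(k,x(k))\bigr)
\]
and the right-hand side tends to $0$, continuity of $\alpha_{\ell,1}$ at $0$ together with the fact that $\alpha_{\ell,1}\in\mathcal{K}_\infty$ (hence $\alpha_{\ell,1}^{-1}$ exists and is continuous with $\alpha_{\ell,1}^{-1}(0)=0$) forces $\|x(k)-\eq\|+\|\mu^N(k,x(k))-u^e\|\to 0$, and in particular $x_\mu(k,x_0)=x(k)\to \eq$ as $k\to\infty$.

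There is no real obstacle here; the proof is essentially a standard telescoping/summability argument once Lemma \ref{lem: rotPerf} is in hand. The only point that deserves a line of care is ensuring that the lower bound $\alpha_{\ell,1}(\|x-\eq\|+\|u-u^e\|)$ may legitimately be used for the closed-loop pair $(x(k),\mu^N(k,x(k)))$, which holds since recursive feasibility (established in the proof of Lemma \ref{lem: rotPerf}) guarantees $(x(k),\mu^N(k,x(k)))\in\X\times\U$ for every $k\in\N_0$.
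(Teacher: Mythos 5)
Your proposal is correct and follows essentially the same route as the paper's proof: both deduce convergence of the nonnegative series $\sum_k \tell_1(x(k),\mu^N(k,x(k)))$ from Lemma \ref{lem: rotPerf}, conclude that the summands tend to zero, and then invoke the strict dissipativity lower bound with $\alpha_{\ell,1}\in\K_\infty$ to obtain $x(k)\to\eq$. Your handling of the last step (arguing via the inverse of $\alpha_{\ell,1}$ rather than interchanging the limit with $\alpha_{\ell,1}$) is, if anything, slightly more careful than the paper's.
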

	\begin{proof}
		We follow the proof of Corollary 4.9 in \cite{Stieler2018}:\\
		From Theorem \ref{lem: rotPerf} it follows that the sum $\sum_{k=0}^\infty \tell_1(x(k),\mu^N(k,x(k)))$ converges and, thus, the sequence satisfies $\tell_1(x(k),\mu^N(k,x(k)))\to 0$ as $k\to\infty$. Hence, since the optimal control problem with stage cost $\tell_1$ is strictly dissipative and $\alpha_{\ell,1}\in\K$, we get
		\[0=\lim_{k\to\infty}\alpha_{\ell,1}(\norm{x(k)-\eq})=\alpha_{\ell,1}\left(\lim_{k\to\infty}\norm{x(k)-\eq}\right),\]
		which is equivalent to $\lim_{k\to\infty}\norm{x(k)-\eq}=0$.
	\end{proof}
We now carry over the estimates for $\tJ_1^\infty$ to $J_1^\infty$. To this end and for the subsequent stability analysis in Section \ref{sec: stability}, we need an additional assumption.

\begin{ass}\label{ass: cont1}
	There exist $\gamma_{F_1}\in\K_\infty$ and $\gamma_{\lambda_1}\in\K_\infty$ such that the following holds.
	\begin{enumerate}[(i)]
		\item For all $x\in\X_0$ it holds that 
		\[|F_1(x)-F_1(\eq)|\leq \gamma_{F_1}(\norm{x-\eq})\]
		and it yields that $F_1(\eq)=0$.
		\item For all $x\in\X$ it holds that 
		\[|\lambda_1(x)-\lambda_1(\eq)|\leq \gamma_{\lambda_1}(\norm{x-\eq})\]
		with $\lambda_1$ from Assumption \ref{ass: modTerminal}.
	\end{enumerate}
\end{ass}

Using Part (ii)	of this assumption, we can show an infinite horizon performance result on $J_1$ similar to the one in \cite{Gruene2019a, Stieler2018}.

\begin{thm}[Performance estimate for $J_1$]\label{thm: perf, with, 1}
	Consider the multiobjective optimal control problem  with terminal conditions \eqref{MO MPC terminal}. Let Assumptions \ref{ass: modTerminal} and \ref{ass: cont1} (ii) hold and assume $\ell_1\equ =0$. Then, the MPC feedback $\mu^N:\N_0\times\X\to \U$ defined in Algorithm \ref{alg: modMOMPC terminal} renders the set $\X$ forward invariant and has the infinite-horizon closed-loop performance
	\begin{equation}
		J_1^\infty(x_0,\mu^N):=\sum_{k=0}^\infty \ell_1(x_\mu(k,x_0),\mu^N(k,x_\mu(k,x_0)))\leq J_1^N(x_0,\posN)
	\end{equation}
	in which $\posN$ denotes the efficient solution of step (0) in Algorithm \ref{alg: modMOMPC terminal}.
\end{thm}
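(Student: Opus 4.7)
The plan is to derive the $J_1$-performance bound from the rotated performance bound of Lemma \ref{lem: rotPerf} by undoing the rotation in the limit. Forward invariance of $\X$ will follow from recursive feasibility, which was already observed in the proof of Lemma \ref{lem: rotPerf} as a consequence of Assumption \ref{ass: modTerminal}(iv) and (v): the terminal constraint together with the local feedback $\kappa$ allows one to shift the previous efficient tail by one step and append $\kappa(\cdot)$, producing an admissible candidate in $\U^N(x(k+1))$, so in particular $x_\mu(k,x_0)\in\X$ for all $k\in\N_0$.

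For the infinite-horizon performance, I would first sum the telescoping identity defining $\tell_1$ along the closed loop for a finite horizon $K$, which (using $\ell_1\equ=0$) yields
\begin{equation*}
\sum_{k=0}^{K-1}\tell_1(x_\mu(k,x_0),\mu^N(k,x_\mu(k,x_0)))=\sum_{k=0}^{K-1}\ell_1(x_\mu(k,x_0),\mu^N(k,x_\mu(k,x_0)))+\lambda_1(x_0)-\lambda_1(x_\mu(K,x_0)).
\end{equation*}
Next I would pass to the limit $K\to\infty$. The left-hand side converges to $\tJ_1^\infty(x_0,\mu^N)$ and, by Lemma \ref{lem: rotPerf}, is bounded by $\tJ_1^N(x_0,\posN)$. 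For the right-hand side I need $\lambda_1(x_\mu(K,x_0))\to 0$: Corollary \ref{cor: conv, with, mod} provides $x_\mu(K,x_0)\to\eq$, and Assumption \ref{ass: cont1}(ii) together with $\lambda_1(\eq)=0$ from Assumption \ref{ass: modTerminal}(ii) gives $|\lambda_1(x_\mu(K,x_0))|\le\gamma_{\lambda_1}(\|x_\mu(K,x_0)-\eq\|)\to 0$. Hence the limit on the right exists and equals $J_1^\infty(x_0,\mu^N)+\lambda_1(x_0)$.

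Combining the two steps gives
\begin{equation*}
J_1^\infty(x_0,\mu^N)+\lambda_1(x_0)=\tJ_1^\infty(x_0,\mu^N)\le\tJ_1^N(x_0,\posN)=J_1^N(x_0,\posN)+\lambda_1(x_0),
\end{equation*}
where the final equality uses \eqref{eq: rel_rot_norm} with $\ell_1\equ=0$. Subtracting $\lambda_1(x_0)$ on both sides (which is finite, since $\lambda_1$ is bounded from below by Assumption \ref{ass: modTerminal}(ii) and bounded from above at $x_0\in\X$ via Assumption \ref{ass: cont1}(ii)) yields the claimed inequality.

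The only nontrivial point is the justification that $\lambda_1(x_\mu(K,x_0))\to 0$, which is precisely where Assumption \ref{ass: cont1}(ii) is needed: Corollary \ref{cor: conv, with, mod} alone only guarantees state convergence, while the storage function $\lambda_1$ a priori need not even be continuous. The $\K_\infty$-continuity bound at the equilibrium closes this gap and lets us turn a telescoped identity into a clean $J_1$-performance estimate.
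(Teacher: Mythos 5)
Your proposal is correct and follows essentially the same route as the paper's proof: both rest on the telescoping identity linking $\ell_1$ and $\tell_1$, the bound from Lemma \ref{lem: rotPerf}, relation \eqref{eq: rel_rot_norm}, and the combination of Corollary \ref{cor: conv, with, mod} with Assumption \ref{ass: cont1}(ii) to conclude $\lambda_1(x_\mu(K,x_0))\to 0$. The only (immaterial) difference is that the paper bounds the finite partial sum of $\ell_1$ by $J_1^N(x_0,\posN)+\lambda_1(x_\mu(K,x_0))$ before letting $K\to\infty$, whereas you pass to the limit on the rotated sum first and then undo the rotation.
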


\begin{proof}
	As in the proof of Lemma \ref{lem: rotPerf} the existence of the efficient solutions in step (0) and (1) in Algorithm \ref{alg: modMOMPC terminal} is, again, concluded from the external stability of $\JN_\Pa(x)$. Feasibility of $\ub_{x(k+1,x_0)}$ in (2) follows from Assumption \ref{ass: modTerminal} (iv). Forward invariance of $\X$ is an immediate consequence.
	
	Using the definition of $\tell_1$, the estimate from the proof of Lemma \ref{lem: rotPerf}, the relation \eqref{eq: rel_rot_norm}, and $\ell(x^e,u^e)=0$, it holds that 
	\begin{eqnarray*} 
		\sum_{k=0}^{K-1}\ell_1(x_\mu(k,x_0),\mu^N(k,x_\mu(k,x_0))) & = & - \lambda_1(x_0) + \sum_{k=0}^{K-1}\tell_1(x_\mu(k,x_0),\mu^N(k,x_\mu(k,x_0))) + \lambda_1(x_\mu(K,x_0))\\
		& \le &  -\lambda_1(x_0) +\widetilde J_{1}^N(x_0,\posN) + \lambda_1(x_\mu(K,x_0))\\[2ex]
		& = & J_1^N(x_0,\posN) + \lambda_1(x_\mu(K,x_0)).
	\end{eqnarray*}
	Now Assumption \ref{ass: cont1} (ii) together with the storage function $\lambda_1$ from Definition \ref{def: strDiss} with $\lambda_1(x^e)=0$ and the fact that by Corollary \ref{cor: conv, with, mod} we have $x_\mu(K,x_0)\to x^e$ implies that $\lambda_1(x_\mu(K,x_0))\to 0$ as $K\to\infty$. This shows the assertion.
\end{proof}

\begin{rem} \label{rem: averJ1}
	The proof of Theorem \ref{thm: perf, with, 1} also implies the averaged performance estimate
	\[ \limsup_{K\to\infty} \frac 1K \sum_{k=0}^{K-1} \ell_1(x_\mu(k,x_0),\mu^N(k, x_\mu(k,x_0)))= 0.\]
	In case $\ell_1(x^e,u^e) \ne 0$, this inequality holds for the shifted cost $\hat \ell_1(x,u) = \ell_1(x,u) - \ell_1(x^e,u^e)$. This implies 
	\begin{eqnarray*}&&\limsup_{K\to\infty} \frac 1K \sum_{k=0}^{K-1} \ell_1(x_\mu(k,x_0),\mu^N(k,x_\mu(k,x_0)))\\ && \qquad =  
		\limsup_{K\to\infty} \frac 1K \sum_{k=0}^{K-1} \hat \ell_1(x_\mu(k,x_0),\mu^N(k,x_\mu(k,x_0))) + \ell_1(x^e,u^e) \\
		&& \qquad \le  \ell_1(x^e,u^e).\end{eqnarray*}
	Thus, we obtain an averaged performance estimate also in the case $\ell_1(x^e,u^e)\ne 0$.
\end{rem}
In the proof above we argue that we get feasibility because of the external stability. We remark that Lemma \ref{lem: externStab} provides conditions such that external stability can be guaranteed.

\subsection{Averaged Performance Estimates for $J_i$}

Besides the performance of $J_1$ we are also interested in performance estimates for $J_i$, $i\in\{2,\dots,s\}$. Thus, we first consider the averaged performance of $J_i$, $i\in\{2,\dots,s\}$, by using the results of the previous section. Hence, the continuity of the stage costs $\li$, $i\in\{2,\dots,s\}$, and the trajectory convergence deliver the averaged performance estimate.

\begin{lem} \label{lem: lbound} For each $i=2,\ldots,s$ there is $\omega_i\in\K_\infty$ such that $|\ell_i(x,u)-\ell_i(x^e,u^e)| \le \omega_i(\tell_1(x,u))$.
\end{lem}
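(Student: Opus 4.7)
The plan is to combine continuity of $\ell_i$ (Assumption \ref{ass: modTerminal} (iii)) with the coercivity of the rotated stage cost $\tell_1$ in $\|x-x^e\|+\|u-u^e\|$ guaranteed by Assumption \ref{ass: modTerminal} (ii). More specifically, I would first express continuity of $\ell_i$ at $(x^e,u^e)$ as a $\K_\infty$ bound in terms of the state--control deviation $\|x-x^e\|+\|u-u^e\|$, and then apply the dissipativity inequality to replace that deviation by $\tell_1(x,u)$.

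For the first step I would work on $\X\times\U$, which in the standing MPC setup of the paper is taken to be compact (or, more generally, one can restrict to sublevel sets of $\tell_1$, which are bounded thanks to Assumption \ref{ass: modTerminal} (ii) together with closedness of $\X$ and $\U$). On such a set $\ell_i$ is uniformly continuous, so for each $i\in\{2,\dots,s\}$ there exists a nondecreasing function $\sigma_i:\Rp\to\Rp$ with $\sigma_i(0)=0$ and $\sigma_i(r)\to 0$ as $r\to 0^+$ satisfying
\[
|\ell_i(x,u)-\ell_i(x^e,u^e)|\le \sigma_i(\|x-x^e\|+\|u-u^e\|).
\]
By a standard regularization (passing to a continuous, strictly increasing, unbounded majorant) we may assume $\sigma_i\in\K_\infty$ without loss of generality.

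For the second step, Assumption \ref{ass: modTerminal} (ii) yields
\[
\alpha_{\ell,1}(\|x-x^e\|+\|u-u^e\|)\le \tell_1(x,u),
\]
so that $\|x-x^e\|+\|u-u^e\|\le \alpha_{\ell,1}^{-1}(\tell_1(x,u))$, with $\alpha_{\ell,1}^{-1}\in\K_\infty$. Monotonicity of $\sigma_i$ then delivers
\[
|\ell_i(x,u)-\ell_i(x^e,u^e)|\le \sigma_i\bigl(\alpha_{\ell,1}^{-1}(\tell_1(x,u))\bigr),
\]
and setting $\omega_i:=\sigma_i\circ\alpha_{\ell,1}^{-1}$ produces the claimed bound, since the composition of two $\K_\infty$ functions is again in $\K_\infty$.

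The main technical obstacle is the construction of the modulus $\sigma_i$: continuity of $\ell_i$ by itself only yields a local, pointwise modulus, and upgrading this to a globally defined $\K_\infty$ bound requires either compactness of $\X\times\U$ or a boundedness argument exploiting the coercivity of $\tell_1$. Under the paper's standard MPC assumptions this step is routine, but it is the point at which some mild structural hypothesis on $\X\times\U$ must tacitly enter.
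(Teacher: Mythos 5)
Your proof is correct and follows essentially the same route as the paper's: bound $|\ell_i(x,u)-\ell_i(x^e,u^e)|$ by a $\K_\infty$ function of $\|x-x^e\|+\|u-u^e\|$ and then compose with $\alpha_{\ell,1}^{-1}$ via the dissipativity lower bound on $\tell_1$. The only difference is that you make explicit the compactness/boundedness argument needed to upgrade mere continuity of $\ell_i$ to a global $\K_\infty$ modulus, a step the paper's one-line proof takes for granted.
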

\begin{proof} Since $\ell_i$ is continuous, there is $\tilde\omega_i\in\K_\infty$ such that $|\ell_i(x,u)-\ell_i(x^e,u^e)| \le \tilde\omega_i(\norm{x-x^e} + \norm{u-u^e})$. Since $\tell_1(x,u) \ge \alpha_{\ell,1}(\norm{x-x^e} + \norm{u-u^e})$, the assertion follows with $\omega_i = \tilde\omega_i\circ \alpha_{\ell,1}^{-1}$.
\end{proof}

\begin{thm}\label{thm: avPerfJi}
	Consider the multiobjective optimal control problem with terminal\\ conditions~\eqref{MO MPC terminal}. Let Assumptions \ref{ass: modTerminal} and \ref{ass: cont1} hold. 
	
	Then, the MPC-feedback $\mu^N:\N_0\times\X\to\U$ defined in Algorithm \ref{alg: modMOMPC terminal} has the infinite-horizon averaged closed-loop performance
	\[\bar{J}_i^\infty(x_0,\mu^N):= \limsup_{K\to\infty}\dfrac 1 K\sum_{k=0}^{K-1}\li(x_\mu(k,x_0),\mu^N(k,x_\mu(k,x_0)))\leq \li\equ\]
	for all objectives $i\in\{2,\dots,s\}$.
\end{thm}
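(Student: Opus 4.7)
The plan is to reduce the averaged performance of $\ell_i$ to a statement about $\tell_1$, for which we already control both the summability (via Lemma~\ref{lem: rotPerf}) and the pointwise decay (via Corollary~\ref{cor: conv, with, mod}). The key fact is that Lemma~\ref{lem: lbound} provides a comparison of the form $\ell_i(x,u)-\ell_i(x^e,u^e)\le\omega_i(\tell_1(x,u))$, which will let me transfer the known convergence behavior of the rotated first cost to each remaining stage cost.

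First I would rewrite each summand as $\ell_i(x_\mu(k,x_0),\mu^N(k,x_\mu(k,x_0)))=\ell_i(x^e,u^e)+\bigl(\ell_i(x_\mu(k,x_0),\mu^N(k,x_\mu(k,x_0)))-\ell_i(x^e,u^e)\bigr)$, apply Lemma~\ref{lem: lbound} to the bracketed term, and then sum from $k=0$ to $K-1$ and divide by $K$, yielding
\[
\frac{1}{K}\sum_{k=0}^{K-1}\ell_i(x_\mu(k,x_0),\mu^N(k,x_\mu(k,x_0)))\le \ell_i(x^e,u^e)+\frac{1}{K}\sum_{k=0}^{K-1}\omega_i\bigl(\tell_1(x_\mu(k,x_0),\mu^N(k,x_\mu(k,x_0)))\bigr).
\]
Next I would argue that the second term vanishes as $K\to\infty$. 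By Corollary~\ref{cor: conv, with, mod} the sequence $a_k:=\tell_1(x_\mu(k,x_0),\mu^N(k,x_\mu(k,x_0)))$ converges to $0$; since $\omega_i\in\K_\infty$ is continuous with $\omega_i(0)=0$, the sequence $\omega_i(a_k)$ also converges to $0$. The standard Cesàro-mean fact that $b_k\to 0$ implies $\frac{1}{K}\sum_{k=0}^{K-1}b_k\to 0$ then finishes the estimate and yields $\limsup_{K\to\infty}\frac{1}{K}\sum_{k=0}^{K-1}\ell_i(\cdot)\le \ell_i(x^e,u^e)$.

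I do not expect a genuine obstacle here: feasibility is already guaranteed under Assumption~\ref{ass: modTerminal} as in the proof of Lemma~\ref{lem: rotPerf}, and Assumption~\ref{ass: cont1} is available but not needed beyond what is used to invoke Corollary~\ref{cor: conv, with, mod}. The only mild subtlety is that $\omega_i$ need not be bounded, so I should insist on the pointwise $\to 0$ behavior of $a_k$ rather than any uniform $L^\infty$ bound, and invoke the Cesàro statement in the elementary form that does not require boundedness of the whole sequence. With this in mind the argument is a few lines, and the proof will read as a direct corollary of the preceding lemma and the trajectory-convergence corollary.
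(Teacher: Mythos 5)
Your proposal is correct and follows essentially the same route as the paper: both reduce the claim to $\ell_i-\ell_i(x^e,u^e)\le\omega_i(\tell_1)$ via Lemma~\ref{lem: lbound} and the convergence $\tell_1\to 0$ from Corollary~\ref{cor: conv, with, mod}. The only difference is cosmetic --- the paper writes out the Ces\`aro-mean argument explicitly (splitting the sum at an index $\tilde k$ beyond which the error term is below $\tilde\eps$ and bounding the initial segment by $C/K$), whereas you invoke it as a standard fact, correctly noting that pointwise decay suffices and no uniform bound on $\omega_i$ is needed.
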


\begin{proof}
	The existence of the efficient solutions and feasibility is ensured by Lemma \ref{lem: rotPerf} and Theorem \ref{thm: perf, with, 1}. Further, from Corollary \ref{cor: conv, with, mod} and Lemma \ref{lem: lbound} it follows that there exists $M\in\N_0$ such that for all $k\geq M$ the relation $\li(x_{\posNxk}(k,x_0),\posNxk(0))=\li\equ +\eps(k)$, $i\in\{2,\dots,s\}$, with $\eps(k)\to 0$ as $k\to\infty$, holds. Thus, given any arbitrary $\tilde\eps>0$, there exists $\tilde k\in\N_0$, $\tilde K\ge M$, such that for $k\geq \tilde k$ the error term satisfies $\eps(k)<\tilde\eps$. Thus, the sequence $(\eps(k))_{k\in\N}$ tends to $0$ for $k\to\infty$.
	
	Then, for each fixed, but arbitrary $K\in\N$ with $K>\tilde k$
	\begin{align*}
		&\dfrac 1 K \sum_{k=0}^{K-1}\li(x_\mu(k,x_0),\mu^N(k,x_\mu(k,x_0)))\\
		=&\dfrac 1 K \left(\sum_{k=0}^{\tilde k-1}\li(x_{\posNxk}(k,x_0),\posNxk(0))+\sum_{k=\tilde k}^{K-1}\li(x_{\posNxk}(k,x_0),\posNxk(0))\right)\\
		\leq & \dfrac C K + \dfrac 1 K \sum_{k=\tilde k}^{K-1}\li\equ +\underbrace{\eps(k)}_{\le \tilde \eps}\\
		\leq & \dfrac C K + \left(1-\dfrac{\tilde k}K\right)\li\equ +\left(1-\frac {\tilde k} K \right)\tilde \eps,
	\end{align*}
	where $C := \sum_{k=0}^{\widetilde k-1}\li(x_{\posNxk}(k,x_0),\posNxk(0))$ is independent of $K$ and $\posNxk$ denotes the control from Algorithm \ref{alg: modMOMPC terminal}. Letting $K\to\infty$, this implies
	\[  \bar{J}_i^\infty(x_0,\mu^N) \le \ell_i(x^e,u^e) + \tilde\eps\]
	and since $\tilde\eps>0$ was arbitrary, this shows the assertion.
\end{proof}

\section{Stability}\label{sec: stability}
In this section, we show that Algorithm \ref{alg: modMOMPC terminal} has a stability property. Therefore, we use the assumptions, results and calculations from the previous sections to formulate the following theorem. To this end, we adapt the classical stability result from the single-objective case.
\begin{thm}[Asymptotic Stability]\label{thm: as_stab}
	Consider the multiobjective optimal control problem with terminal conditions \eqref{MO MPC terminal}, which we assume to be strictly dissipative for stage cost $\ell_1$ at the equilibrium $\equ$. Let Assumption \ref{ass: modTerminal} (iv) and (v) and Assumption \ref{ass: cont1} (i) and (ii) be satisfied.
	Let $\gamma_J\in\K_\infty$ and choose the efficient solutions $\posN$ in step (0) of Algorithm~\ref{alg: modMOMPC terminal} such that they satisfy the inequality
	\begin{equation} 
		J_1^N(x_0,\posN) \le \gamma_J(\|x_0-x^e\|)  + N\ell_1(x^e,u^e). \label{eq: Jbound}
	\end{equation}%
	Then the (optimal) equilibrium $\eq$ is asymptotically stable on $\X_N$ for the MPC closed loop defined in Algorithm \ref{alg: modMOMPC terminal}.
\end{thm}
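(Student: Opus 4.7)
The strategy is to adapt the classical MPC Lyapunov argument to the multiobjective setting by applying Theorem~\ref{thm: stab with lyap} to the closed-loop map $g(k,x)=f(x,\mu^N(k,x))$, using as Lyapunov candidate the rotated cost along the efficient solution selected by the algorithm,
\[V(k,x) := \tJ_1^N(x,\mathbf{u}^\star_{x,k}),\]
where at $k=0$ the solution $\posN$ satisfies \eqref{eq: Jbound} and for $k\ge 1$ the solution $\posNxk$ satisfies \eqref{ineq: mpcend1}. Existence of these efficient solutions comes from Assumption~\ref{ass: modTerminal}(v), and recursive feasibility from Assumption~\ref{ass: modTerminal}(iv), exactly as already exploited in Lemma~\ref{lem: rotPerf} and Theorem~\ref{thm: perf, with, 1}.

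For the lower bound, strict dissipativity gives $\tell_1(x,u)\ge \alpha_{\ell,1}(\norm{x-\eq})$, so the first summand of $\tJ_1^N$ already dominates $\alpha_{\ell,1}(\norm{x-\eq})$ provided that $\tF_1\ge 0$ on $\X_0$. I would verify the latter by iterating \eqref{eq: F1decay} along the $\kappa$-driven trajectory starting at $x\in\X_0$; this trajectory stays in $\X_0$ by Assumption~\ref{ass: modTerminal}(iv)(a) and converges to $\eq$ by strict dissipativity, so Assumption~\ref{ass: cont1}(i) and (ii) together with $\lambda_1(\eq)=0$ force $\tF_1\to\tF_1(\eq)=0$ and telescoping yields $\tF_1(x)\ge\sum_{k\ge 0}\tell_1(x_\kappa(k,x),\kappa(x_\kappa(k,x)))\ge 0$. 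Hence $V(k,x)\ge \alpha_1(\norm{x-\eq})$ with $\alpha_1:=\alpha_{\ell,1}$.

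For the upper bound at $k=0$, I combine hypothesis \eqref{eq: Jbound} with the rotation identity \eqref{eq: rel_rot_norm} and Assumption~\ref{ass: cont1}(ii) (using $\lambda_1(\eq)=0$) to obtain $V(0,x_0)\le \gamma_J(\norm{x_0-\eq})+\gamma_{\lambda_1}(\norm{x_0-\eq})=:\alpha_2(\norm{x_0-\eq})$. The decrease is the standard MPC chain: the shifted candidate $\ub_{x(k+1)}$ built in step~(2) is feasible by Assumption~\ref{ass: modTerminal}(iv)(a), and \eqref{eq: F1decay} gives
\[\tJ_1^N(x(k+1),\ub_{x(k+1)}) \le \tJ_1^N(x(k),\posNxk) - \tell_1(x(k),\mu^N(k,x(k))).\]
Translating \eqref{ineq: mpcend1} into rotated form via \eqref{eq: rel_rot_norm} then yields $V(k+1,x(k+1))\le V(k,x(k))-\alpha_{\ell,1}(\norm{x(k)-\eq})$, which is condition~(ii) of Definition~\ref{def: lyap} with $\alpha_V:=\alpha_{\ell,1}$.

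The main obstacle is that \eqref{eq: Jbound} is imposed only at step~(0), so the pointwise upper bound of Definition~\ref{def: lyap}(i) is not immediate for $k\ge 1$. I would resolve this by extending $V$ to the forward-invariant family $S(k):=\X_N$ via a hypothetical restart of the algorithm from $x$ at time $k$ (which, by Assumption~\ref{ass: modTerminal}(v), always admits an efficient solution satisfying \eqref{eq: Jbound}), and re-checking that the decrease above still applies because \eqref{ineq: mpcend1} continues to hold for the restarted choice. Alternatively, one can bypass Definition~\ref{def: lyap} altogether and close by hand: the chain $\alpha_1(\norm{x(k)-\eq})\le V(k,x(k))\le V(0,x_0)\le \alpha_2(\norm{x_0-\eq})$ delivers Lyapunov stability, while nonnegativity of $V$ makes $\tell_1(x(k),\mu^N(k,x(k)))$ summable, so $\alpha_{\ell,1}(\norm{x(k)-\eq})\to 0$ and $x(k)\to\eq$ as in Corollary~\ref{cor: conv, with, mod}, giving asymptotic stability on $\X_N$.
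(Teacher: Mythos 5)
Your proposal follows essentially the same route as the paper's proof: the same Lyapunov candidate $V(k,x)=\tJ_1^N(x,\ub^{\star}_{x})$, the same lower bound obtained by telescoping \eqref{eq: F1decay} along the $\kappa$-driven loop to get $\tF_1\ge 0$, the same upper bound $\gamma_J+\gamma_{\lambda_1}$ from \eqref{eq: Jbound} and \eqref{eq: rel_rot_norm}, and the same decrease via the shifted candidate together with \eqref{ineq: mpcend1}. The one point where you go beyond the paper is in explicitly flagging that the upper bound is only available at $k=0$; your fallback of reading Lyapunov stability directly off the chain $\alpha_1(\norm{x(k)-\eq})\le V(k,x(k))\le V(0,x_0)\le\alpha_2(\norm{x_0-\eq})$ and combining it with the attractivity from Corollary \ref{cor: conv, with, mod} is a clean way to close this, which the paper's own proof passes over silently.
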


\begin{proof}
	We follow the proof of the single-objective case and show that the modified cost functional $\tJ_1^N$ is a uniform time-varying Lyapunov function (see Definition \ref{def: lyap}) for the closed-loop system for $\eq$. Then, we can conclude, using Theorem \ref{thm: stab with lyap}
	, that the equilibrium $\eq$ is asymptotically stable. Without loss of generality we may assume $\ell_1\equ=0$, because replacing $\ell_1$ by $\ell_1-\ell_1\equ$ does not change the closed-loop solutions and thus not the stability.
	
	To this end we first show an auxiliary inequality. 
	In order to simplify the notation, we write $x$ instead of $x(k,x_0)$ and $x^+$ instead of $x(k+1,x_0)$ for the states on the MPC closed-loop solution. Then for the control sequences defined in Algorithm \ref{alg: modMOMPC terminal} it holds that
	\[ x^+ = f(x,\ub^{\star,N}_{x}(0)), \qquad x_{\ub_{x^+}}(N,x^+) = f(x_{\ub_{x^+}}(N-1,x^+), \kappa(x_{\ub_{x^+}}(N-1,x^+)))\]
	and
	\[ x_{\ub_{x^+}(\cdot+1)}(j,x^+) = x_{\ub^{\star}_{x}}(j+1,x) \qquad \mbox{for } j=0,\ldots,N-1.\]
	Moreover, we observe the relation
	\[\tJ_1^N(x,\ub^{\star}_{x})=\sum_{j=0}^{N-2} \tell_1(x_{\ub^{\star}_{x}(\cdot+1)}(j,x^+), \ub^{\star}_{x}(j+1))+\tell_1(x,\ub^{\star}_{x}(0))+F_1(x_{\ub_{x^+}(\cdot+1)}(N-1,x^+)).\]
	Using these identities and inequality \eqref{ineq: mpcend1}  it thus follows that
	\begin{eqnarray*} 
		\widetilde J_1^N(x^+,\ub^{\star}_{x^+}) & \le & \widetilde J_1^N(x^+,\ub_{x^+}) \\
		& = & \sum_{j=0}^{N-1} \tell_1(x_{\ub_{x^+}}(j,x^+), \ub_{x^+}(j)) + \widetilde F_1(x_{\ub_{x^+}}(N,x^+)) \\
		&=&\sum_{j=0}^{N-2} \tell_1(x_{\ub^{\star}_{x}(\cdot+1)}(j,x^+), \ub^{\star}_{x}(j+1))
\\
&& + \;\; \tell_1(x_{\ub^{\star}_{x}(\cdot+1)}(N-1,x^+)), \kappa(x_{\ub^{\star}_{x^+}(\cdot+1)}(N-1,x^+))\\
&& + \;\; \widetilde F_1(x_{\ub_{x^+}}(N,x^+))\\
& = & \widetilde J_1^N(x,\ub^{\star}_{x}) - \tell_1(x,\ub^{\star}_{x}(0)) \\
&& + \;\; \tell_1(x_{\ub^{\star}_{x}(\cdot+1)}(N-1,x^+), \kappa(x_{\ub^{\star}_{x^+}(\cdot+1)}(N-1,x^+))\\
&& + \;\; \widetilde F_1(x_{\ub_{x^+}}(N,x^+)) - \widetilde F_1(x_{\ub^{\star}_{x^+}(\cdot+1)}(N-1,x^+))\\
& \le & \widetilde J_1^N(x,\ub^{\star}_{x}) - \tell_1(x,u^{\star}_{x}(0)). 
\end{eqnarray*}
In the last step,  we used inequality \eqref{eq: F1decay} with $x=x_{\ub^{\star}_{x^+}(\cdot+1)}(N-1,x^+)$ and $\ell\equ=0$.

%
%
We will now check that $V(k,x) := \tJ_1^N(x,\ub^{\star}_{x})$, with $\ub^{\star}_{x}$ denoting the control from the $k$-th step of the MPC iteration, is a uniform time-varying Lyapunov function according to Definition \ref{def: lyap} for $g(k,x) = f(x,\mu^N(k,x))$.
In order to do this, we show  the existence of $\alpha_1,\alpha_2,\alpha_3\in\K_\infty$, such that the inequalities 
\begin{enumerate}[(i)]
\item $\alpha_1(\norm{x_0-\eq})\leq \	\tJ_1^N(x_0,\posN)\leq \alpha_2(\norm{x_0-\eq})$
\item $\tell_1(x,u)\geq \alpha_3(\norm{x-\eq})$
\end{enumerate}
hold for all $x,x_0\in\X$. Condition (ii) is satisfied by our strict dissipativity assumption with $\alpha_3 = \alpha_{\ell,1}$. For the inequalities in condition (i) we first need to establish a lower bound for $\tF_1$. We recall Assumption \ref{ass: modTerminal} (iv) with local feedback $\kappa$ for each $x\in\X_0$. Then, Assumption \ref{ass: modTerminal} and strict dissipativity imply
\[\tF_1(f(x,\kappa(x)))\leq \tF_1(x)-\tell_1(x,\kappa(x))\leq \tF_1(x)-\alpha_{\ell,1}(\norm{x-\eq}).\]
By induction along the closed-loop solution for the local feedback $\kappa$ we then obtain
\[\tF_1(x_\kappa(K,x))\leq \tF_1(x)-\sum_{k=0}^{K-1}\alpha_{\ell,1}(\norm{x_\kappa(k,x)-\eq}).\]
By Assumption \ref{ass: cont1} (i) and (ii) and Corollary \ref{cor: conv, with, mod} this implies $\tF_1(x_\kappa(K,x))\to \tF(\eq)=0$ as $K\to\infty$ from which we can conclude
\[\tF_1(x)\geq \lim_{K\to \infty}\sum_{k=0}^{K-1}\alpha_{\ell,1}(\norm{x_\kappa(k,x)-\eq})\geq  \alpha_{\ell,1}(\norm{x-\eq})\geq 0.\]
From this, the definition of $\tJ_1^N$ immediately implies $\tJ_1^N(x_0,\posN)\geq \tell_1(x_0,\mu^N)\geq \alpha_{\ell,1}(\norm{x_0-\eq})$ and thus the inequality for $\alpha_1$ with $\alpha_1=\alpha_{\ell,1}$.

Finally, since $\tJ_1^N\equ =0$ and due to Assumption \ref{ass: cont1} (ii), the (in)equalities \eqref{eq: Jbound}, \eqref{eq: rel_rot_norm}, and $\ell\equ=0$ it follows that $\alpha_2 = \gamma_{\lambda_1}+\gamma_{J_1^N}$.
\end{proof}

Observe that in the case of stabilizing stage costs, we obtain $\lambda_1\equiv 0$ and $\ell_1\equ=0$, and thus $\tJ_1^N = J_1^N$. This implies that the objective function itself is a Lyapunov function.

\begin{rem} (i) It is not a priori clear that inequality \eqref{eq: Jbound} can be satisfied. In order to guarantee this, techniques similar to those used, e.g., in \cite[Proposition 5.14]{Gruene2017a} or \cite[Propositions 2.15 or 2.16]{RaMD17} for single-objective MPC could be used.\\[2mm]
(ii) If inequality \eqref{eq: Jbound} can be satisfied, then it will restrict the choice of the efficient point in step~(0) of Algorithm \ref{alg: modMOMPC terminal}. In particular, enforcing \eqref{eq: Jbound} will typically require to put more emphasis on the cost $J_1^N$ at the expense that the performance of $J_i^N$ for $i\ge 2$ may deteriorate.
\end{rem}

\subsection{Non-averaged Performance Estimates on $J_i$}
In the next section we aim to show a non-averaged performance result on $J_i$ for $i=2,\dots,s$. For this purpose we will use the results of the previous sections and combine them with the idea of the performance of single-objective economic MPC without terminal conditions, see \cite{Gruene2017a}. To this end, we consider the trajectories $x$ which are driven by the efficient solution $\posN$. We denote these trajectories by $x(\cdot)=x_{\posN}(\cdot,x_0)$ and name them efficient trajectories.

First, we show that the end points of the efficient trajectories are close to the equilibrium because of stability and strict dissipativity for the stage cost $\ell_1$.

\begin{lem} \label{lem: endbound}Let Assumptions \ref{ass: modTerminal} and \ref{ass: cont1} hold and consider efficient trajectories $x(j) = x_{\posN}(j,x_0)$, $j=0,\ldots,N$, for which there is $\gamma_J\in\K_\infty$ such that 
\begin{equation} 
J_1^{N-j}(x(j),\posN(j+\cdot))- N\ell_1\equ \le \gamma_J(\|x(j)-\eq\|) \qquad \mbox{for all } j=0,\ldots,N. \label{eq: Jboundall}
\end{equation}
Then there are $\rho_1,\rho_2\in\K_\infty$ such that for all $N\in\N$ the final points on the trajectories satisfy
\[\norm{x(N) -\eq}\le \rho_1(\rho_2(\|x_0-x^e\|)/N).\]
\end{lem}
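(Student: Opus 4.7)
The plan is to combine strict dissipativity of $\ell_1$ with the assumed tail-wise bound \eqref{eq: Jboundall} via a turnpike-type pigeonhole argument: \eqref{eq: Jboundall} provides a budget that the nonnegative rotated stage costs $\tell_1$ must live within, forcing some intermediate state $x(k^*)$ to be close to $\eq$; then re-applying the bound at $j=k^*$ and using a lower bound on $\tF_1$ will transport this closeness to $x(N)$.

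First, I would reduce to $\ell_1\equ=0$ without loss of generality, as in the proof of Theorem~\ref{thm: as_stab}. The identity \eqref{eq: rel_rot_norm} applied to each tail, combined with Assumption~\ref{ass: cont1}(ii) and \eqref{eq: Jboundall}, then yields
\[\tJ_1^{N-j}(x(j),\posN(j+\cdot)) \le \gamma(\|x(j)-\eq\|), \qquad j=0,\ldots,N,\]
with $\gamma := \gamma_J + \gamma_{\lambda_1} \in \K_\infty$. Taking $j=0$ and expanding $\tJ_1^N$ into $\sum_{k=0}^{N-1}\tell_1(x(k),u(k))+\tF_1(x(N))$, the lower bound $\tell_1(x,u)\ge \alpha_{\ell,1}(\|x-\eq\|)$ from strict dissipativity together with $\tF_1\ge 0$ gives $\sum_{k=0}^{N-1}\alpha_{\ell,1}(\|x(k)-\eq\|) \le \gamma(\|x_0-\eq\|)$. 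By pigeonhole, there then exists $k^*\in\{0,\ldots,N-1\}$ with $\|x(k^*)-\eq\| \le \alpha_{\ell,1}^{-1}\bigl(\gamma(\|x_0-\eq\|)/N\bigr)$.

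Second, I would apply the rotated-cost bound at $j=k^*$. Using $\tell_1\ge 0$ and the terminal-cost inequality $\tF_1(x) \ge \alpha_{\ell,1}(\|x-\eq\|)$ on $\X_0$ (which is established inside the proof of Theorem~\ref{thm: as_stab}), I would conclude
\[\alpha_{\ell,1}(\|x(N)-\eq\|) \le \tF_1(x(N)) \le \tJ_1^{N-k^*}(x(k^*),\posN(k^*+\cdot)) \le \gamma\bigl(\alpha_{\ell,1}^{-1}(\gamma(\|x_0-\eq\|)/N)\bigr),\]
so the claim follows with $\rho_2 := \gamma$ and $\rho_1 := \alpha_{\ell,1}^{-1}\circ\gamma\circ\alpha_{\ell,1}^{-1} \in \K_\infty$.

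The main obstacle is the lower bound $\tF_1(x)\ge \alpha_{\ell,1}(\|x-\eq\|)$ on $\X_0$: it is not packaged as a standalone result but is derived mid-proof in Theorem~\ref{thm: as_stab} by iterating the local feedback $\kappa$ and invoking Assumption~\ref{ass: cont1} together with Corollary~\ref{cor: conv, with, mod}. A minor additional care point is the reduction to $\ell_1\equ=0$: this is what aligns \eqref{eq: Jboundall} (which carries $N\ell_1\equ$ independently of $j$) with the natural tail-wise estimate on $\tJ_1^{N-j}$.
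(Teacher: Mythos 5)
Your argument is correct and follows essentially the same route as the paper's proof: bound the rotated tail costs by $\rho_2:=\gamma_J+\gamma_{\lambda_1}$, use a pigeonhole argument on the nonnegative rotated stage costs to locate an intermediate state near $\eq$, re-apply the tail bound there, and transfer to $x(N)$ via the lower bound $\tF_1(x)\ge\alpha_{\ell,1}(\norm{x-\eq})$ from the proof of Theorem~\ref{thm: as_stab}. The only (harmless) differences are that you restrict the pigeonhole to the $N$ stage-cost terms, which spares you the paper's case distinction and the resulting $\max$ in $\rho_1$, and that you make the $\ell_1\equ$-normalization explicit.
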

\begin{proof} 
From \eqref{eq: Jboundall} and \eqref{eq: rel_rot_norm} we obtain that 
\[ \widetilde J_1^N(x(j),\posN(j+\cdot)) \le \gamma_J(\|x(j)-x^e\|) + \gamma_{\lambda_1}(\|x(j)-x^e\|) =:\rho_2(\|x(j)-x^e\|). \]
Using this inequality for $j=0$ implies that there exists a time index $j_0\in\{0,\ldots,N\}$ such that $\tell_1(x(j_0),\posN(j_0)) \le \rho_2(\|x(j_0)-x^e\|)/N$ if $j_0<N$ or $\widetilde F_1(x(N))\le \rho_2(\|x(j_0)-x^e\|)/N$. If $j_0<N$, then using the lower bound from the dissipativity $\alpha_{\ell,1}$ on $\tell_1$ and, again, the inequality above it follows that  
\[ \widetilde F_1(x(N)) \le \widetilde J_1^{N-j_0}(x(j_0),\posN(j_0+\cdot)) \le \rho_2\left(\alpha_{\ell,1}^{-1}(\rho_2(\|x(j_0)-x^e\|)/N)\right). \]
Since, as shown in the proof of Theorem \ref{thm: as_stab}, $\alpha_{\ell,1}$ is also a lower bound on $\widetilde F_1$, we obtain
\[ \|x(N)-x^e\| \le \max\{ \alpha_{\ell,1}^{-1}(\rho_2(\|x(j_0)-x^e\|)/N), \alpha_{\ell,1}^{-1}\circ\rho_2\circ\alpha_{\ell,1}^{-1}(\rho_2(\|x(j_0)-x^e\|)/N)\}.\]
This implies the assertion with $\rho_1(r)= \max\{\alpha_{\ell,1}^{-1}(r),\alpha_{\ell,1}^{-1}\circ\rho_2\circ\alpha_{\ell,1}^{-1}(r)\}$.
\end{proof}

Next, in order to establish a performance estimate on $J_i$, $i=2,\dots,s$ we extend the constraint \eqref{ineq: mpcend1} to all $i=1,\ldots,s$. In this way we end up with an algorithm originally proposed in \cite{Gruene2019a}. However, different from \cite{Gruene2019a}, for the subsequent results we still do not require additional properties of the stage cost $\li$ for $i\ge 2$. We can avoid these conditions by exploiting that the feedback from Assumption \ref{ass: modTerminal}(iv) steers a state $x$ to the equilibrium $\eq$.

\begin{ass}\label{ass: kappabound} For each $i=2,\ldots,s$ there is $\gamma_i\in\K_\infty$ such that
\[ \ell_i(x,\kappa(x)) \le \ell_i(x^e,u^e) + \gamma_i(\|x-x^e\|) \]
holds for all $x\in\X_0$ and $\kappa$ from Assumption \ref{ass: modTerminal}(iv). 
\end{ass}

\begin{algorithm}[H] \caption{MO MPC with terminal conditions and constraints on $J_1,\ldots,J_s$}\label{alg: modMOMPC2}
\begin{algorithmic}
\vspace*{7pt}
\Require{MPC Horizon $N$, number of iterations $K$, $x_0\in\X$ and $\kappa$ from Assumption \ref{ass: modTerminal} (iv).}
\For $k = 0,\dots,K$:
\begin{itemize}
	\item[(0)] If $k=0$, set $x(0)=x_0$ and choose an efficient solution $\posNxn\in\U_\mathcal{P}^N(x(0))$ of \eqref{MO MPC terminal}. Go to (2).
	\item[(1)] If $k\geq 1$, choose a efficient solution $\posNxk$ of \eqref{MO MPC terminal} with $x_0 = x(k)$ so that the inequalities
	\begin{equation}\label{ineq: mpc_constr}
		J_i^N(x(k),\posNxk)\leq J_i^N(x(k),\ub_{x(k)}), \quad i=1,\dots,s.
	\end{equation}
	hold.
	\item[(2)] For $x:=x_{u^{\star}_{x(k)}}(N,x(k))$ set
	\[\ub_{x(k+1)}:=\left(u_{x(k)}^{\star}(1),\dots,u_{x(k)}^{\star}(N-1),\kappa(x)\right).\]
	\item[(3)] Apply the feedback $\mu^N(k,x(k)):=u_{x(k)}^{\star}(0)$, i.e., evaluate $x(k+1) = f(x(k),\mu^N(k,x(k)))$, set $k=k+1$ and go to (1).
\end{itemize}
\EndFor
\Ensure{MPC closed-loop trajectory $x_\mu(k,x_0) := x(k)$, $k\in\N_0$}
\end{algorithmic}
\end{algorithm}
\begin{figure}[h]
\begin{center}
\includegraphics[scale = 0.8]{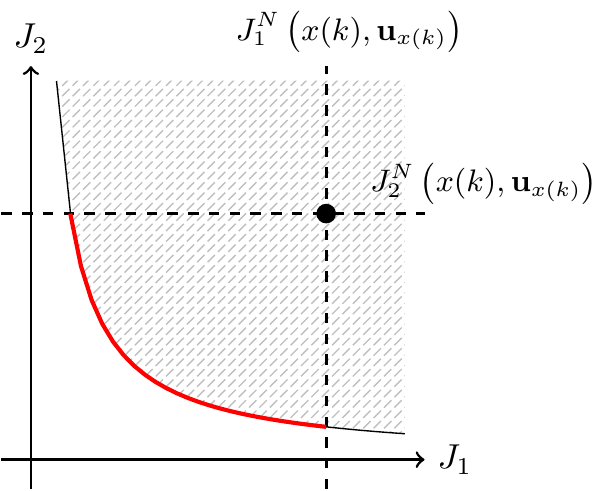}
\caption{Visualization of step (1)}\label{fig: paretoJ}
\end{center}
\end{figure}
In Figure \ref{fig: paretoJ} we have visualized the bounds of step (1) in Algorithm \ref{alg: modMOMPC2}. The dashed lines represent the bounds \eqref{ineq: mpc_constr} and the red line the set of nondominated points satisfying these inequalities. We remark that now all cost criteria $\Jni$ are bounded. For Algorithm \ref{alg: modMOMPC2} we can state the following performance result.
\begin{thm} [Performance Estimate for $J_i$] \label{thm: perf}
Let Assumptions \ref{ass: modTerminal}, \ref{ass: cont1}, and \ref{ass: kappabound} hold and assume that the efficient solutions generated by Algorithm \ref{alg: modMOMPC2} satisfy the inequalities \eqref{eq: Jboundall} for some $\gamma_J\in\K_\infty$. Then for all $i=2,\ldots, s$ and for any $C>0$ there is a function $\delta_i\in\LL$ such that
\[ J_i^K(x_0,\mu^N) \le J_i^N(x_0,\posN) + (K-N)\ell_i(x^e,u^e) + K\delta_i(N)\]
for all $N,K\in\N$ with $K\ge N$ and all $x_0\in\X_N$ with $\|x_0-x^e\| \le C$.
\end{thm}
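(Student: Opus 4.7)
The plan is to derive a one-step dissipation-type inequality for $J_i^N$ along the closed loop, telescope it over $k=0,\dots,K-1$, and then handle the two residual terms that appear — the $\kappa$-step entering through the shift candidate of step~(2) of Algorithm~\ref{alg: modMOMPC2}, and the final-time term $-J_i^N(x(K),\posNxK)$ — via Lemma~\ref{lem: endbound} together with Assumption~\ref{ass: kappabound}, and via a Jensen-type turnpike bound based on Lemma~\ref{lem: lbound}, respectively. Applying \eqref{ineq: mpc_constr} with the concrete shift candidate $\ub_{x(k+1)}$ and expanding $J_i^N(x(k+1),\ub_{x(k+1)})$ as the tail of the prediction of $\posNxk$ plus one $\kappa$-step gives
\[
J_i^N(x(k+1),\posNxkk) \le J_i^N(x(k),\posNxk) - \ell_i(x(k),\mu^N(k,x(k))) + \ell_i\bigl(x_{\posNxk}(N,x(k)),\kappa(x_{\posNxk}(N,x(k)))\bigr),
\]
and telescoping yields
\[
J_i^K(x_0,\mu^N) \le J_i^N(x_0,\posN) - J_i^N(x(K),\posNxK) + \sum_{k=0}^{K-1}\ell_i\bigl(x_{\posNxk}(N,x(k)),\kappa(x_{\posNxk}(N,x(k)))\bigr).
\]

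For the $\kappa$-sum I would combine Assumption~\ref{ass: kappabound} with Lemma~\ref{lem: endbound} applied at each $k$, which is legal because \eqref{eq: Jboundall} is assumed along every efficient prediction generated by the algorithm. Since the hypothesis \eqref{eq: Jbound} of Theorem~\ref{thm: as_stab} is exactly \eqref{eq: Jboundall} at $j=0$, the closed loop is asymptotically stable, hence uniformly bounded: $\|x_0-x^e\|\le C$ implies $\|x(k)-x^e\|\le C'$ for some $C'=C'(C)$ and all $k$. The $\kappa$-sum is therefore at most $K\ell_i(x^e,u^e) + K\gamma_i(\rho_1(\rho_2(C')/N))$, and the second summand is $K\delta_i^{(1)}(N)$ with $\delta_i^{(1)}\in\mathcal{L}$.

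The main technical obstacle is to produce a lower bound $J_i^N(x(K),\posNxK)\ge N\ell_i(x^e,u^e) - N\delta_i^{(2)}(N)$ with $\delta_i^{(2)}\in\mathcal{L}$, since $\ell_i$ is not assumed to satisfy any dissipativity inequality. My plan is to exploit that \eqref{eq: Jboundall} forces the rotated prediction cost starting at $x(K)$ to be small: by \eqref{eq: rel_rot_norm} and Assumption~\ref{ass: cont1}(ii),
\[
\tJ_1^N(x(K),\posNxK) \le \gamma_J(\|x(K)-x^e\|) + \gamma_{\lambda_1}(\|x(K)-x^e\|) \le \rho_2(C').
\]
Passing to a concave $\mathcal{K}_\infty$-majorant $\bar\omega_i\ge\omega_i$ of the function from Lemma~\ref{lem: lbound}, the triangle inequality followed by Jensen's inequality (using $\tell_1,\tF_1\ge 0$, cf.\ the proof of Theorem~\ref{thm: as_stab}) yields
\[
\bigl|J_i^N(x(K),\posNxK)-N\ell_i(x^e,u^e)\bigr| \le \sum_{j=0}^{N-1}\bar\omega_i\bigl(\tell_1(x_{\posNxK}(j,x(K)),\posNxK(j))\bigr) \le N\bar\omega_i\bigl(\tJ_1^N(x(K),\posNxK)/N\bigr) \le N\bar\omega_i\bigl(\rho_2(C')/N\bigr),
\]
so that $\delta_i^{(2)}(N):=\bar\omega_i(\rho_2(C')/N)\in\mathcal{L}$ does the job.

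Assembling the three estimates and using $K\ge N$ to absorb $N\delta_i^{(2)}(N)\le K\delta_i^{(2)}(N)$ yields the claim with $\delta_i:=\delta_i^{(1)}+\delta_i^{(2)}\in\mathcal{L}$.
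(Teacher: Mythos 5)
Your proposal is correct and, for the telescoping step and the treatment of the $\kappa$-term, coincides with the paper's argument: both apply \eqref{ineq: mpc_constr} to the shifted candidate $\ub_{x(k+1)}$, telescope, and control the $\kappa$-summand via Assumption \ref{ass: kappabound} combined with Lemma \ref{lem: endbound} and the uniform bound $\|x(k)-x^e\|\le \widetilde C$ obtained from asymptotic stability. Where you genuinely diverge is in the lower bound for the residual term $J_i^N(x(K),\posNxK)$. The paper exploits the quantitative decay $\|x(k)-x^e\|\le\chi(k)$ with $\chi\in\LL$ furnished by asymptotic stability, bounds each individual nonnegative summand of $\widetilde J_1^N(x(K),\posNxK)$ by $\gamma_J(\chi(K))$, applies Lemma \ref{lem: lbound} termwise, and then trades $K$ for $N$ via $\chi(K)\le\chi(N)$ and $N\le K$, ending with the error $K\omega_i(\gamma_J(\chi(N)))$. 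You instead keep only the uniform bound $\widetilde J_1^N(x(K),\posNxK)\le\rho_2(\widetilde C)$ and distribute it over the $N$ stages by Jensen's inequality applied to a concave majorant $\bar\omega_i$ of the function from Lemma \ref{lem: lbound}, obtaining the error $N\bar\omega_i(\rho_2(\widetilde C)/N)\le K\bar\omega_i(\rho_2(\widetilde C)/N)$. Both routes are sound; yours is somewhat more self-contained, as it does not require extracting an explicit $\LL$ decay rate $\chi$ from the stability statement, and it yields an error term whose structure mirrors the $\kappa$-term. The one point you should make explicit is that a \emph{global} concave $\K_\infty$ majorant of an arbitrary $\K_\infty$ function need not exist (think of $\omega_i(r)=r^2$); you only need the majorant on the compact interval $[0,\rho_2(\widetilde C)]$, where the arguments $\tell_1(\cdot,\cdot)$ are confined, and there the least concave majorant (extended affinely) does the job. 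With that caveat spelled out, the argument is complete.
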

\begin{proof} 
Consider the controls $\posNxk$ and $\ub_{x(k+1)}$ from Algorithm \ref{alg: modMOMPC2}, where $x(k)$ denotes the closed loop solution generated by the algorithm. Then Lemma \ref{lem: endbound} and Assumption \ref{ass: kappabound} imply
\[ J_i^N(x(k+1),\ub_{x(k+1)}) \le J_i^N(x(k),\posNxk) - \ell_i(x(k),\mu^N(k,x(k))) + \ell_i(x^e,u^e) + \gamma_i(\rho_1(\rho_2(\widetilde C)/N)).\]
Here we used that the asymptotic stability property of the closed loop from Theorem \ref{thm: as_stab} implies that whenever the initial value satisfies $\|x_0-x^e\|\le C$, then there is $\widetilde C>0$ such that $\|x(k)-x^e\|\le \widetilde C$ for all $k\in\N$. This inequality together with inequality \eqref{ineq: mpc_constr} for $i=2,\dots,s$ implies
\begin{eqnarray*} J_i^K(x_0,\mu^N) &  = & \sum_{k=0}^{K-1} \ell_i(x(k),\mu^N(k,x(k)))\\
& \le & \sum_{k=0}^{K-1}  \left(  J_i^N(x(k),\posNxk)  - J_i^N(x(k+1),\ub_{x(k+1)}) + \ell_i(x^e,u^e) +\gamma_i( \rho_1(\rho_2(\widetilde C)/N))\right)\\
& \le & \sum_{k=0}^{K-1}  \left(  J_i^N(x(k),\posNxk)  - J_i^N(x(k+1),\posNxkk) + \ell_i(x^e,u^e) +\gamma_i( \rho_1(\rho_2(\widetilde C)/N))\right)\\
& = & J_i^N(x_0,\posN) - J_i^N(x(K),\posNxK) + K\ell_i(x^e,u^e) + K\gamma_i(\rho_1(\rho_2(\widetilde C)/N)).
\end{eqnarray*}
Now again the asymptotic stability and the boundedness of $\|x_0-x^e\|$ imply the existence of $\chi\in\LL$ such that $\|x(k)-x^e\| \le \chi(k)$. 

This implies $\widetilde J_1^N(x(k),\posNxk)\le \gamma_J(\chi(k))$ and the individual (nonnegative) terms of this sum also satisfy $\tell_1(x_{\posNxk}(j,x(k)), \posNxk(j)) \le \gamma_J(\chi(k))$ for all $j=0,\ldots,N-1$. By Lemma \ref{lem: lbound} this yields $\ell_i(x_{\posNxk}(j,x(k)), \posNxk(j)) \ge \ell_i(x^e,u^e) - \omega_i(\gamma_J(\chi(k)))$ and we can conclude that
\[ J_i^N(x(K),\posNxK) \ge N\ell_i(x^e,u^e) - N\omega_i(\gamma_J(\chi(K))) \ge N\ell_i(x^e,u^e) - K\omega_i(\gamma_J(\chi(N))), \]
where we used $K\ge N$ and the monotonicity of $\chi\in\LL$ in the last step. This yields the assertion with 
\[ \delta_i(N) =  \rho_1(\rho_2(\widetilde C)/N) + \omega_i(\gamma_J(\chi(N))). \]
\end{proof}

\begin{rem} The fact that the error term $K\delta(N)$ grows linearly with $K$ might at the first glance make the estimate useless.  However, unless we are in the very special case that $\ell_i(x^e,u^e)=0$, the term $J_i^N(x_0,\posN) + (K-N)\ell_i(x^e,u^e)$ also grows affine linearly with $K$. Hence, for all sufficiently large $K$ the relative error is proportional to $\delta(N)$ and thus decreases to $0$ as $N$ tends to infinity. Hence, in terms of the relative error the estimate gives a perfectly useful estimate. We note that this estimate is structurally similar to estimates for the closed-loop performance of single-objective economic MPC without terminal conditions, see, e.g., \cite[Theorem 8.39]{Gruene2017a}.
\end{rem}

\begin{rem} The inequalities in \eqref{eq: Jboundall} restrict the efficient solutions, for which the performance statement in Theorem \ref{thm: perf} holds. If $\gamma_J\in\K_\infty$ can be computed or is known, the inequalities could be added as additional constraints in the optimization routine.
\end{rem}

\section{Numerical simulations}\label{sec: numerics}
The aim of this section is to illustrate the theoretical results of the previous sections. In the following we distinguish between the efficient solutions chosen in the different steps of Algorithm \ref{alg: modMOMPC terminal} and \ref{alg: modMOMPC2}. To this end, we introduce the following denominations for the efficient solutions in the algorithms:
\begin{itemize}
\item the efficient solution $\posN$ chosen in step (0), i.e., in the first iteration, we call \textit{the first efficient solution}.
\item the efficient solutions $\posNxk$ chosen in step (1), i.e., from iteration step $k=2$ onwards, we name \textit{the subsequent efficient solutions.} 
\end{itemize} 

For verifying the theoretical results we use the example of a chemical reactor, see \cite{Diehl2011, Zavala2015}.
\begin{bsp}[Reactor Part 1]\label{ex: reactor1}
We consider a single first-order, irreversible chemical reaction in an isothermal continuous stirred-tank reactor (CSTR)
\[A \to B \quad r=k_rc_A\]
in which $k_r = 1.2$ is the rate constant. The material balances and the system data are provided in \cite{Diehl2011} whereas the stage costs -- a tracking type cost forcing the solutions to a desired equilibrium and an economic stage cost maximizing the yield (by minimizing the negative yield) of the reaction -- are introduced in \cite{Zavala2015}. The overall
bi-objective optimal control problem  
is given by
\begin{align}\label{reactor}
\min_{\ub\in\U^N(c_0)} J^N(c_0,\ub)&=\left(\sum_{k=0}^{N-1}\ell_1(c(k,c_0),u(k)), \sum_{k=0}^{N-1}\ell_2(c(k,c_0),u(k))\right),\quad \nonumber\\
\text{s.t.}\quad	c_A(k+1)&=c_A(k)+\frac 1 2 \left(\frac{u(k)}{V}(c_{A_f}-c_A(k))-k_r{c_A(k)}\right),\nonumber\\
c_B(k+1)&=c_B(k)+\frac 1 2 \left(\frac{u(k)}{V}(c_{B_f}-c_B(k))+k_r{c_B(k)}\right),\nonumber\\
c(0)&=c_0=(0.4,0.2)\nonumber\\
c(N,c_0)\in\X_0&=\{(c^{e},u^{e})\}\nonumber\\
\X&=[0,20]\times[0,20],\quad \U=[0,20].
\end{align}
We consider the molar concentrations $c_A(k)\geq 0$ and $c_B(k)\geq 0$, $k\in\N$, of $A$ and $B$ respectively, and $0\leq u(k)\leq 20$(L/min) is the flow through the reactor at time $k$. The feed concentrations of $A$ and $B$ are given by $c_{A_f}=1$ mol/L, and $c_{B_f}=0$ mol/L respectively. The volume of the reactor is given by $V =10$ L. Further, we abbreviate the states by $c = (c_A, c_B)$ and we consider two stage costs given by
\begin{align*}
\ell_1(c,u)&=\frac 1 2(c_A-\frac 1 2)^2+\frac 1 2 (c_B-\frac 1 2)^2+\frac 1 2 (u -12)^2,\\
\ell_2(c,u)&=-2u c_B + \frac 1 2 u,
\end{align*}
where the second stage cost consists of the price of $B$ and a separation cost. These second costs therefore represent the (negative) economic yield of the reaction. Further, we set the terminal cost to zero, i.e. $F_i\equiv 0$ for $i=1,2$. The equilibrium under consideration of the system in \eqref{reactor} is given by $(c_A^{e},c_B^{e},u^{e})=(c^{e},u^{e})=(\frac 1 2, \frac 1 2, 12)$, which we also set as the terminal constraint, i.e. $\X_0=\{(c^{e},u^{e})\}$. This way, Assumption \ref{ass: modTerminal} is fulfilled since stabilizing stage costs always render the optimal control problem strictly dissipative and by setting $\kappa = u^{e}$ there exists a local feedback with the desired properties. Hence, Assumption \ref{ass: cont1} is also satisfied. By imposing box constraints $\U$ and $\X$ we can conclude external stability of $\JN_\Pa(c_0)$ and, thus, the trajectory convergence as well as the averaged and non-averaged performance of the first cost criterion $J_1$ by Corollary \ref{cor: conv, with, mod} and Theorem \ref{thm: perf, with, 1}.
\begin{figure}[h]
\begin{center}
	\includegraphics[scale = 0.3]{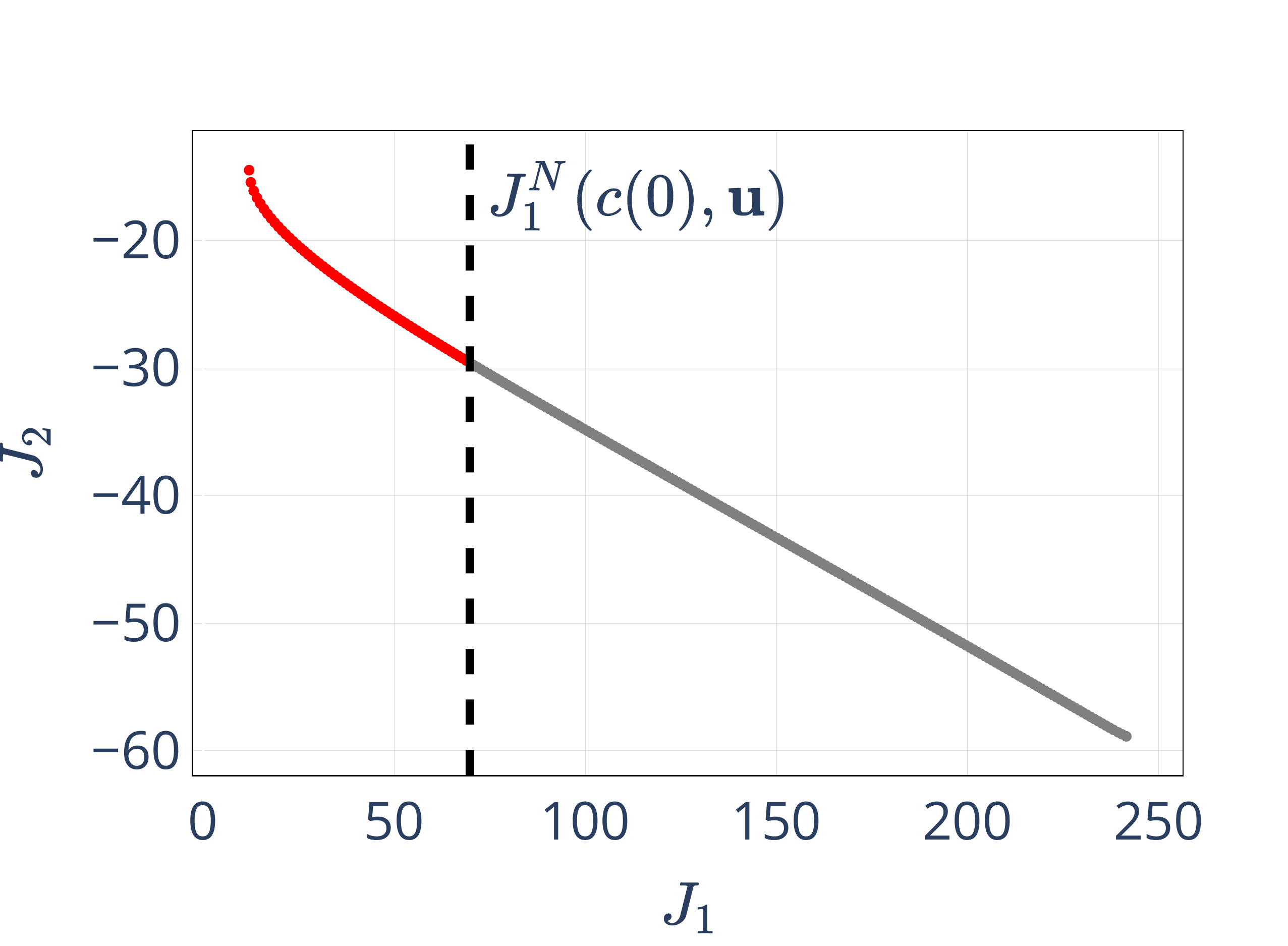}
	\caption{Visualization of step (1)}\label{fig: reactorBoundJ1}
\end{center}
\end{figure}

\begin{figure}[h]
\begin{center}
	\begin{minipage}{0.49\textwidth}
		\includegraphics[width =\textwidth]{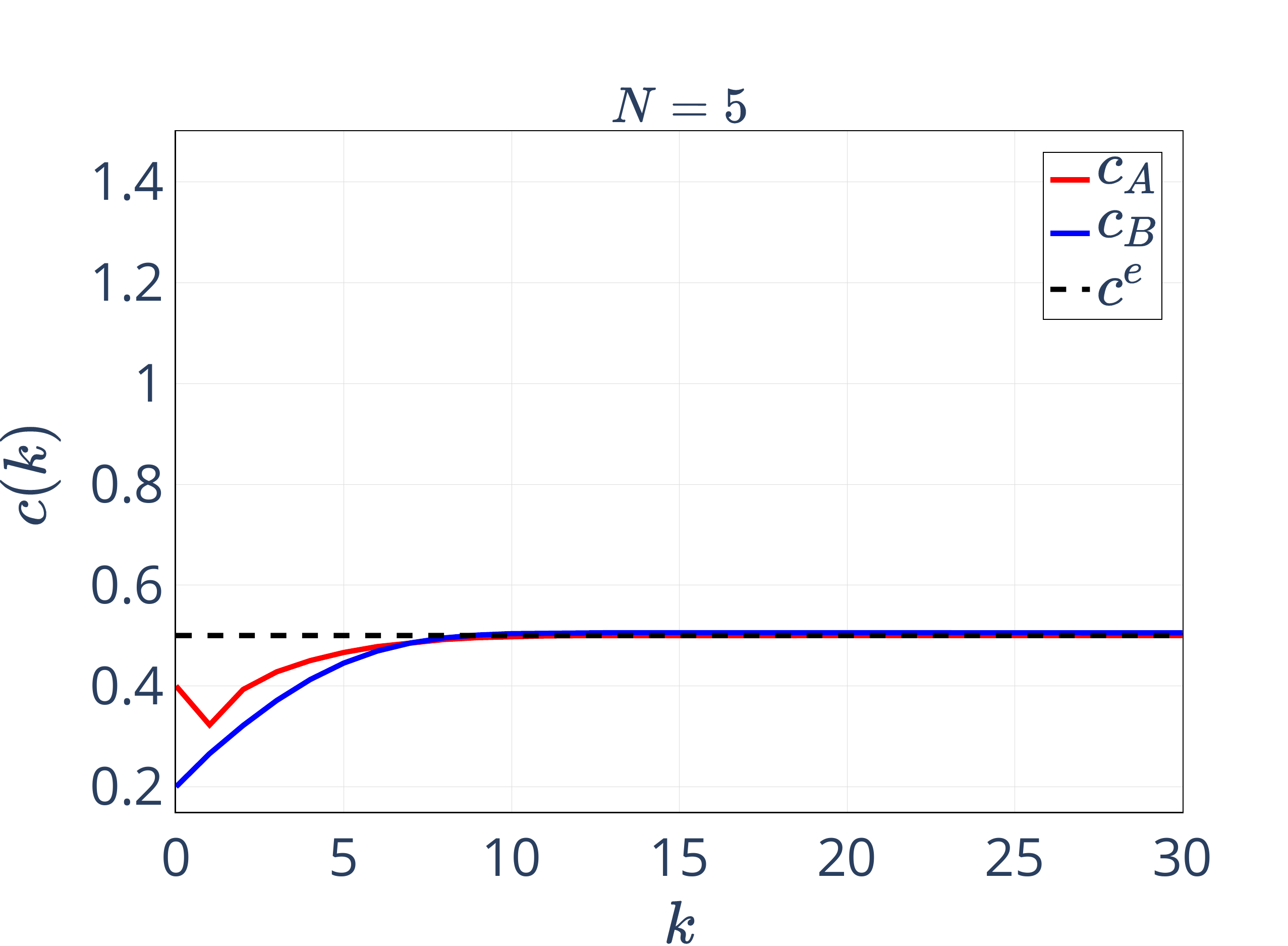}
	\end{minipage}
	\begin{minipage}{0.49\textwidth}
		\includegraphics[width =\textwidth]{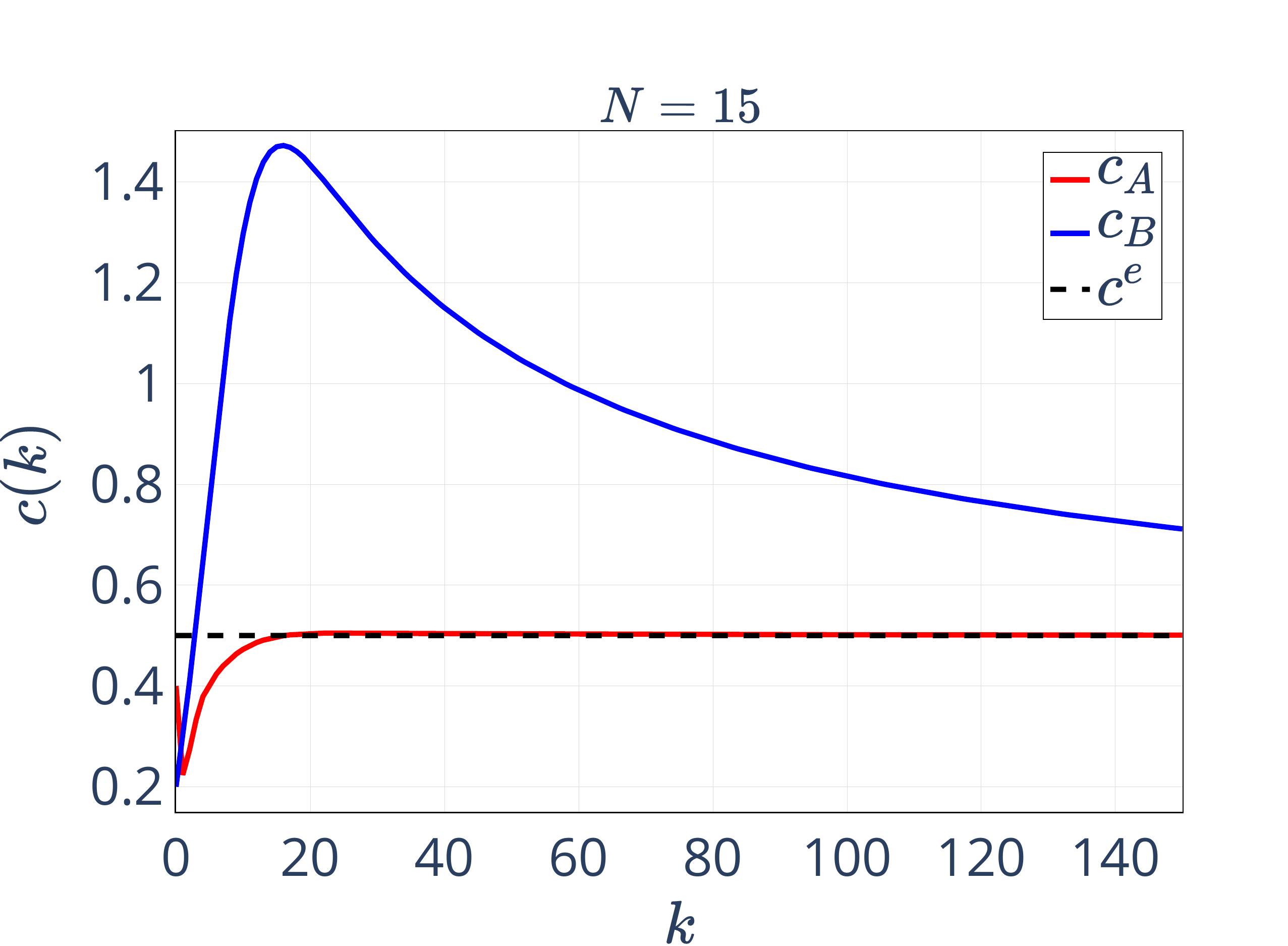}
	\end{minipage}
	\caption{Closed-loop trajectory for $N=5$ (left) and $N = 15$ (right)}\label{fig: tra}
\end{center}
\end{figure}

In this example we use 
\begin{itemize}
\item Algorithm \ref{alg: modMOMPC terminal} to substantiate our theoretical results with numerical simulations. Thus, we restrict only the first objective by the constraint in step (1) of the algorithm. The resulting bound on the nondominated set in the second iteration is visualized in Figure \ref{fig: reactorBoundJ1};
\item the ASMO Solver \cite{GithubASMO}, a solver  for nonlinear  multiobjective optimization, to generate an approximation of the nondominated set in the first iteration for choosing the first efficient solution. ASMO is an implementation of the algorithm presented in \cite{Eichfelder2008,Eichfelder2009} which combines the 
Pascoletti-Serafini scalarization with an adaptive parameter control to achieve an approximation of the nondominated set with well distributed approximation points;  
\item as the first efficient solution the efficient solution with $J^5(x_0,\ub_{c_0}^\star)=(54.034, 9.500)$ for $N=5$, and with $J^{15}(c_0,\ub_{c_0}^\star)=(408.459,-478.459)$ for $N=15$;
\item the global criterion method, also known as compromise programming approach, see, for instance, \cite{Miettinen1998}, to find efficient solutions of the multiobjective optimization problems  in the subsequent iterations as proposed in a multiobjective MPC context in \cite{Zavala2012, Zavala2015}. This means that the subsequent efficient solution $\posNxk$  is chosen in each iteration such that 
\begin{equation}\label{eq:defideal}
	\begin{array}{rcl}
		\posNxk \in\argmin 
		\bigg\{\left(
		\sum\limits_{i=1}^{s}|\Jni(x(k),\ub)-z_i^\star|^2\right)^{\frac 1 2}\,&\bigg|\,&
		\ub\in\U^N(x_0),\\
		&& J_1^N(x(k),\posNxk)\leq J_1^N(x(k),\ub_{x(k)})\bigg\},
\end{array}	\end{equation} 
where 
\[z_i^\star=\min \left\{\left. \Jni(x(k),\ub)\,\right|\ \ub\in\U^N(x_0),\ J_1^N(x(k),\posNxk)\leq J_1^N(x(k),\ub_{x(k)}) \right\},\]
for all $i=1,\ldots,s$, is set as the so called 
ideal point of the restricted problem, cf.\  \cite{Zavala2012};  i.e., $\posNxk$ is defined as pre-image of  the  nondominated point   which has the smallest Euclidean distance to the ideal point. Whenever applying Algorithm \ref{alg: modMOMPC2}  instead of Algorithm \ref{alg: modMOMPC terminal}, then $J_1^N(x(k),\posNxk)\leq J_1^N(x(k),\ub_{x(k)})$ is replaced by $J_i^N(x(k),\posNxk)\leq J_i^N(x(k),\ub_{x(k)})$, $i=1,\ldots,s$ in the above optimization problems. 
\end{itemize}
We note that for this example the optimization problems contained as subproblems in these algorithms are non-convex, hence we have no theoretical guarantees that the numerical optimization reached a globally optimal solution. However, the numerical results strongly suggest that globally optimal solutions were found in all our numerical simulations.

The behavior of the closed-loop trajectory is visualized in Figure \ref{fig: tra} for MPC-horizons $N=5$ and $N=15$. We observe that the trajectories converge to the equilibrium $c^e$ independently of the choice of MPC-horizon and the initial value. However, the MPC-horizon $N$ influences the convergence rate. On the left side, for $N=5$, the $c_B$-trajectory converges faster to the equilibrium point $c^{e} =\frac 1 2$ than for $N=15$. We note that this is a typical behavior of MPC with equilibrium terminal constraints, see \cite[Discussion after Ex.\ 7.23]{Gruene2017a}.
In addition, the components of the trajectory show different transient behavior.
\begin{figure}[h]
\begin{center}
	\begin{minipage}{0.49\textwidth}
		\includegraphics[width =\textwidth]{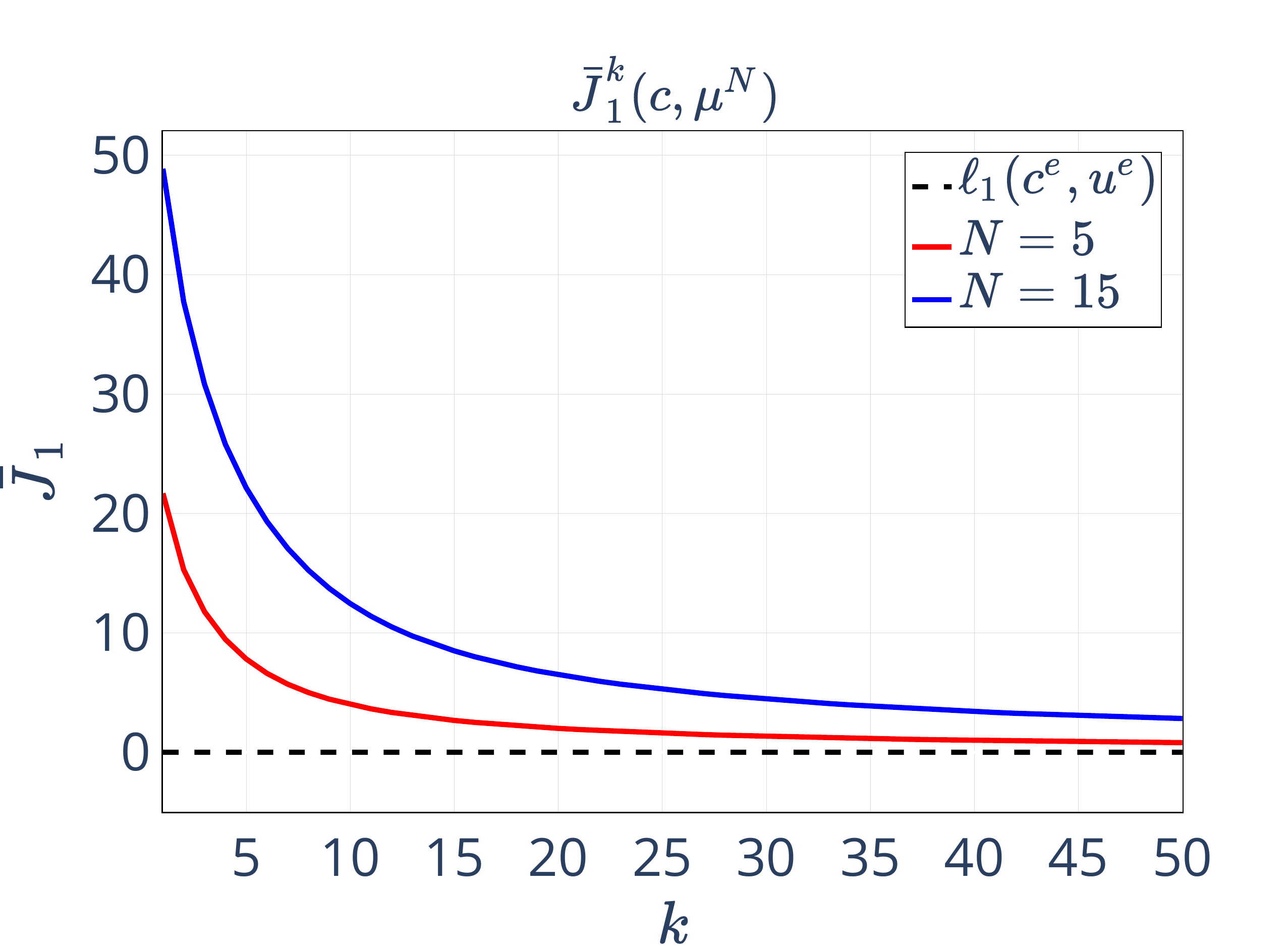}
		\caption{$\bar J_1^k$ for $N=5$ and $N = 15$}\label{fig: avJ1}
	\end{minipage}
	\begin{minipage}{0.49\textwidth}
		\includegraphics[width =\textwidth]{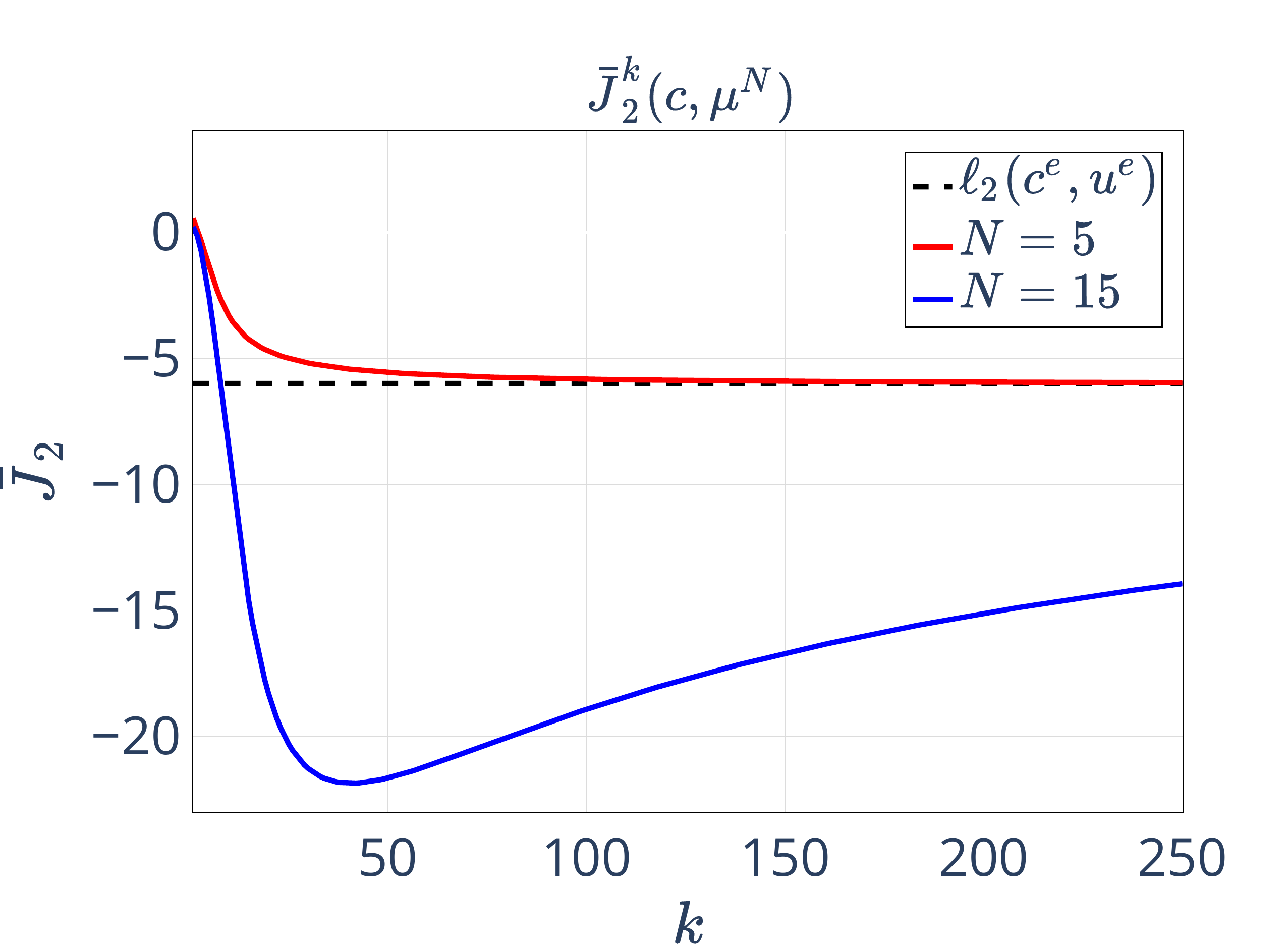}
		\caption{$\bar J_2^k$ for $N=5$ and $N = 15$}\label{fig: avJ2}
	\end{minipage}
\end{center}
\end{figure}

In contrast, the bound of the averaged performance of $J_1$ is independent of $N$, the initial value and the choice of the efficient solutions in each iteration. According to Remark \ref{rem: averJ1} the bound is given by $\ell_1(c^{e},u^{e})=0$. This bound and the averaged costs $\bar J_1^k$ in dependence of the iteration step $k$ are visualized in Figure \ref{fig: avJ1} for MPC-horizons $N=5$ and $N=15$. Additionally, the averaged cost of the cost criterion $J_2$ is visualized in Figure \ref{fig: avJ2} with bound $\ell_2(c^{e},u^{e})=-6$ for MPC-horizon $N=5$ and $N=15$. For both cost criteria the averaged cost $\bar J_i^k$, $i=1,2$, converges for $k\to\infty$ whereas for $N=5$ the convergence is significantly faster. Moreover, we remark that the second averaged cost $\bar J_2^k$ requires considerably more iterations to converge. 
\begin{figure}[h]
\begin{center}
	\begin{minipage}{0.328\textwidth}
		\includegraphics[width =\textwidth]{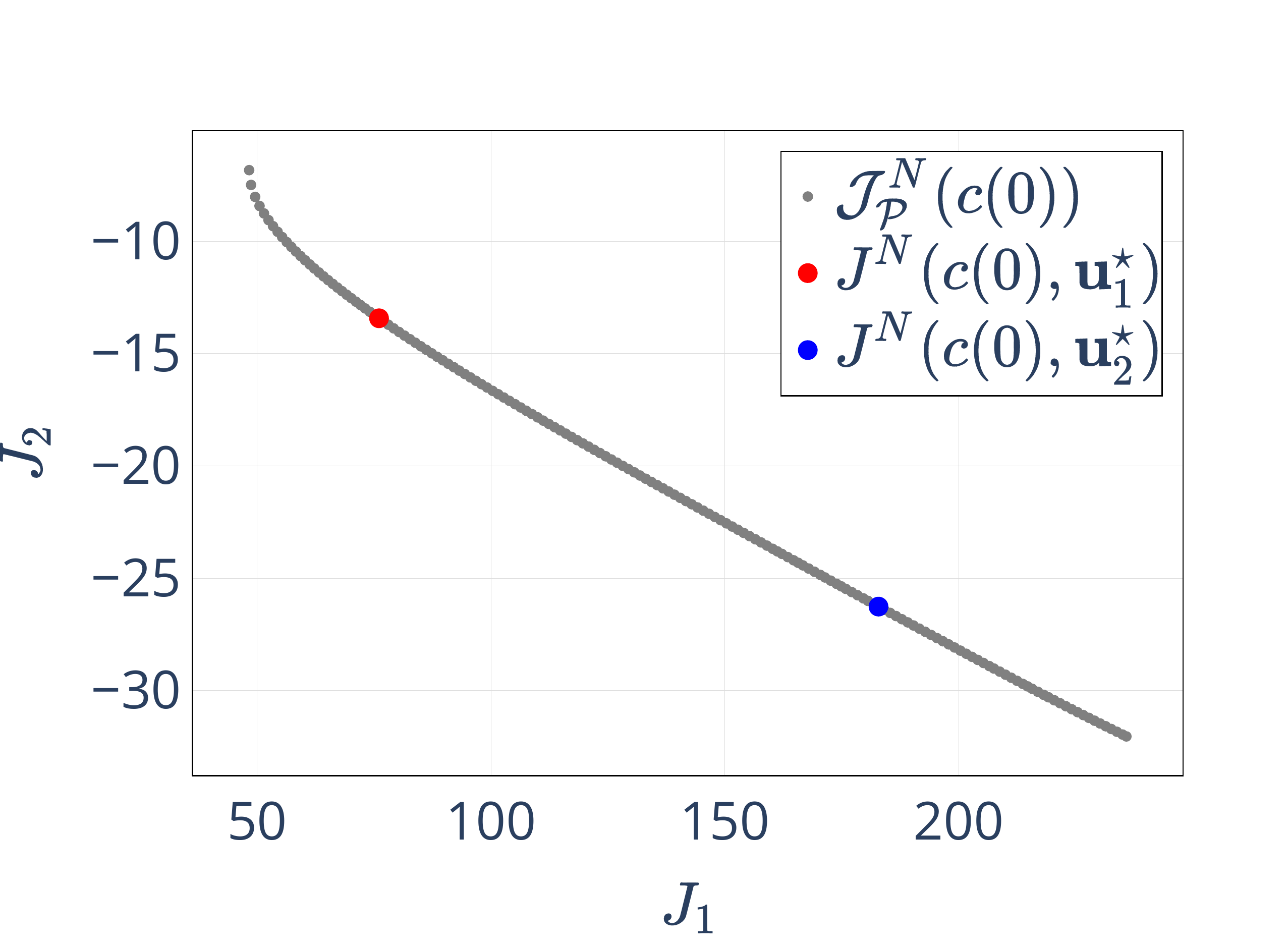}
	\end{minipage}
	\begin{minipage}{0.328\textwidth}
		\includegraphics[width =\textwidth]{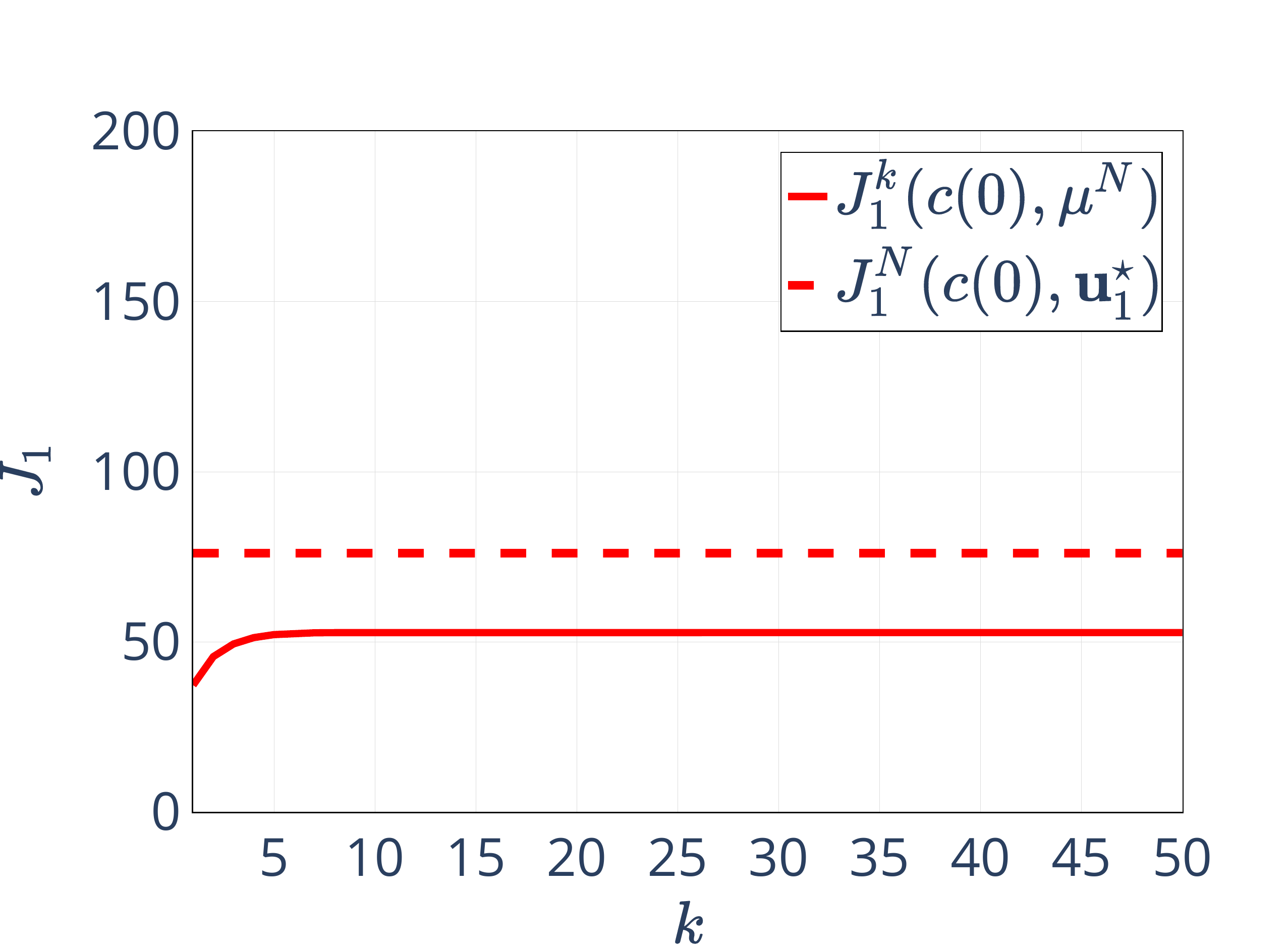}
	\end{minipage}
	\begin{minipage}{0.328\textwidth}
		\includegraphics[width =\textwidth]{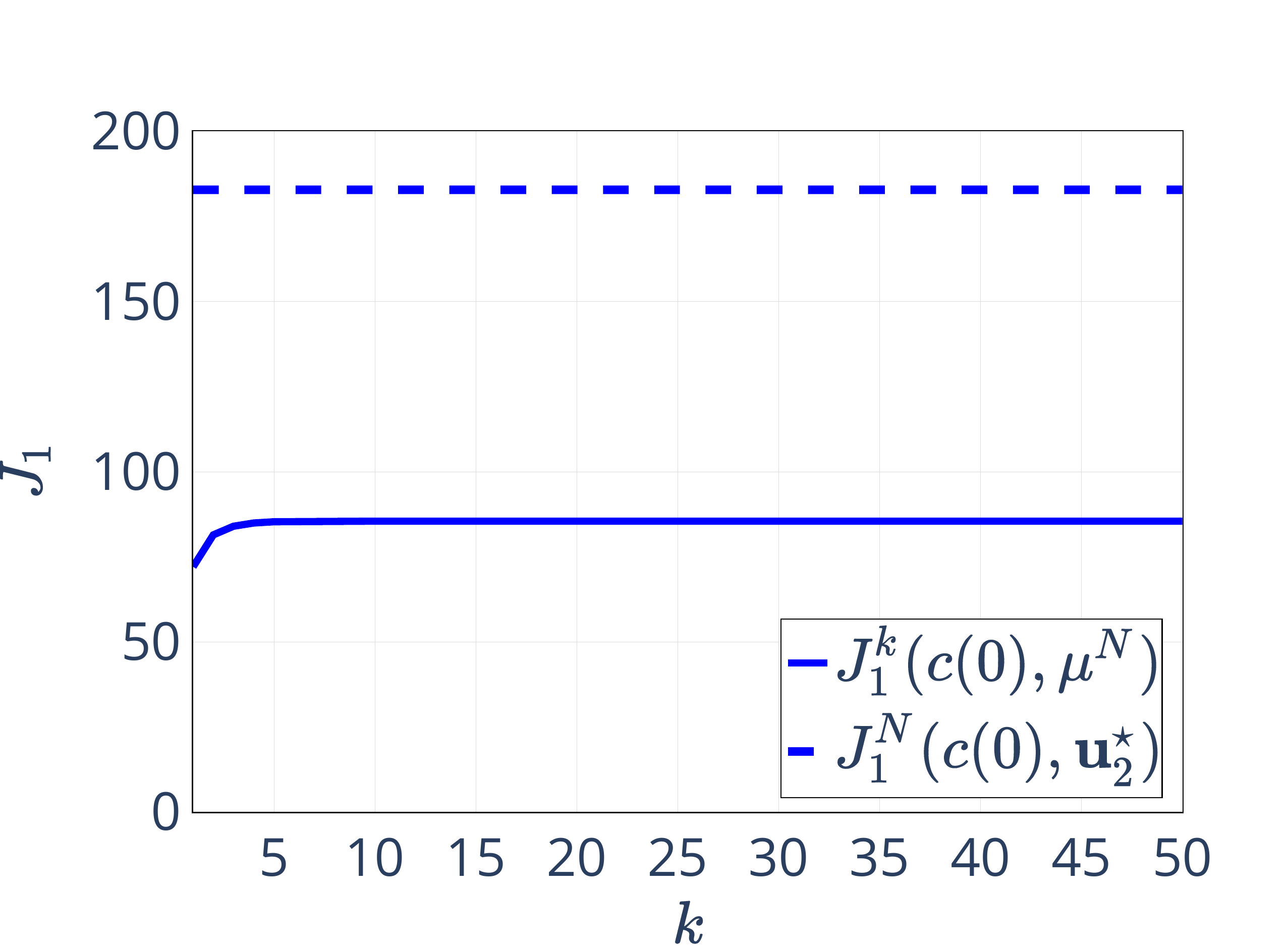}
	\end{minipage}
	\caption{Choice of the efficient solution and the corresponding costs $J_1$}\label{fig: J1pos}
\end{center}
\end{figure}

Since the upper bound on $J_1^\infty(c(0),\mu^N)$ from Theorem \ref{thm: perf, with, 1} depends on the first efficient solution $\ub_{c_0}^{\star}$ in Algorithm \ref{alg: modMOMPC terminal}, we have visualized the performance result for different choices of this efficient solution. In Figure \ref{fig: J1pos} on the left side the first nondominated set $\JN_\Pa(c_0)$ is shown with the different choices of the first efficient solution. The red point corresponds to the efficient solution such that $J^5(c_0,\ub_{c_0}^\star)=(76.064,-13.435)$ and the blue point corresponds to $J^5(c_0,\ub_{c_0}^\star)=(182.852,-26.267)$. Further, the performance of the first cost criterion $J_1$  for $N=5$ is visualized in dependence of the iteration step $k$ and the choice of the first efficient solution (the red line correspond to the red efficient solution and the blue line respectively to the blue one). The dashed lines mark the upper bounds derived by the values of the first objective function for the chosen first efficient solution $J_1^N(c(0),\ub_{c_0}^{\star})$. Hence, we remark that the choice of the first efficient solution has a big impact on the upper bound 
and 
on the performance of $J_1$.

By choosing the efficient solution $\ub_1^\star$ (red point), which has a relatively small value in the first cost functional, we get an upper bound of about $76$. In contrast, the efficient solution $\ub_2^\star$ (blue point) with small value in the second cost delivers an upper bound of approximately $182$. Moreover, we observe that for both efficient solutions the values of the cost functional $J_1$ reach a small neighborhood of their stationary values 53 (red) and 86 (blue), respectively, after less than 10 iteration steps. Additionally, the theoretical upper bound, which depends on the choice of the first efficient solution and is visualized as a dashed line, is adhered as expected. Thus, we can confirm the dependence of the performance of $J_1$ on the choice of the first efficient solution.

The last result shown for Algorithm \ref{alg: modMOMPC terminal} is the asymptotic stability property of the equilibrium $(c^{e},u^{e})$. In contrast to the convergence, the condition for stability depends on the initial value. For this reason, we have to ensure that inequality \eqref{eq: Jbound} is verified for the initial value $c(0)=c_0=(c_A(0),c_B(0))$ and the corresponding first efficient solution $\ub_{c_0}^{\star}$. With a suitable choice of the first efficient solution we can ensure the existence of $\gamma_J\in\K_\infty$ such that inequality \eqref{eq: Jbound} holds, since $\ell_1(c,u)$ is a quadratic function and the system is exponentially controllable to $c^e$.
\begin{figure}[h]
\begin{center}
	\includegraphics[scale=0.3]{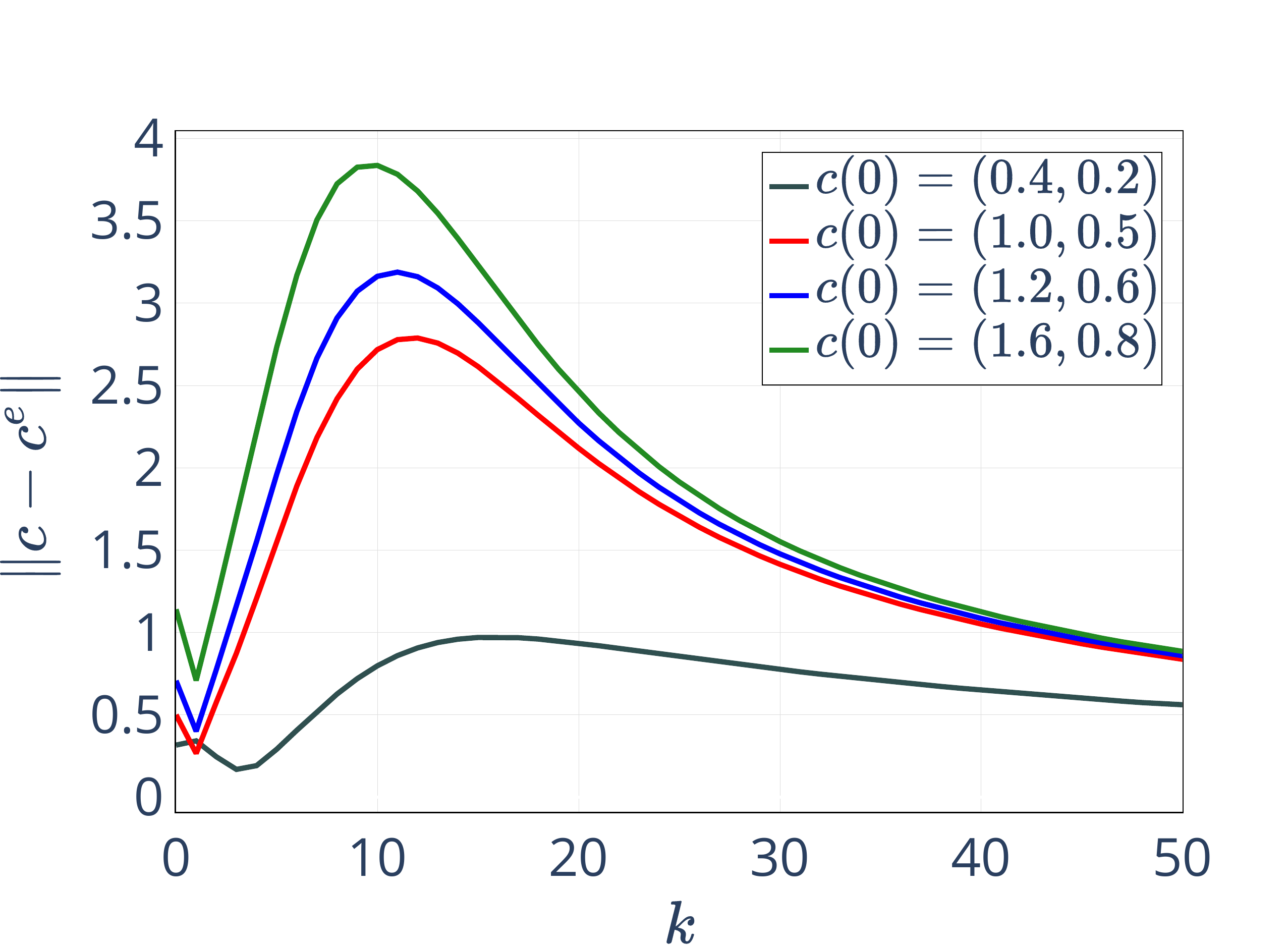}
	\caption{$\norm{c(k)-c^{e}}_2$ for different initial values $c(0)$}\label{fig: trastab}
\end{center}
\end{figure}

In Figure \ref{fig: trastab} the Euclidean norm $\norm{c(k)-c^{e}}_2$ of the closed-loop trajectory is visualized for fixed MPC-horizon $N = 15$ in dependence of the iteration step $k$ and for different initial values $c(0)$. There, we observe that the closer the initial value is to the equilibrium the smaller is the peak of the norm of the trajectory. The numerical results indicate that the stability result from Theorem \ref{thm: as_stab} holds for this example.
\end{bsp}

In the next example we continue with the reactor example, but now we consider Algorithm \ref{alg: modMOMPC2} and check the stronger assumptions that we will need to apply Theorem \ref{thm: perf}.
\begin{bsp}[Reactor Part 2]\label{ex: reactor2}
We consider again the isothermal reactor described in Example \ref{ex: reactor1} with the same constants and constraints. Now, we like to illustrate the results of the performance on the second cost criterion $J_2$. Therefore, we need to consider Algorithm \ref{alg: modMOMPC2} where inequality \eqref{ineq: mpc_constr} holds for all cost criteria. Since we have imposed the special case of an endpoint constraint $\X_0=\{(c^{e},u^{e})\}$ the endpoint is fixed by $c(N) = (c_A(N),c_B(N)) = (c_A^{e},c_B^{e})$ and Assumption \ref{ass: kappabound} is trivially satisfied for $\kappa = u^{e}$. Thus, we can conclude the existence of $\delta \in\LL$ for which the performance estimate on $J_2$ according to Theorem \ref{thm: perf} holds.

For $N=5$, numerical test show that for $\delta(5)=1/5$ the inequality 
\[ J_2^k(c(0),\mu^5) \le J_2^N(c(0),\ub_{c(0)}^{\star}) + (k-5)\ell_2(c^{e},u^{e}) + \frac k 5=:\mathcal{M}(\ub_{c_0}^{\star},5,k)\]
holds for $k\geq 5$ large enough. The second cost $J_2$ and the corresponding bound $\mathcal{M}(\ub_{c_0}^{\star},5,k)$ are visualized in Figure \ref{fig: J2bound}. For other MPC-horizons $N$ and other choices of the first efficient solution it is not that easy to find appropriate values for the $\LL-$function $\delta$. For this reason, we only visualize the bound $\mathcal{M}$ for this special case in Figure \ref{fig: J2bound}.
\begin{figure}[h]
\begin{center}
	\begin{minipage}{0.48\textwidth}
		\includegraphics[width =\textwidth]{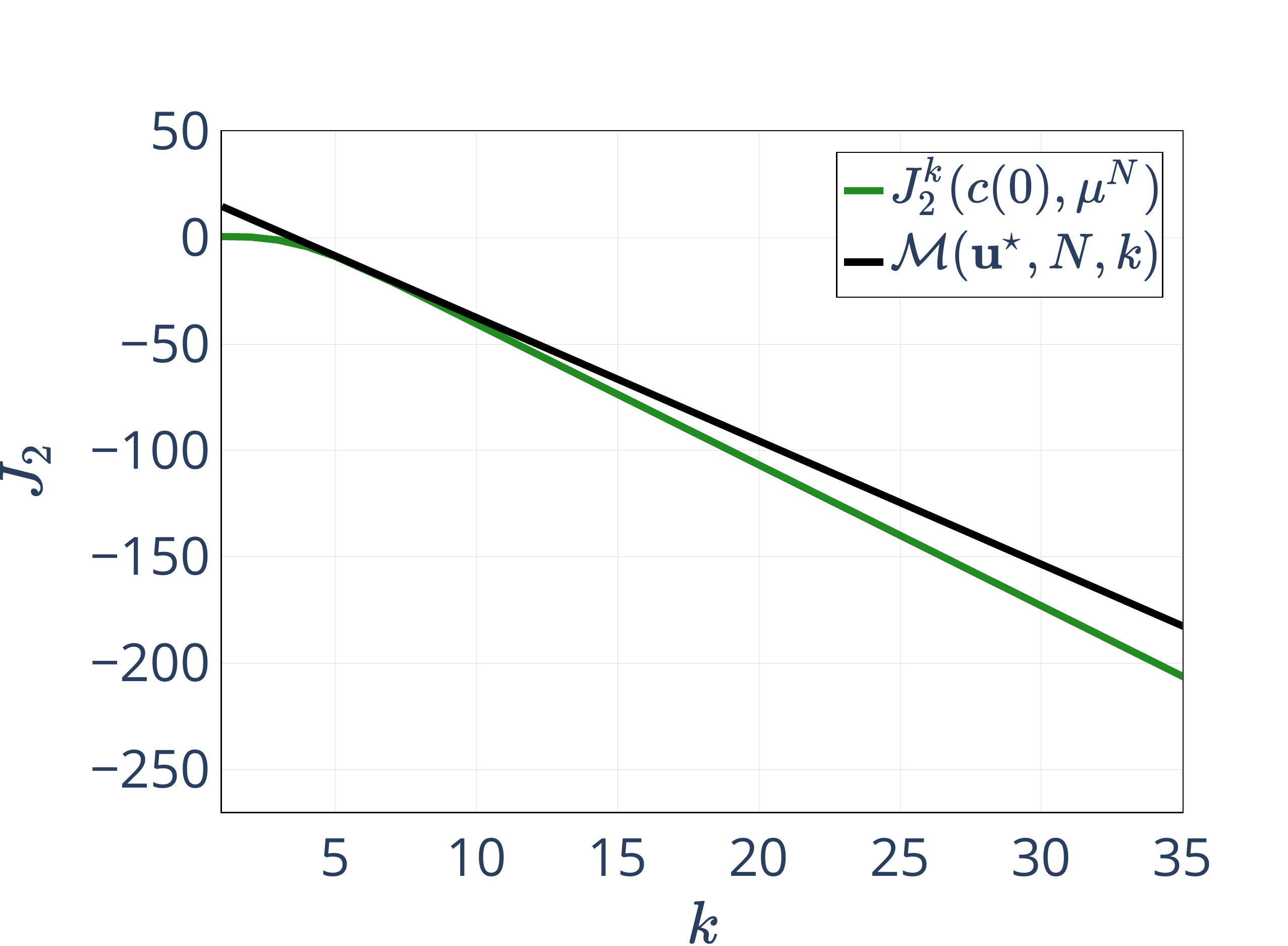}
		\caption{$J_2^k(c(0),\mu^N)$ with corresponding bound $\mathcal{M}(\ub_{c_0}^{\star},5,k)$}\label{fig: J2bound}
	\end{minipage}
	\quad
	\begin{minipage}{0.48\textwidth}
		\includegraphics[width =\textwidth]{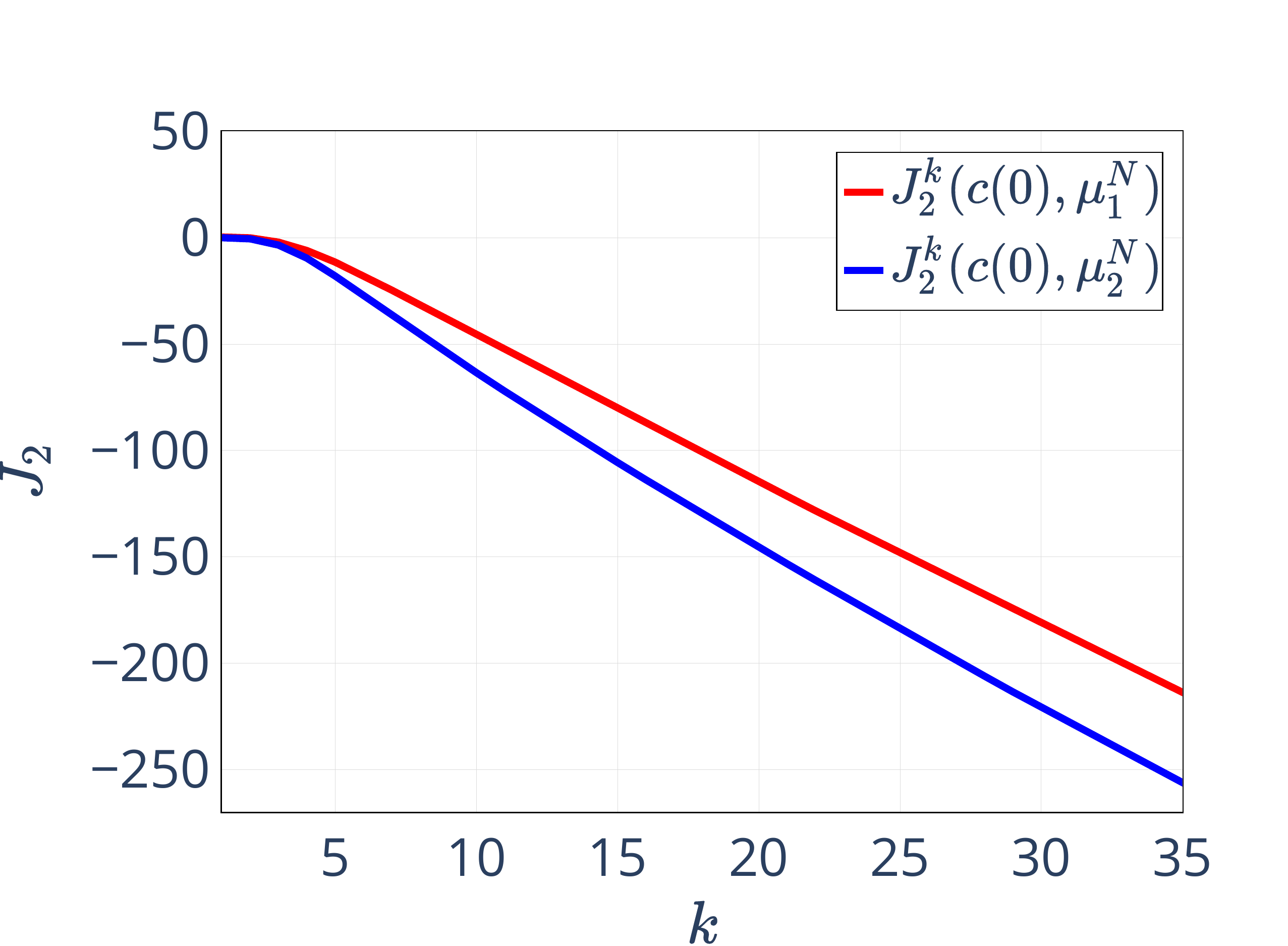}
		\caption{$J_2^k(c(0),\mu^N)$ for different efficient solutions}\label{fig: J2N5}
	\end{minipage}
\end{center}
\end{figure}

In Figure \ref{fig: J2N5} the performance of the cost criterion $J_2$ is visualized for MPC-horizon $N= 5$ and for different choices of the first efficient solution $\ub_{c(0)}^{\star,N}$. The first efficient solutions are chosen as in Example \ref{ex: reactor1} in Figure \ref{fig: J1pos} on the left side. Note that the first nondominated set $\JN_\Pa(c(0))$ is identical for both algorithms. Thus, the efficient solution (and the colors) are the same as in the previous simulations. Again, we remark that the choice of the first efficient solution has an impact on the performance of the second cost criterion $J_2$.
\end{bsp}

After verifying the theoretical results from Section \ref{sec: performance} and \ref{sec: stability} by means of numerical simulations for the isothermal reactor with two cost functions, we now like to illustrate that---as the theoretical results suggest---our approach also works for more than two cost criteria. 
To this end, we add another cost criterion to the multiobjective optimal control problem  \eqref{reactor} and present the numerical results in the same manner as in Example \ref{ex: reactor1} and \ref{ex: reactor2}.
\begin{bsp}[Reactor with three objectives]
We consider the isothermal reactor from Example \ref{ex: reactor1} and the corresponding multiobjective optimal control problem   \eqref{reactor}. In order to extend the example we add a third cost function given by
\[J_3^N(c_0,\ub):=\sum_{k=0}^{N-1}\ell_3(c(k,c_0),u(k)),\; \text{with }\ell_3(c,u)=u^2,\]
i.e., we now aim to minimize $J^N(c_0,\ub)=(J_1^N(c_0,\ub),J_2^N(c_0,\ub),J_3^N(c_0,\ub))$. 	Stage cost $\ell_3$ is a continuous function and, thus, Assumption \ref{ass: modTerminal} is satisfied. 
Assumptions \ref{ass: cont1} and \ref{ass: kappabound} can be shown exactly as in Example \ref{ex: reactor1} and \ref{ex: reactor2}.  For our numerical simulations we consider the MPC-horizon $N=15$. As in Example \ref{ex: reactor2}, we apply Algorithm \ref{alg: modMOMPC2} to illustrate the trajectory convergence, the averaged performance of all cost criteria $J_i$, $i=1,2,3$, and, especially, the non-averaged performance of $J_i$ for all $i\in\{1,2,3\}$. Further, we chose the first efficient solution such that $J^{15}(c_0,\ub_{c_0}^\star)=(317.827,-380.092,1969.311)$.
\begin{figure}[h]
\begin{center}
	\includegraphics[scale=0.3]{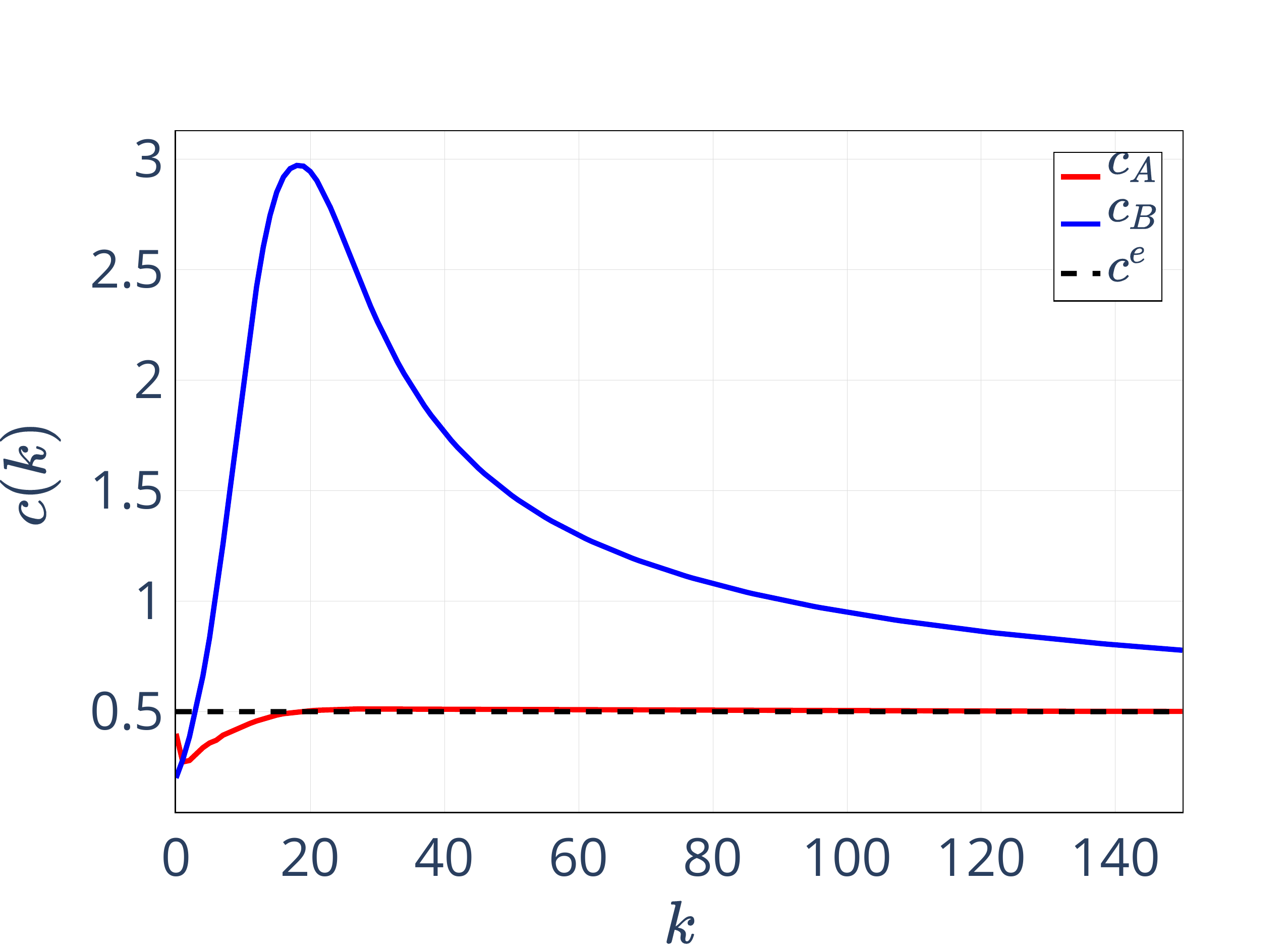}
	\caption{Closed-loop trajectory}\label{fig: traj_3}
\end{center}
\end{figure}

In Figure \ref{fig: traj_3} we observe that the closed-loop trajectory behaves qualitatively as in Figure \ref{fig: tra} but quantitatively there are differences. Especially, the peak of the second component $c_B$ is higher than in Example \ref{ex: reactor1}. Further, the averaged cost $\bar J_1$ has a smaller start value and the amplitude of the second averaged cost $\bar J_2$ is larger than in the previous example. These phenomenons are visualized in Figure \ref{fig: avCost3}. Especially, on the right side in Figure \ref{fig: avCost3} we remark that the third averaged cost $\bar J_3$ also converges from below to the theoretical bound $\ell_3\equ=144$ as stated in Theorem \ref{thm: avPerfJi}.
\begin{figure}[h]
\begin{center}
	\begin{minipage}{0.328\textwidth}
		\centering
		\includegraphics[width =\textwidth]{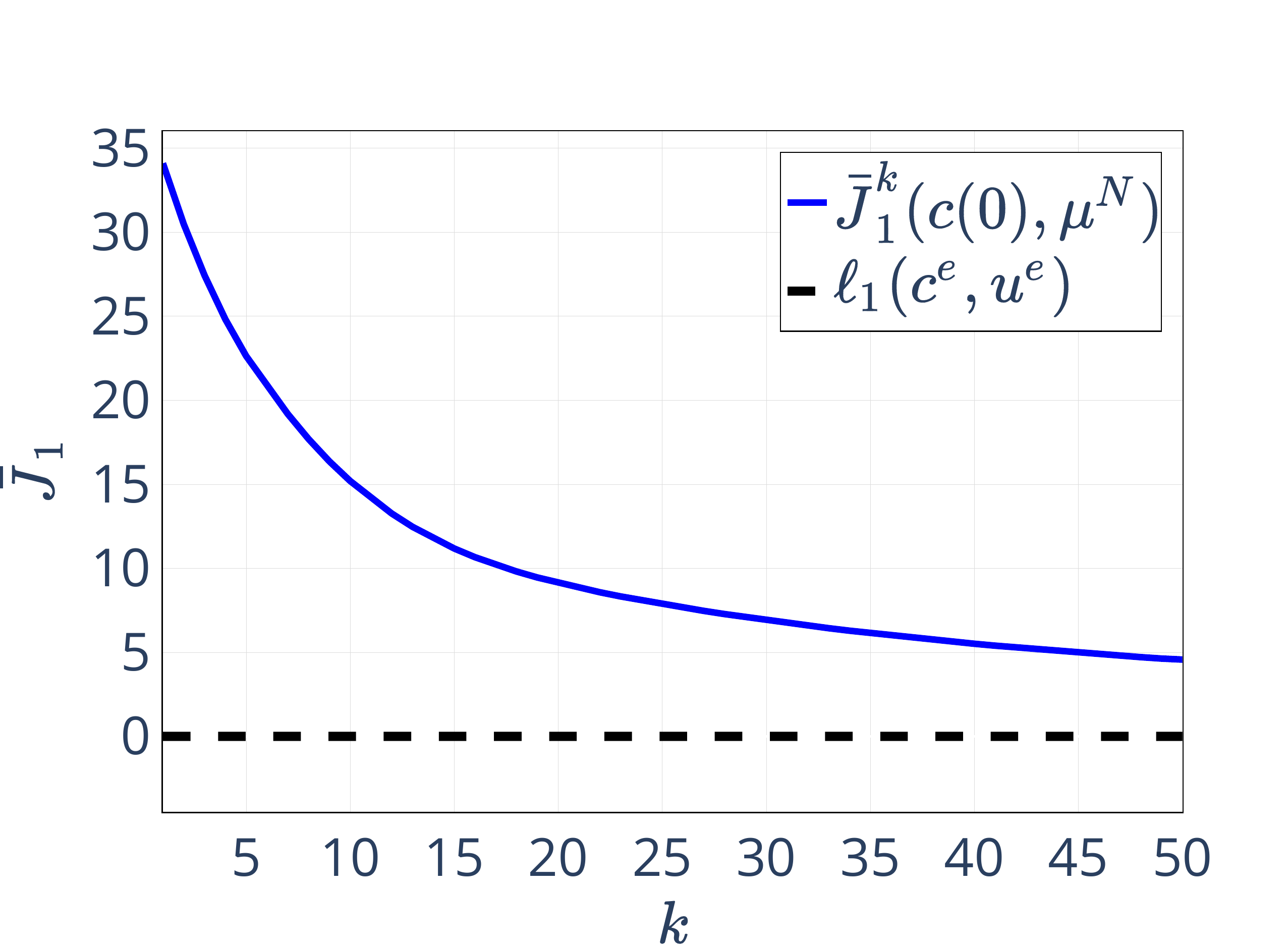}\\
		(a) $\bar J_1^k(c_0,\mu^N)$
	\end{minipage}
	\begin{minipage}{0.328\textwidth}
		\centering
		\includegraphics[width =\textwidth]{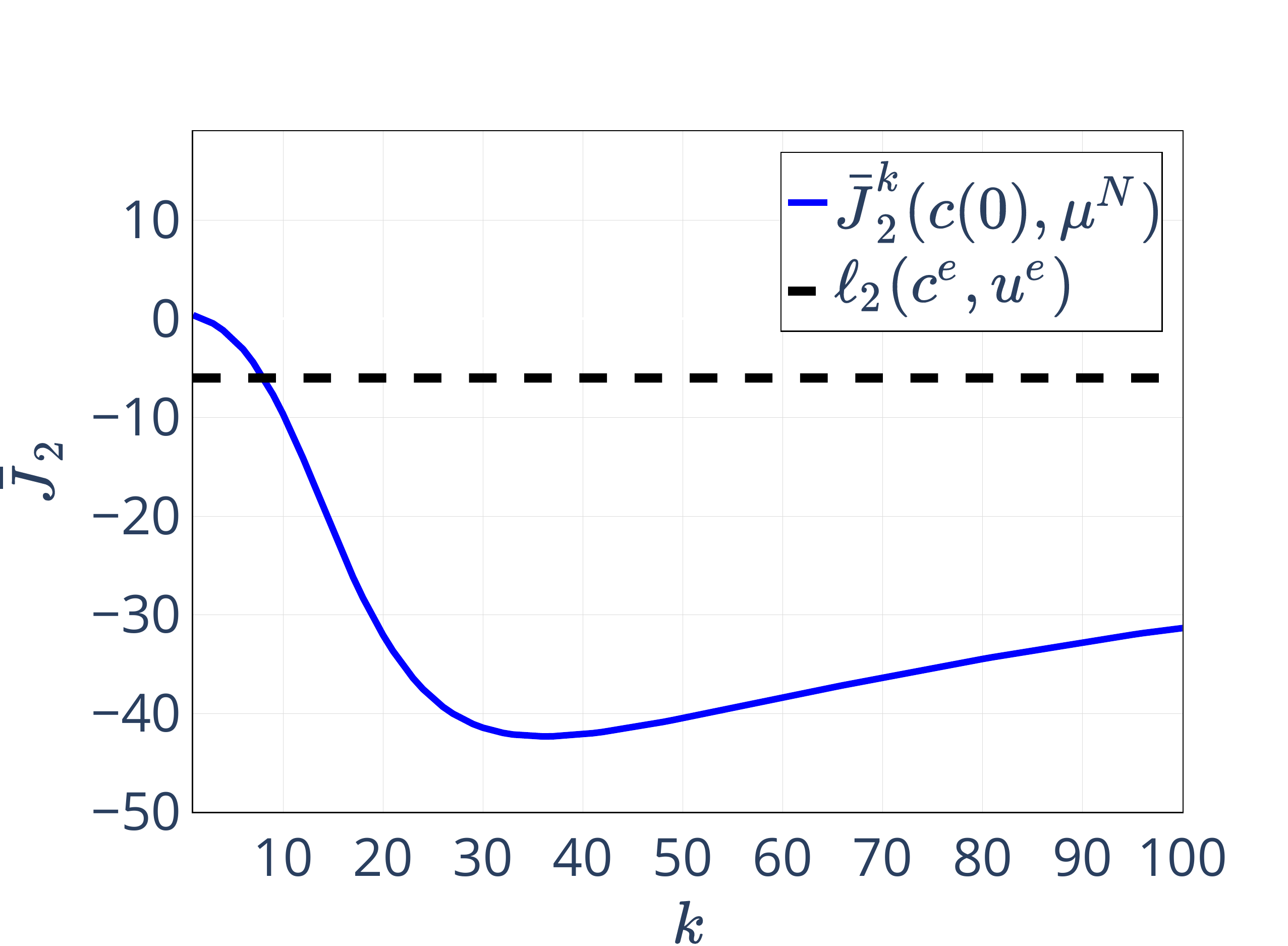}\\
		(b) $\bar J_2^k(c_0,\mu^N)$
	\end{minipage}
	\begin{minipage}{0.328\textwidth}
		\centering
		\includegraphics[width =\textwidth]{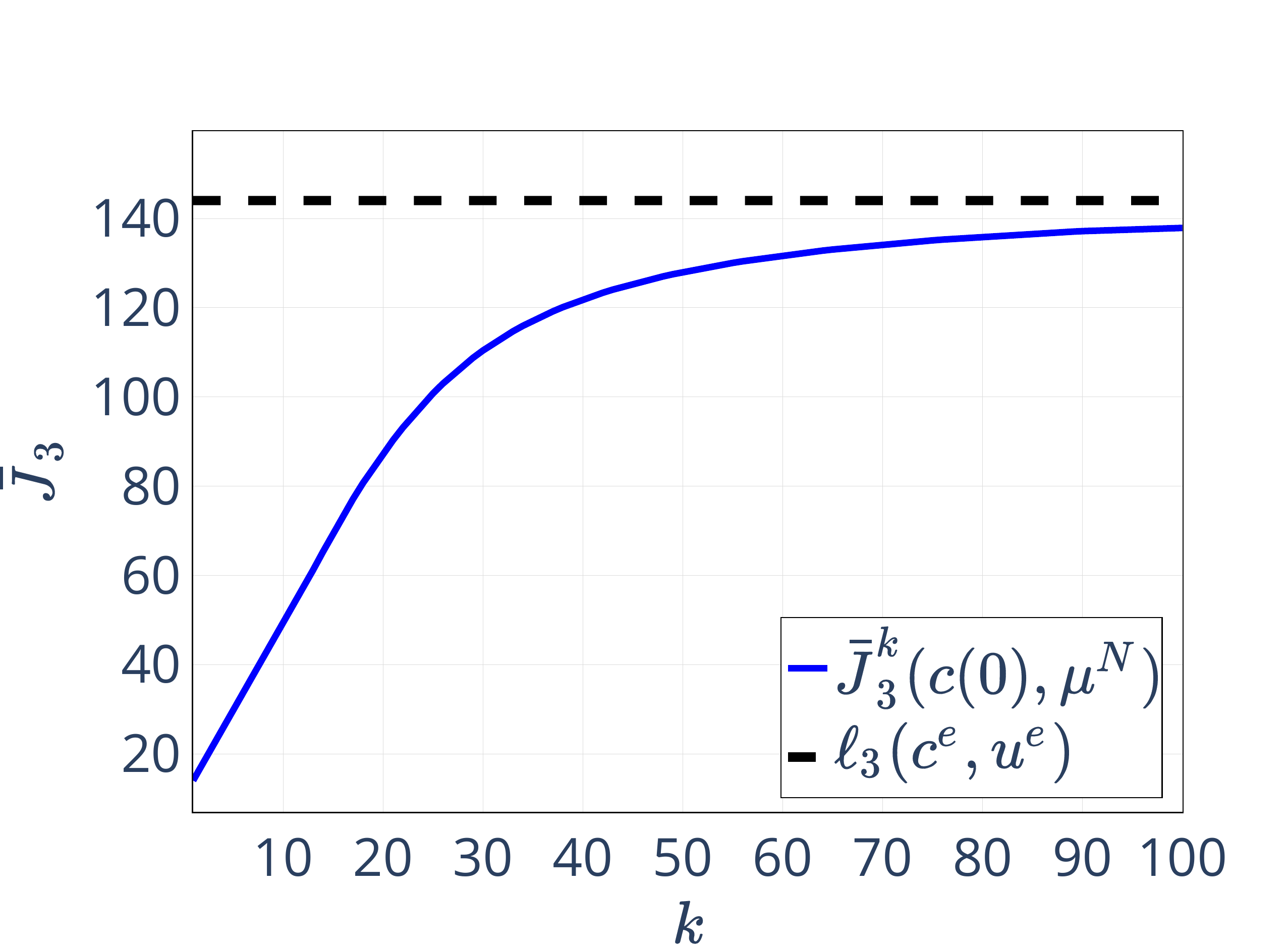}\\
		(c) $\bar J_3^k(c_0,\mu^N)$
	\end{minipage}
	\caption{Averaged Performance of all cost criteria $J_i$}\label{fig: avCost3}
\end{center}
\end{figure}
\begin{figure}[h]
\begin{center}
	\begin{minipage}{0.328\textwidth}
		\centering
		\includegraphics[width =\textwidth]{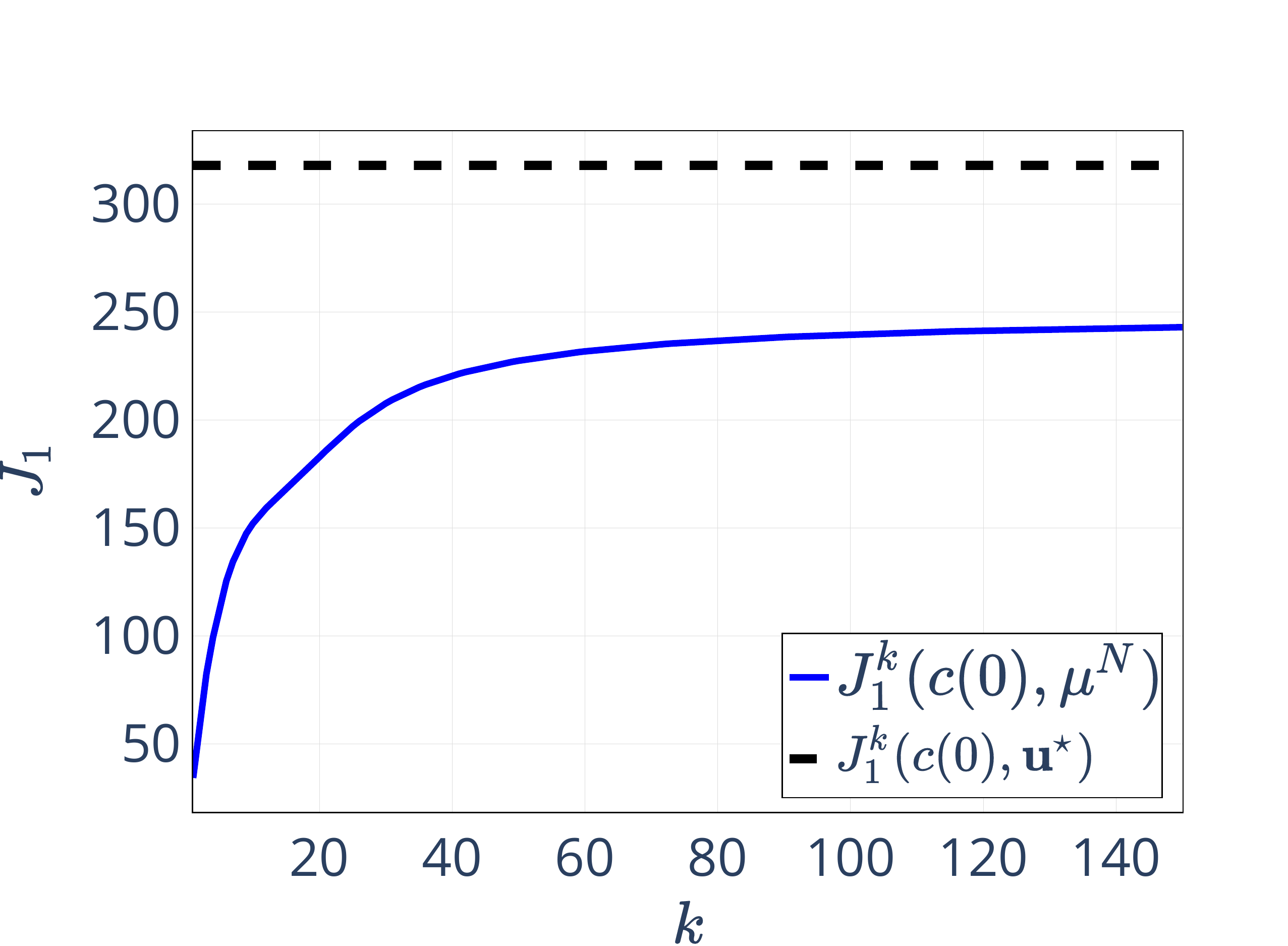}\\
		(a) $J_1^k(c_0,\mu^N)$
	\end{minipage}
	\begin{minipage}{0.328\textwidth}
		\centering
		\includegraphics[width =\textwidth]{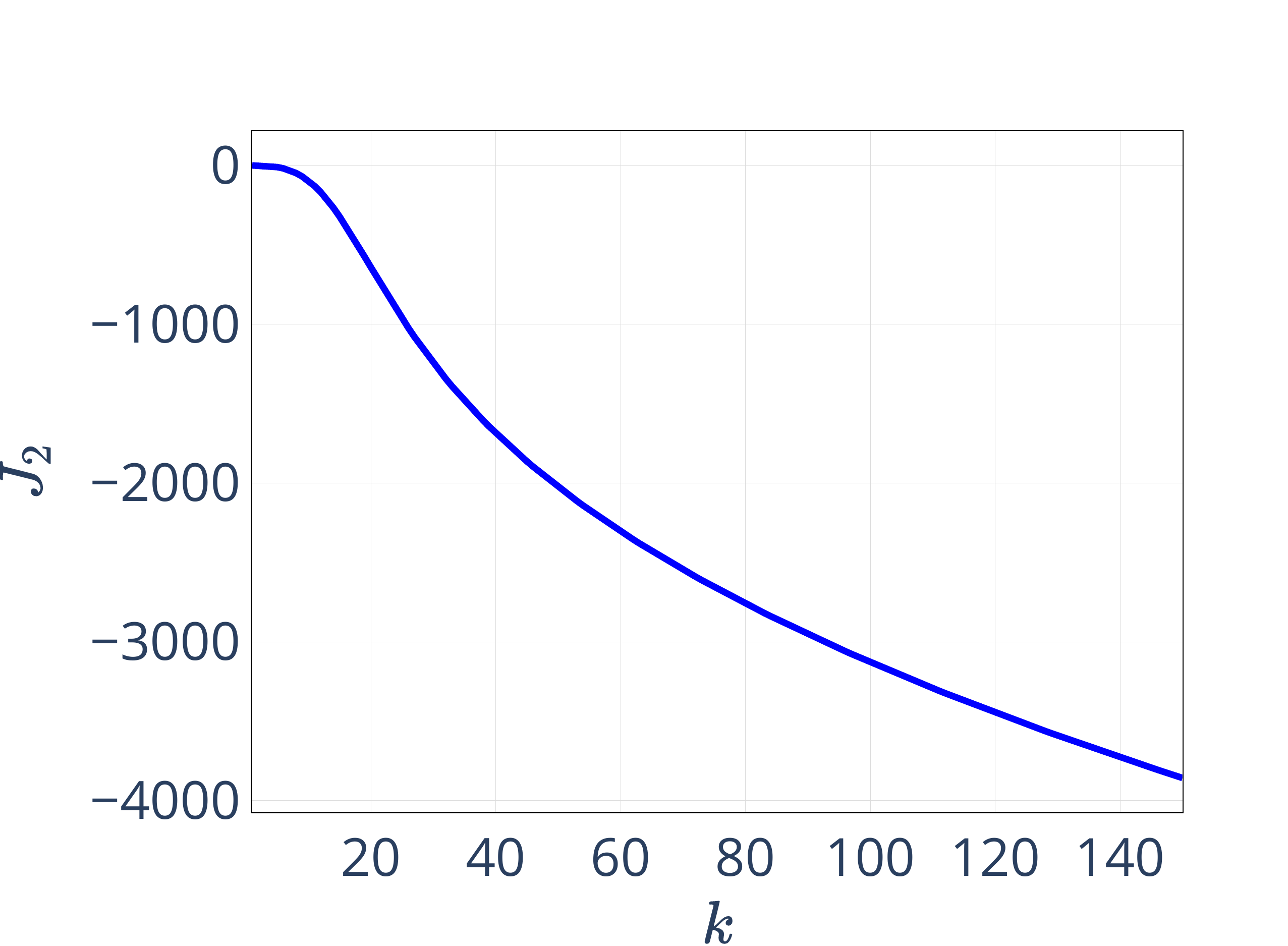}\\
		(b) $J_2^k(c_0,\mu^N)$
	\end{minipage}
	\begin{minipage}{0.328\textwidth}
		\centering
		\includegraphics[width =\textwidth]{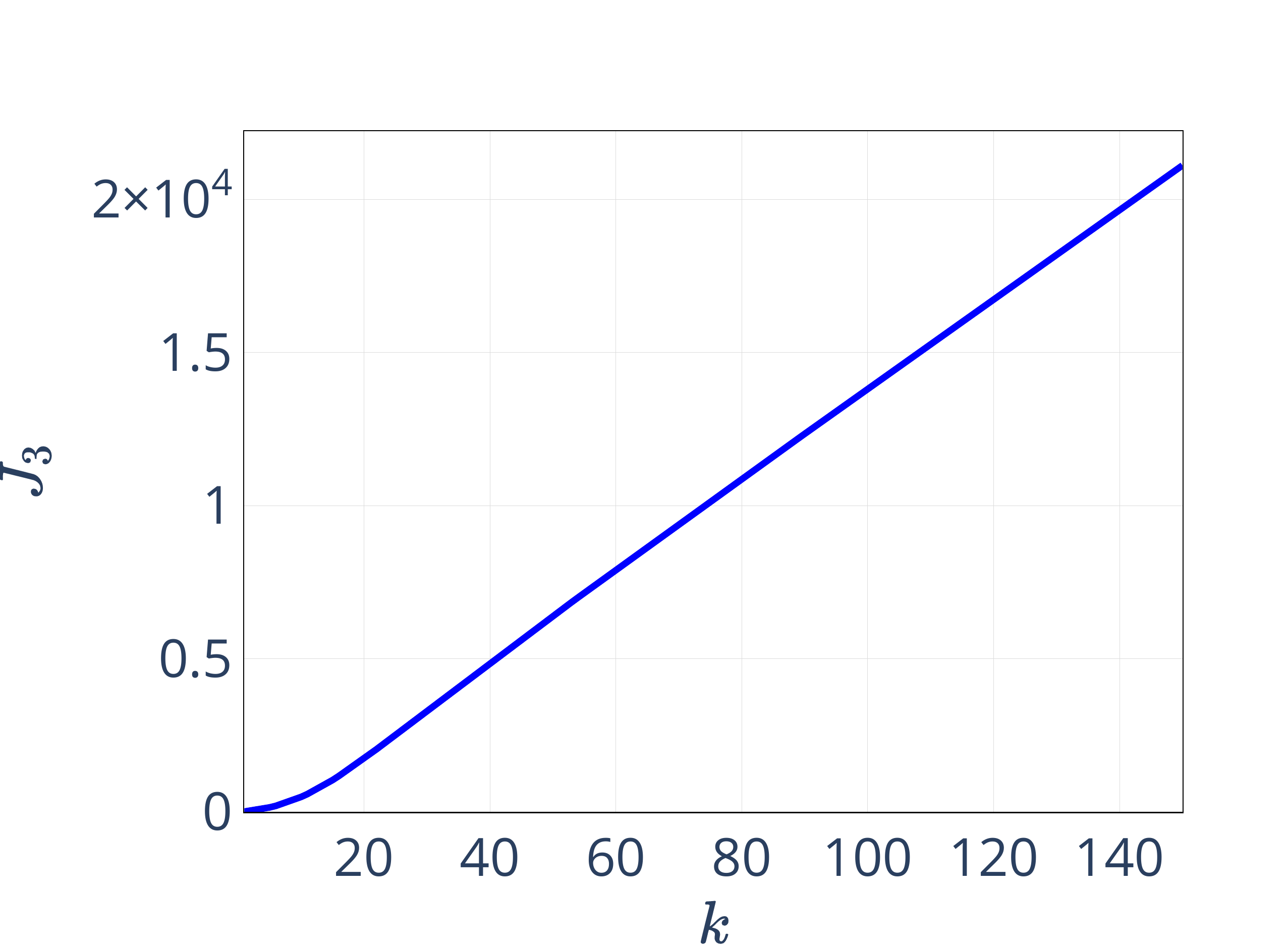}\\
		(c) $J_3^k(c_0,\mu^N)$
	\end{minipage}
	\caption{Performance of all cost criteria $J_i$}\label{fig: Cost3}
\end{center}
\end{figure}

Further, in Figure \ref{fig: Cost3} the performances of $J_i$, $i=1,2,3$, are shown. Again, the first cost function $J_1$ complies with the theoretical bound $J_1^N(c_0,\ub^{\star}_{c_0})$. For the second and third cost criterion we can observe that the performance behaves as expected. The third cost $J_3$ is strictly increasing since in each iteration the squared value of the cost in the equilibrium $u^{e}=12$ is added.

\end{bsp}

In the examples in this section we have illustrated our theoretical results numerically, particularly regarding the impact of the choice of the \emph{first} efficient solution $\posNxn$, chosen in step (0) of our algorithms. In the next section, we will analyse numerically whether the choice of the \emph{subsequent} efficient solutions $\posNxk$ in step~(1) of the algorithms, has an impact on the solution behavior and on the performance.

\section{Selection rules for subsequent efficient solutions}\label{sec:numerics2}

In this section we will investigate numerically the impact of different selection rules for the subsequent efficient solutions, i.e., those in step (1) of the algorithms. We will do this for Algorithm \ref{alg: modMOMPC2}, since we want to guarantee the theoretical performance estimates for all cost criteria. Closely related is the development of the nondominated sets during the iterations. In all simulations, we consider the same first efficient solution and, thus, determine the performance of the MO MPC algorithm only depending on the choice of the subsequent efficient solutions. In order to describe the expected effects, we first recall the resulting bounds on the nondominated set of Algorithm \ref{alg: modMOMPC2}. For this reason, we visualized the nondominated set of the isothermal reactor with two objectives, see Example \ref{ex: reactor2}, in the second iteration in Figure \ref{fig: reactorBound}.
The approximation of this nondominated set was calculated with  ASMO \cite{GithubASMO}.
We observe that the resulting nondominated set from which we choose the efficient solution is relatively small and excludes the extremal ends of the nondominated set.
\begin{figure}[h]
\begin{center}
\includegraphics[scale = 0.3]{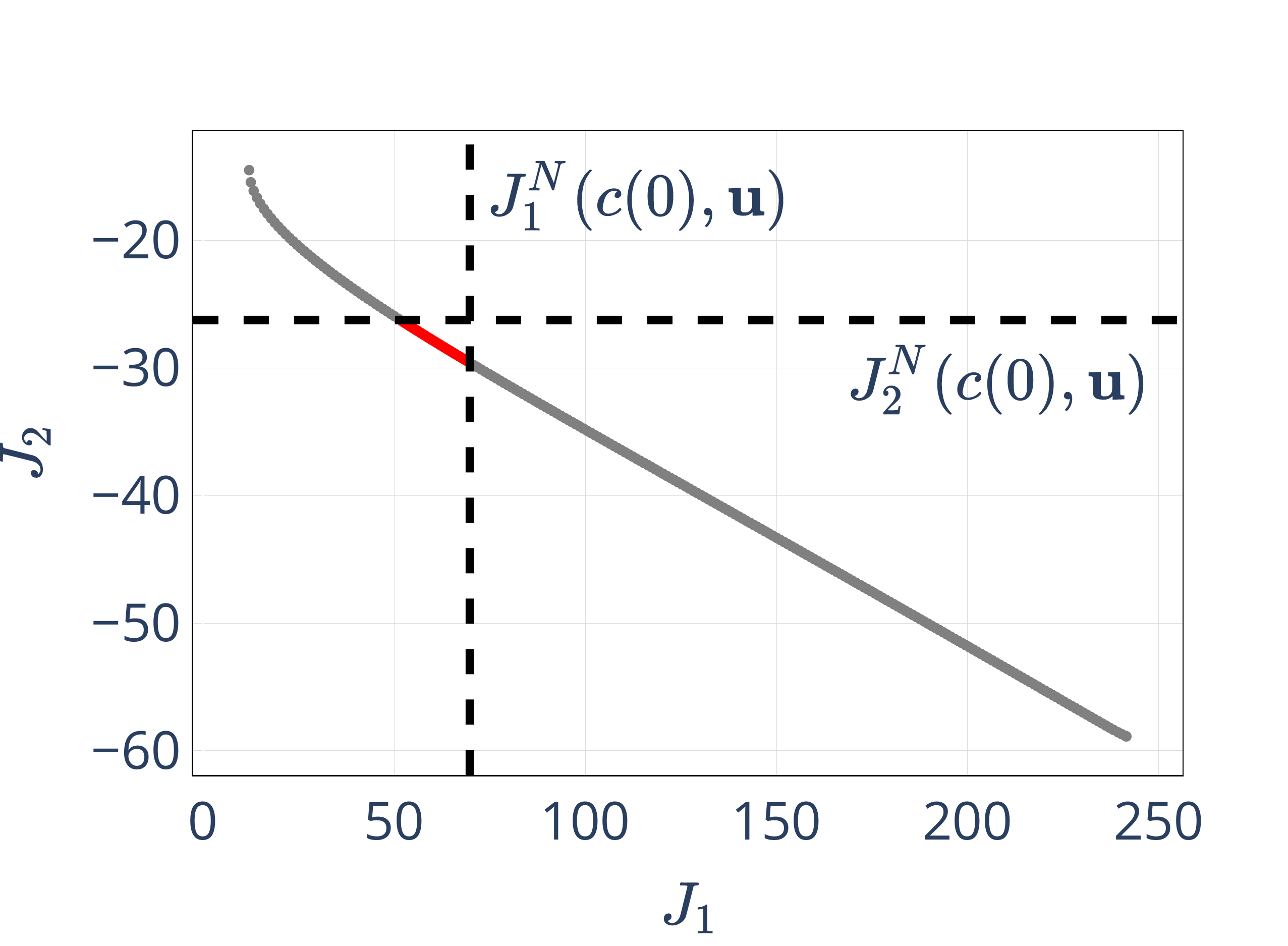}
\caption{Visualization of the resulting nondominated set}\label{fig: reactorBound}
\end{center}
\end{figure}

Nevertheless, there is still a degree of freedom in choosing the subsequent efficient solution. Generally, an efficient solution with values at the top left of the considered nondominated set will cause the first cost criterion to become smaller, and vice versa. Particularly, in our setting, where the first cost is always the stabilizing one, putting a large emphasis on the first cost criterion forces the trajectory to converge faster, since this causes a lower cost. In order to check whether this effect can be seen in practice, we will investigate different selection rules for choosing the subsequent efficient solutions. We examine the influence of the choice of the subsequent efficient solutions in the algorithms on the solution and performance behavior. To this end, we introduce the following selection rules for the subsequent efficient solutions:
\begin{itemize}
\item "ideal": the efficient solutions are computed as in \eqref{eq:defideal} 
as pre-image of  nondominated points   with  minimal Euclidean distance to the ideal point $z^\star$. 
This selection rule is illustrated in Figure \ref{fig: ideal}. 
\item "min 1": the efficient solutions are chosen such that $J_1^N$ (with the additional bounds from \eqref{ineq: mpc_constr}) is minimal.
\item "min 2": the efficient solutions are chosen such that $J_2^N$  (with the additional bounds from \eqref{ineq: mpc_constr})  is minimal.
\end{itemize} 
\begin{figure}[h]
\begin{center}
\includegraphics[scale =0.9]{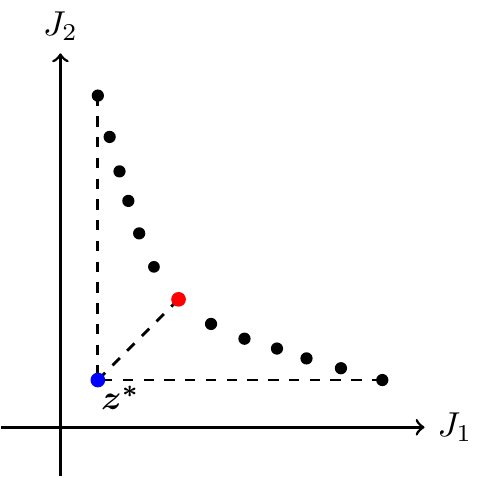}
\caption{Visualization of the selection rule "ideal"}\label{fig: ideal}
\end{center}
\end{figure}
We note that the last two selection criteria only guarantee to find  weakly efficient solutions of the underlying multiobjective optimization problem which are located at the extremal ends of the nondominated set. The set of weakly efficient solutions forms a superset of the set of efficient solutions and contains also those feasible solutions for which just no other feasible solution exists which strictly improves all objective functions at the same time. 
However, in our case the solutions are additionally constrained by the bounds \eqref{ineq: mpc_constr} in Algorithm \ref{alg: modMOMPC2}, which ``cut off'' these extremal points, cf.\ Figure \ref{fig: reactorBound}. Since according to our numerical experience this is the situation in all our numerical tests, we can rule out that our algorithm yields weakly efficient solutions which are not also efficient.


\begin{bsp}[Reactor Part 3]\label{ex: reactor3}
We consider again the isothermal reactor from Example \ref{ex: reactor1} with the same constants and constraints. For the simulation we used Algorithm \ref{alg: modMOMPC2} with the different selection rules described above for choosing the subsequent efficient solution $\ub^{\star}_{c(k)}$. In all simulations we consider the MPC-horizon $N= 5$ and use the same first efficient solution $\ub_{c(0)}^{\star}$ which is chosen as in Example \ref{ex: reactor1}. 
In Figure \ref{fig: pareto} the first and the second nondominated set are shown. 
\begin{figure}[h]
\begin{center}
	\begin{minipage}{0.49\textwidth}
		\includegraphics[width =\textwidth]{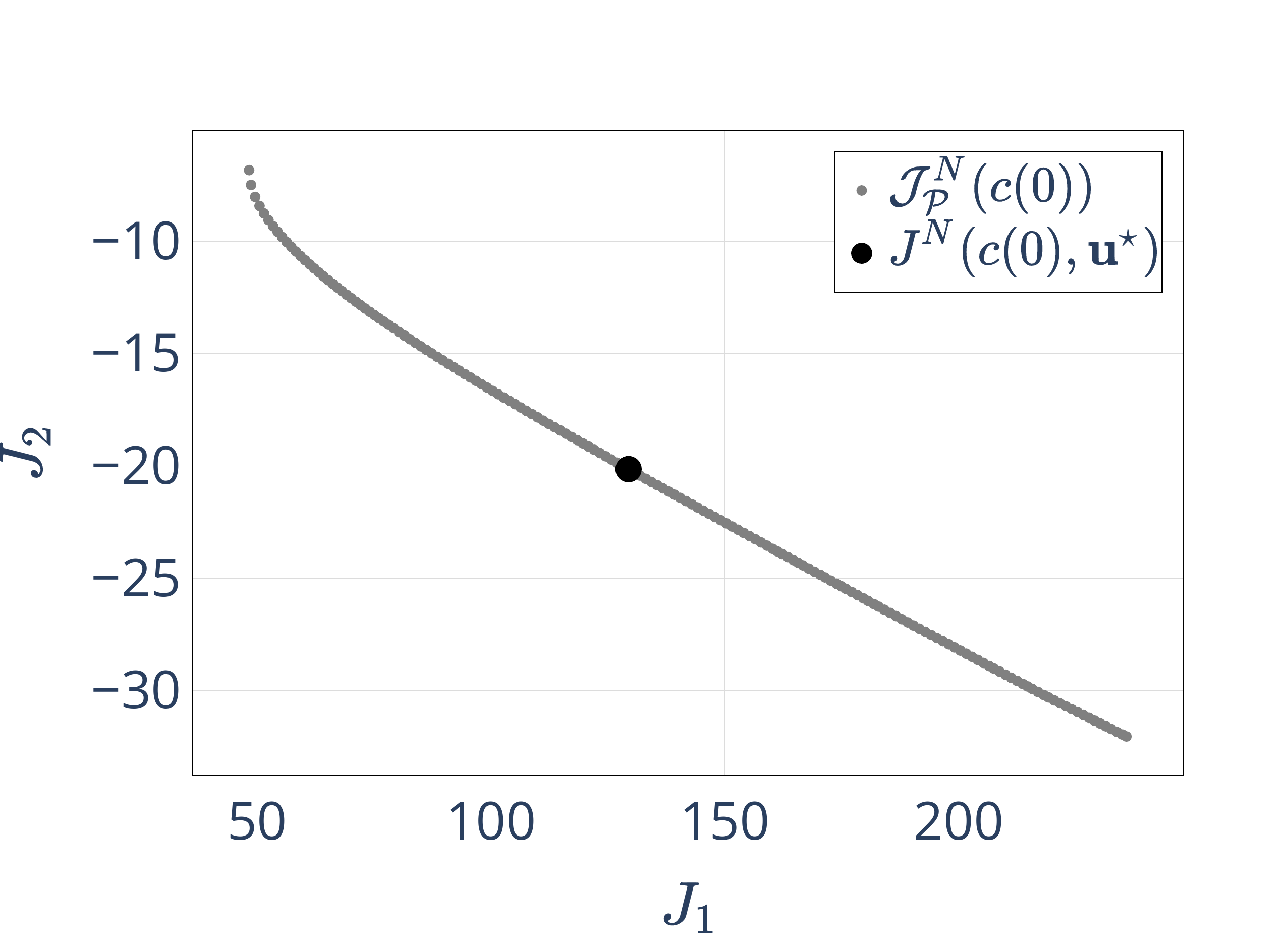}
	\end{minipage}
	\begin{minipage}{0.49\textwidth}
		\includegraphics[width =\textwidth]{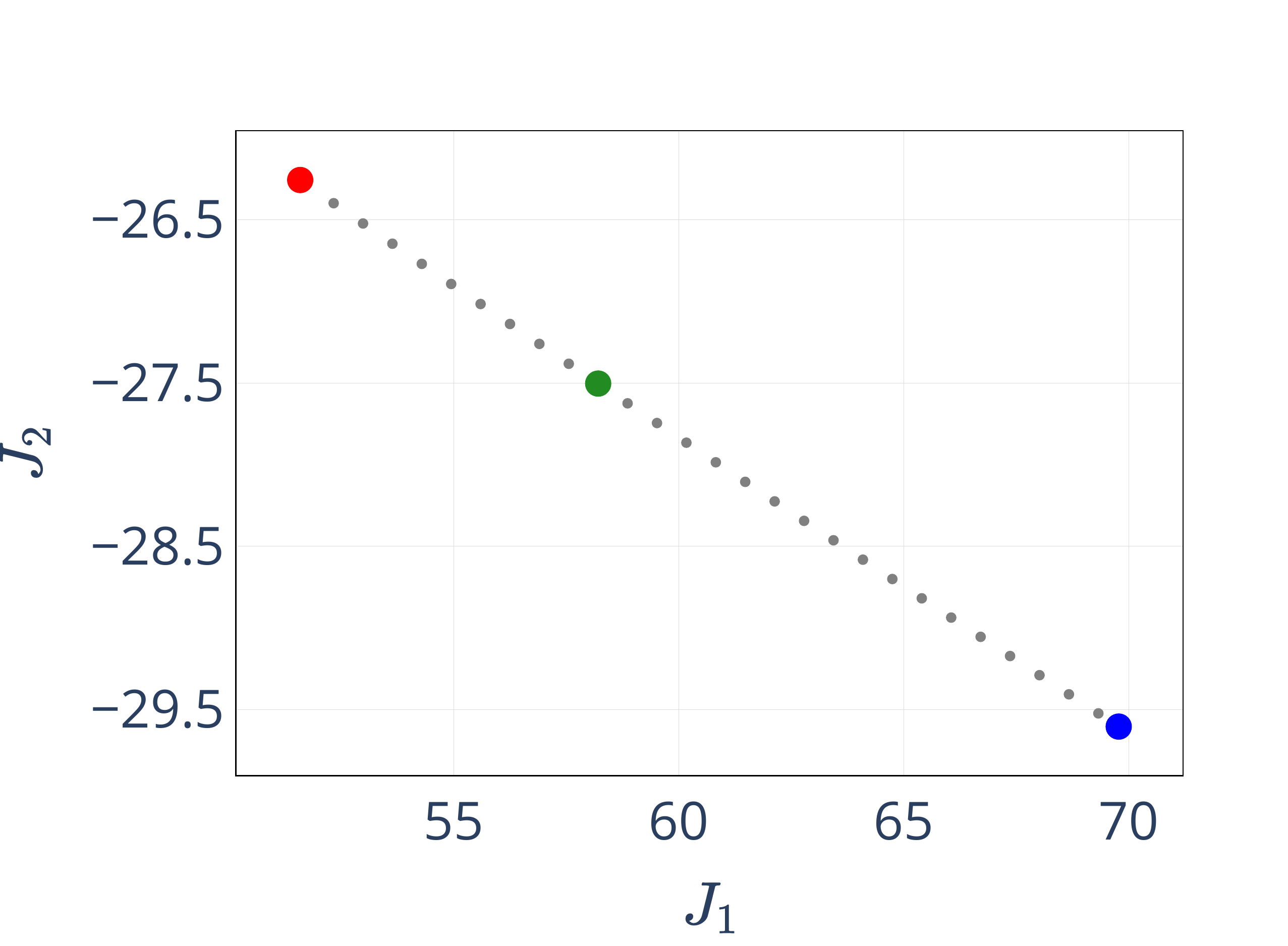}
	\end{minipage}
	\caption{First and second nondominated set with the chosen efficient solutions}\label{fig: pareto}
\end{center}
\end{figure}

The colored points are the efficient solutions chosen according to the respective selection rules. While in all simulations the same first efficient solution is chosen, we compare different selection rules for the subsequent efficient solutions, which are, however, fixed during the iterations. The different selection rules are visualized in Figure \ref{fig: pareto} on the right-hand side. Further, in Figure \ref{fig: parheur} we visualized the nondominated set and the corresponding chosen subsequent efficient solutions in iteration step $k=6$.
\begin{figure}[h]
\begin{center}
	\begin{minipage}{0.328\textwidth}
		\centering
		\includegraphics[width =\textwidth]{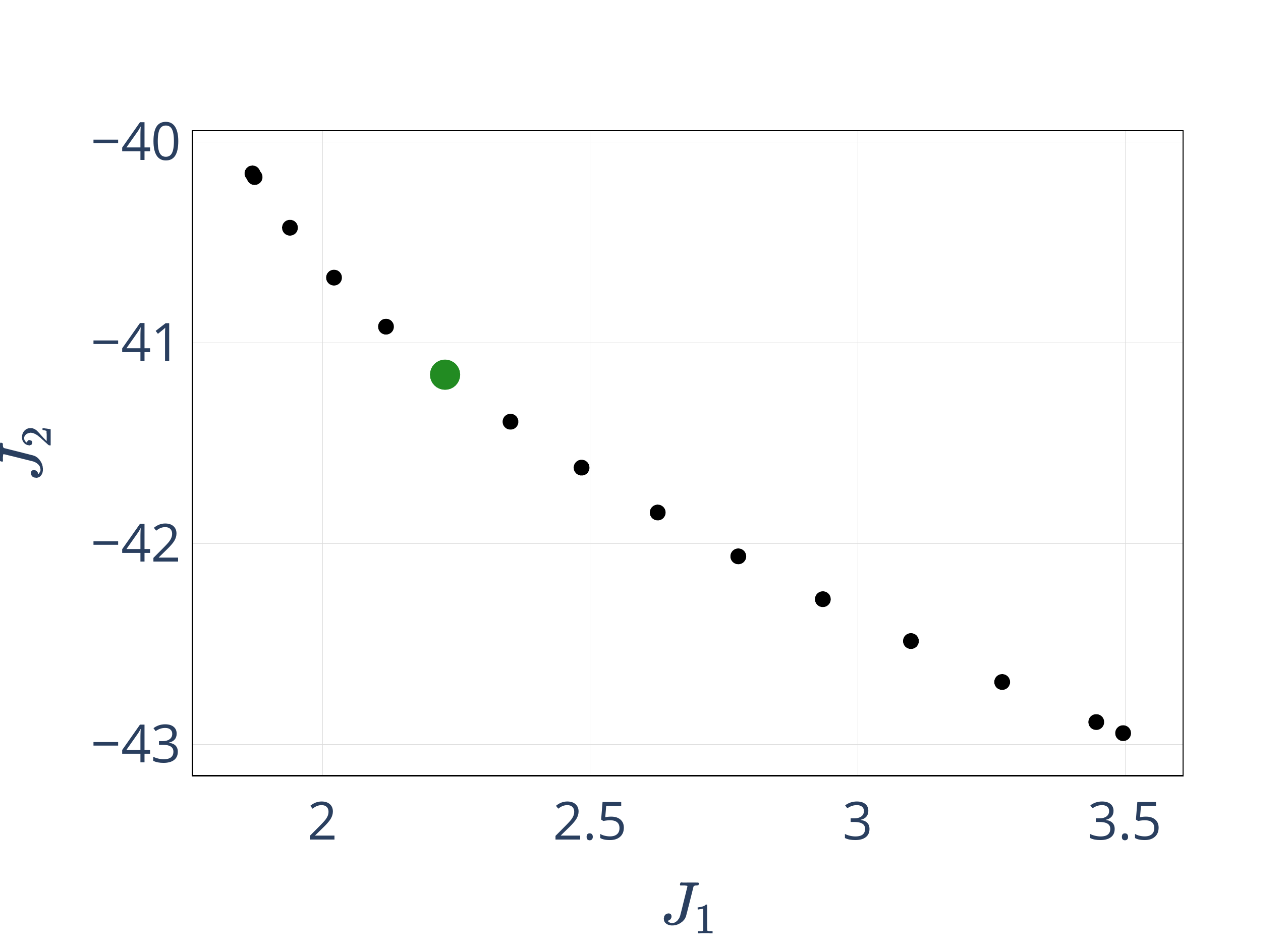}\\
		(a)	"ideal"
	\end{minipage}
	\begin{minipage}{0.328\textwidth}
		\centering
		\includegraphics[width =\textwidth]{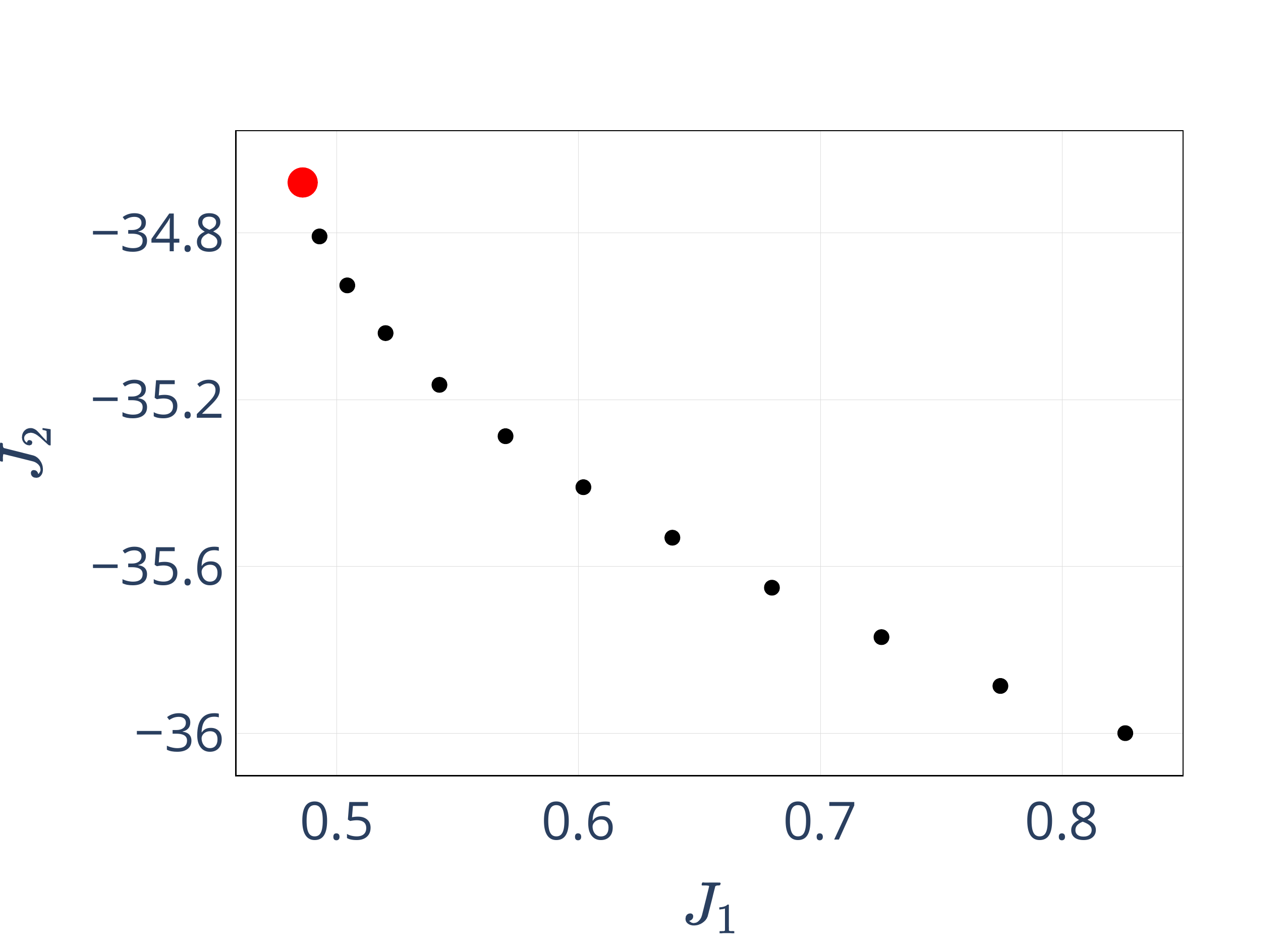}\\
		(b) "min 1"
	\end{minipage}
	\begin{minipage}{0.328\textwidth}
		\centering
		\includegraphics[width =\textwidth]{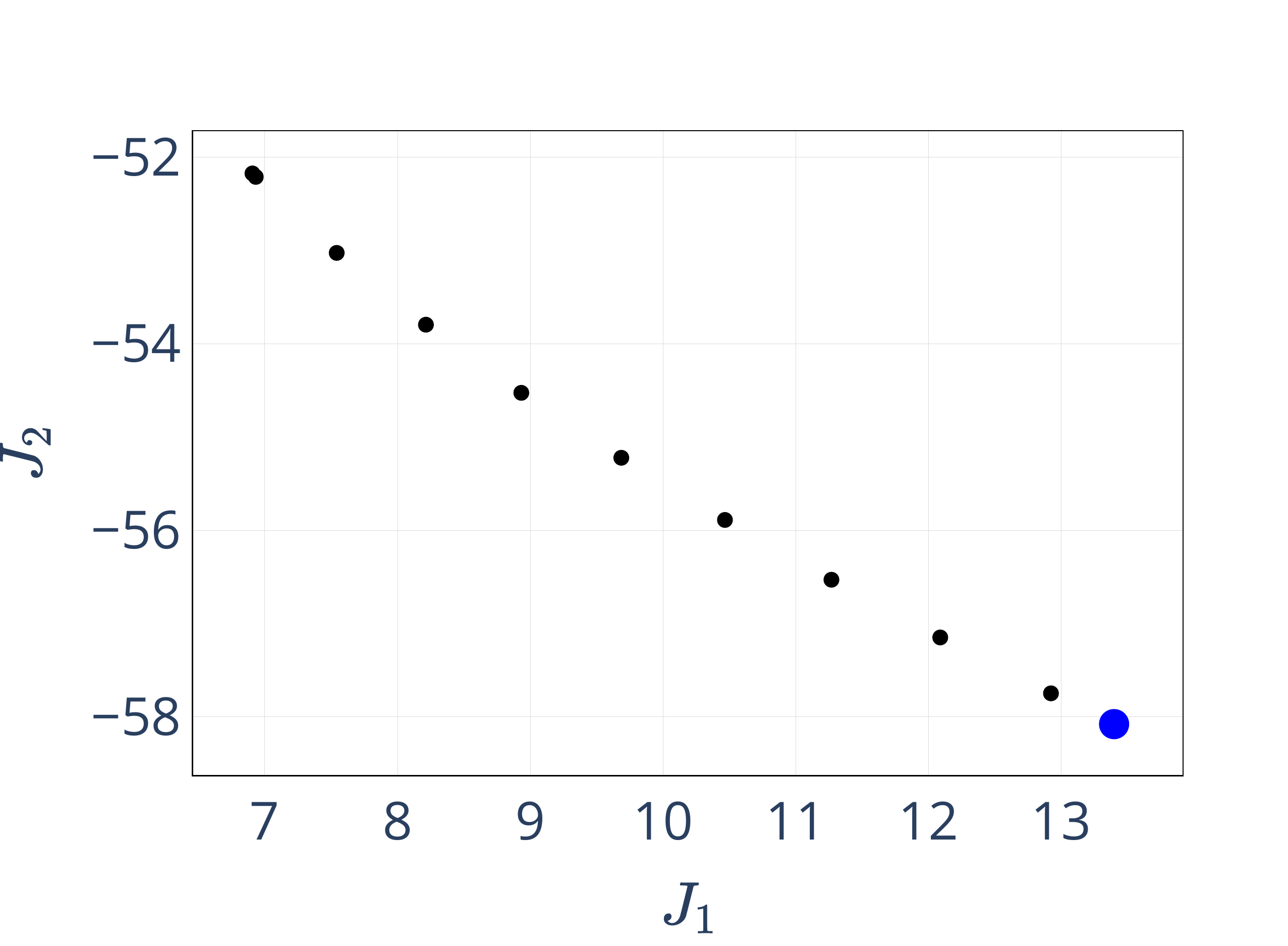}\\
		(c) "min 2"
	\end{minipage}
	\caption{Nondominated set in iteration step $k=6$ for the different selection rules}\label{fig: parheur}
\end{center}
\end{figure}

We remark that the magnitudes of the cost functionals and, thus, the size and the location of the nondominated sets are significantly different for the three selection rules. While the nondominated set for "min 2" (right) has a range from 7 to 13 and from -58 to -52, the nondominated set for "min 1" (mid) is substantially smaller with a range from 0.5 to 0.8 and from -36 to -34.6. The nondominated set for "ideal" (left) lies between those for "min 1" and "min 2". Hence, for each selection rule the subsequent efficient solutions are chosen not only according to different rules, but also from completely different nondominated sets. This indicates that the choice of the efficient solutions should have an impact on convergence rate and performance. Figure \ref{fig: traheu} illustrates the resulting closed-loop trajectories.
\begin{figure}[h]
\begin{center}
	\begin{minipage}{0.49\textwidth}
		\includegraphics[width =\textwidth]{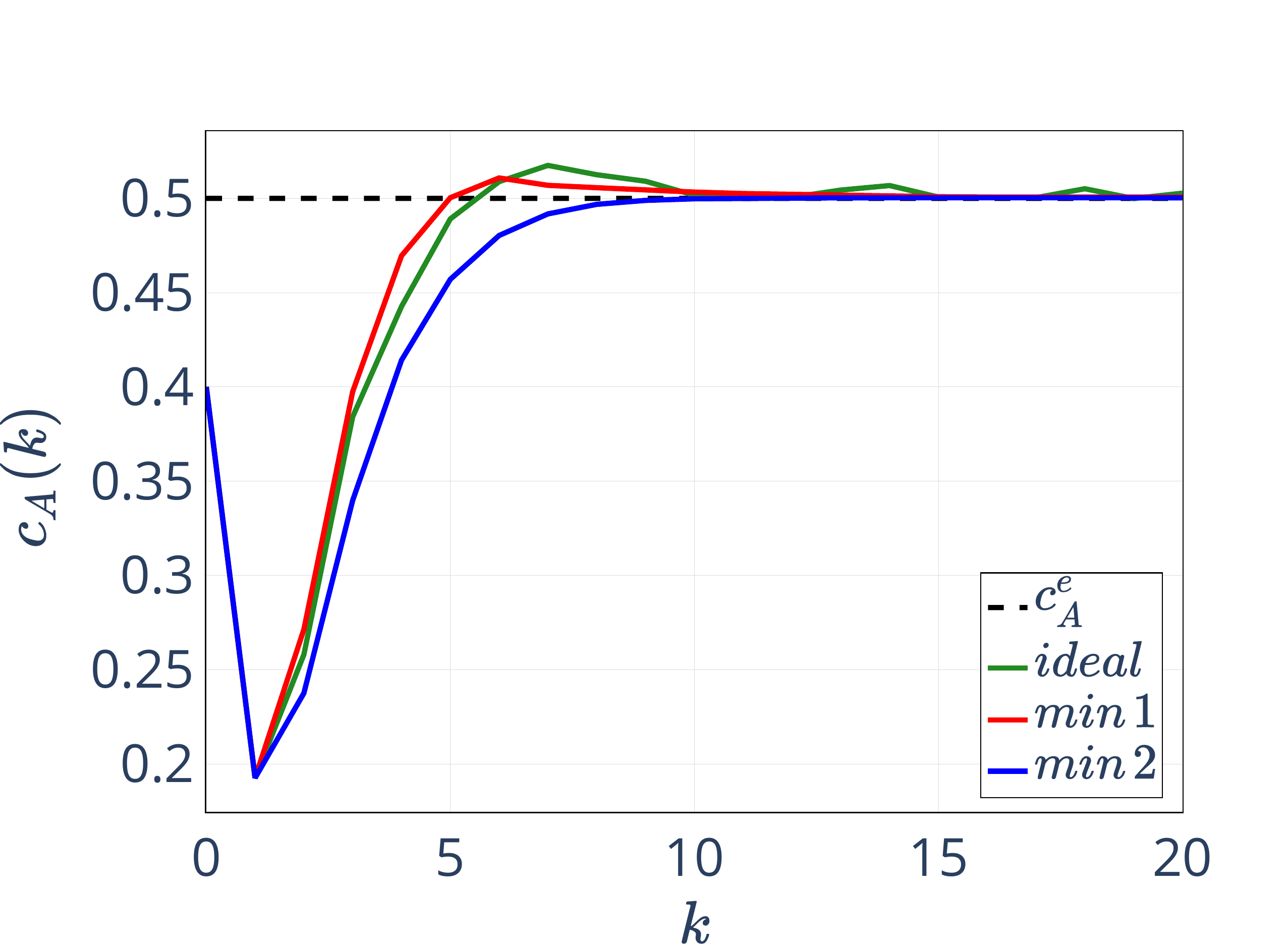}
	\end{minipage}
	\begin{minipage}{0.49\textwidth}
		\includegraphics[width =\textwidth]{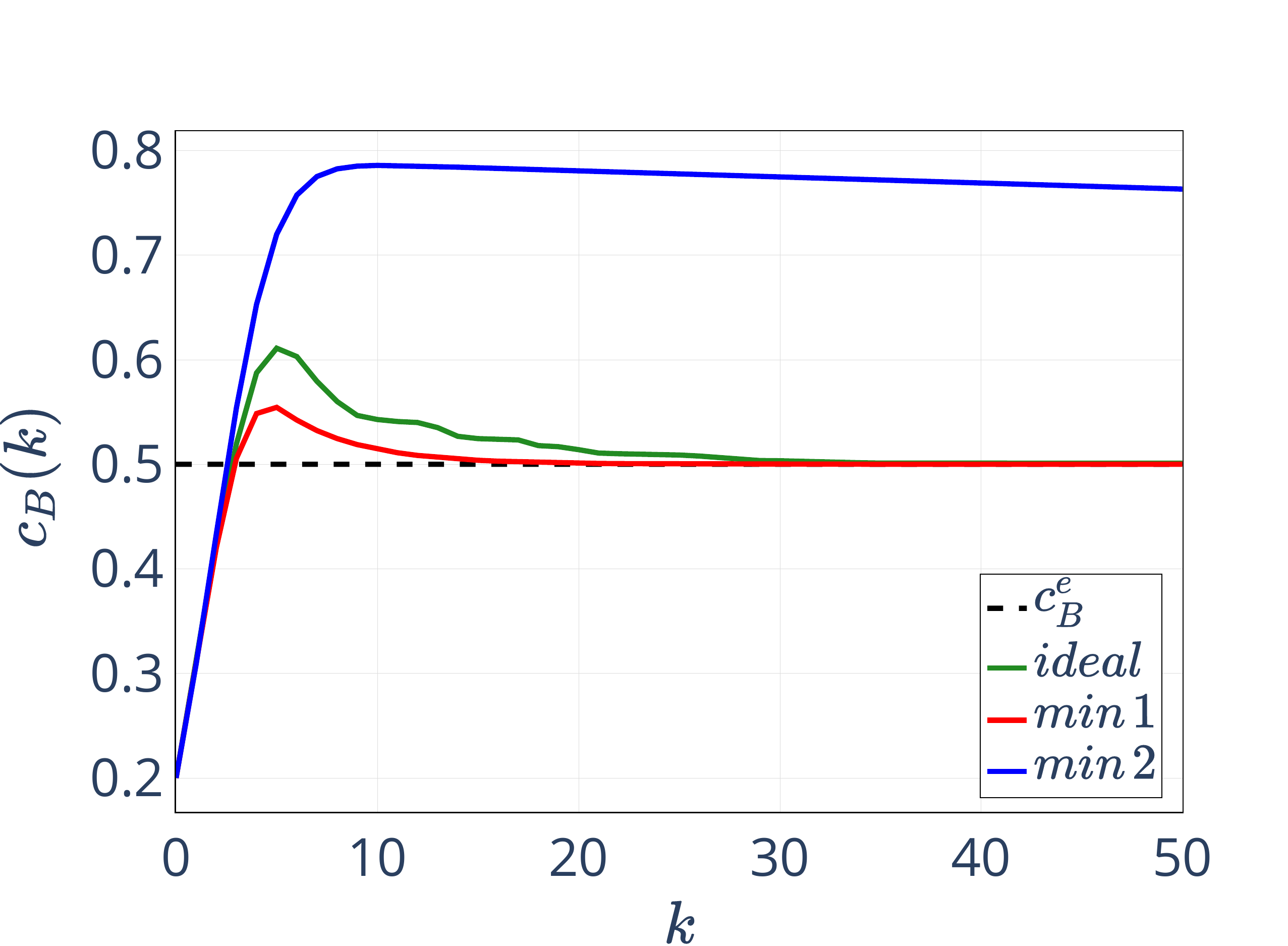}
	\end{minipage}
	\caption{Components of the closed-loop trajectory for the different heuristics}\label{fig: traheu}
\end{center}
\end{figure}

On the left-hand side in Figure \ref{fig: traheu} we observe that for the first component of the trajectory $c_A$ all selection rules deliver a similar behavior. In contrast, on the right-hand side, the behavior of the second component $c_B$ depends strongly on the selection rule. The rule "min 2" aims to maximize the economic yield. Therefore, the second component of the closed-loop has large values and converges only slowly to the equilibrium. While the trajectory of "min 1" reaches a small neighborhood of the equilibrium within 15 iteration steps, "min 2" needs about 2000 iterations to get similarly close to the equilibrium. In Figure \ref{fig: costheu}, we visualize the cost criteria $J_1^N$ and $J_2^N$.
\begin{figure}[h]
\begin{center}
	\begin{minipage}{0.49\textwidth}
		\includegraphics[width =\textwidth]{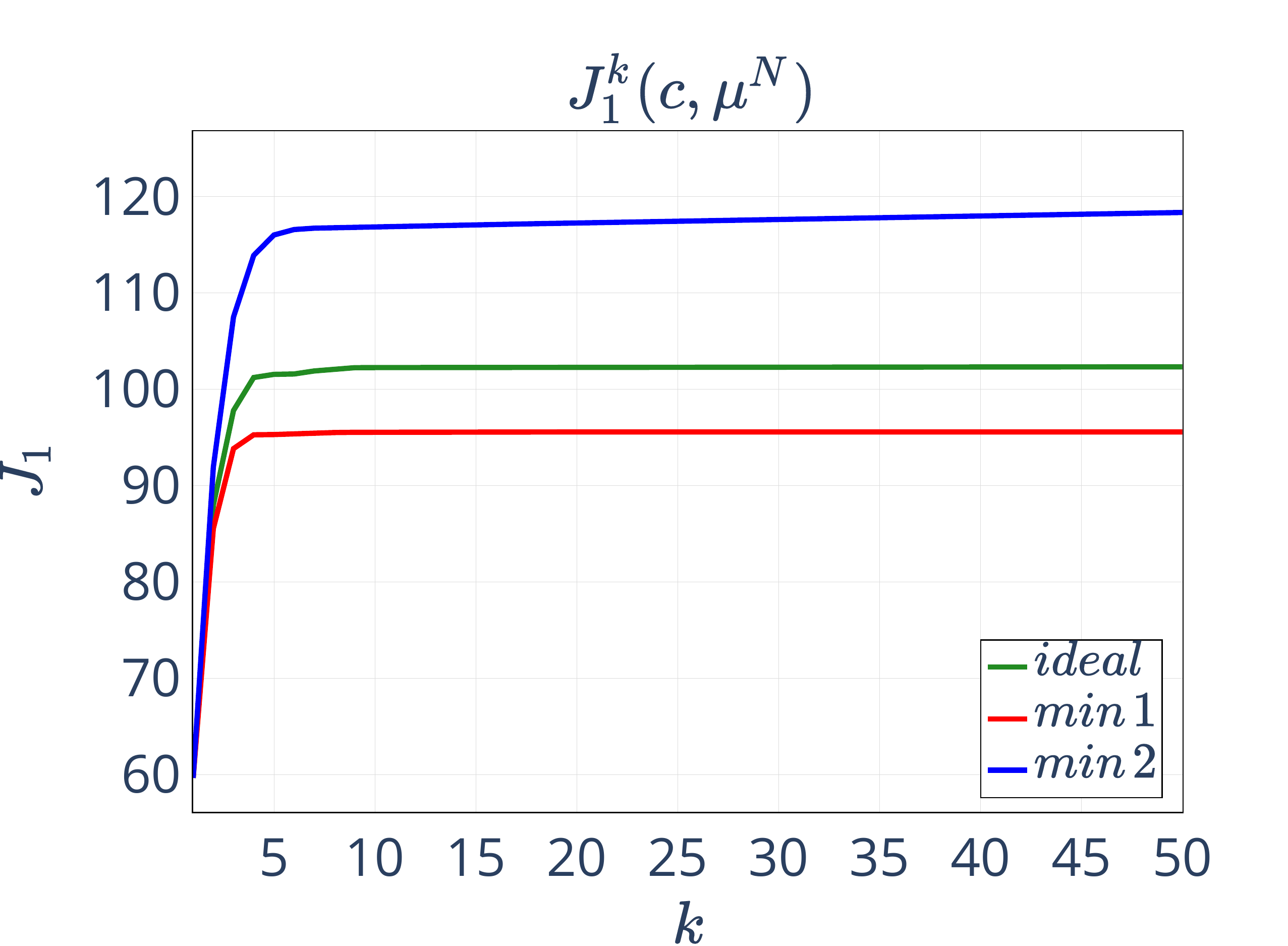}
	\end{minipage}
	\begin{minipage}{0.49\textwidth}
		\includegraphics[width =\textwidth]{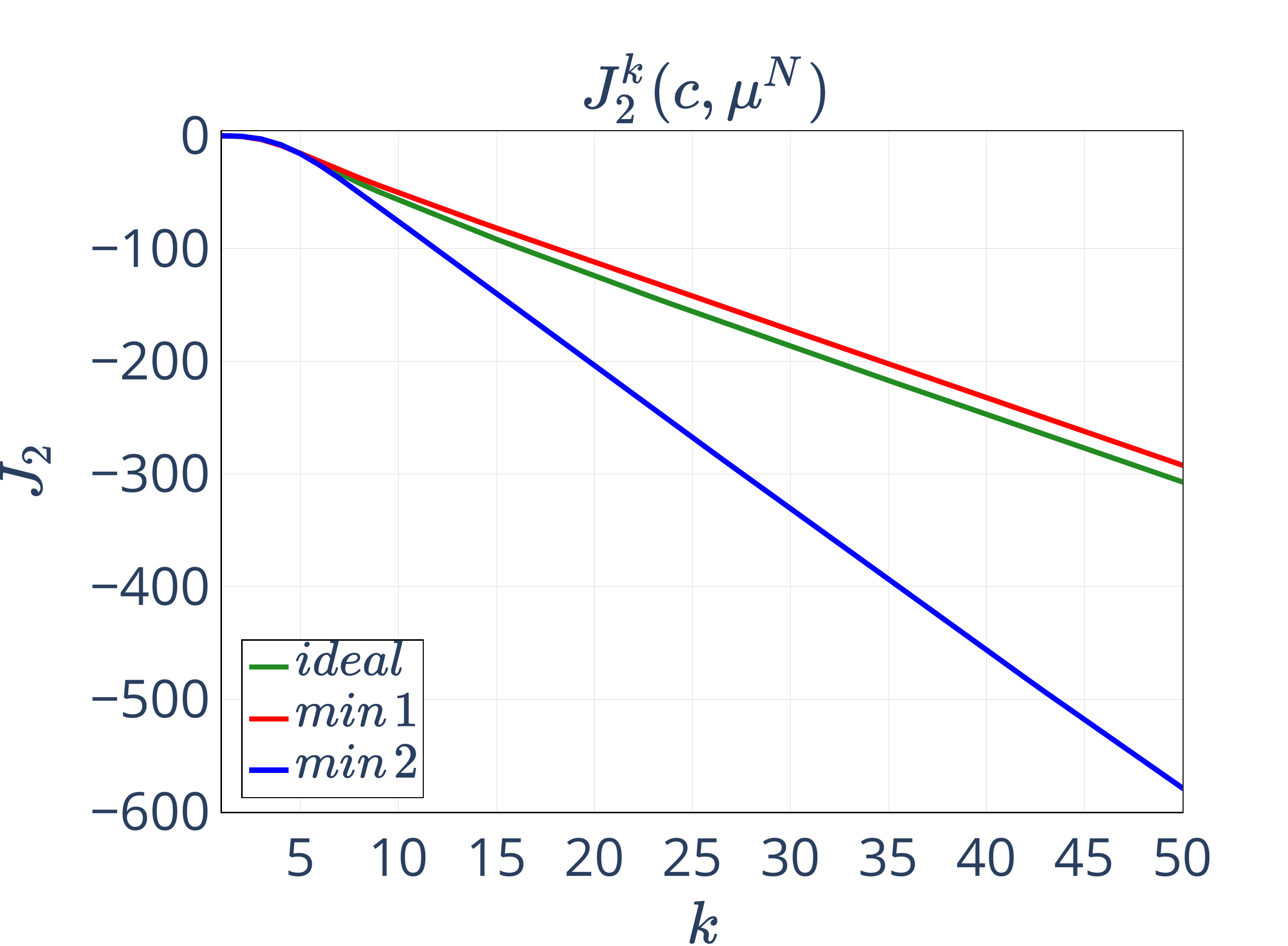}
	\end{minipage}
	\caption{Cost criteria $J_1$ and $J_2$ for the different heuristics}\label{fig: costheu}
\end{center}
\end{figure}

Here, we observe that "min 1" results in a significantly smaller $J_1$ than the other strategies, while $J_2$ is the largest, whereas "min 2" enforces exactly the opposite. In terms of the cost, the main feature of "ideal" becomes particularly clear. The selection rule "ideal" yields a compromise between both costs, which in this example turns out to be closer to "min 1" than to "min 2".
\end{bsp}

While the results in Example \ref{ex: reactor3} show precisely the behavior that one would expect from the different selection rules, a priori it was not clear that the quantitative differences are so pronounced. Indeed, as the following example shows, the effect of the different rules can also be almost negligible.

\begin{bsp}\label{ex: empc}
We consider an economic growth model, introduced in \cite{Brock1972}. The system dynamic is given by
\[x(k+1) = u(k),\quad k\in\N,\]
and the stage cost by
\[\ell_1(x,u)= -\ln(Ax^\alpha - u ),\]
with parameters $A =5$ and $\alpha=0.34$. We impose state and control constraints $\X=[0,10]$ and $\U=[0.1,5]$. As calculated in \cite{Damm2014}, the equilibrium for which the problem is strictly dissipative is given by $\equ=(\eq,\eq)\approx(2.23,2.23)$. 
We use the equilibrium to set the endpoint terminal constraint $\X_0=\{\eq\}$. Next, we introduce the second stage cost
\[\ell_2(x,u)= (x-\eq)^2+0.1(u-u^{e})^2,\]
which additionally stabilizes the equilibrium. Thus, the multiobjective optimal control problem  reads
\begin{align}
\min_{\ub\in\U^N(x_0)}J^N(x_0,\ub)&=\left(\sum_{k=0}^{N-1}\ell_1(x(k,x_0),u(k)), \sum_{k=0}^{N-1}\ell_2(x(k,x_0),u(k))\right)\nonumber\\
\text{s.t.}\quad x(k+1)&=u(k)\nonumber\\
x(0)&=5\nonumber\\
x(N,x_0)\in\X_0&=\{(2.23,2.23)\}\nonumber\\
\X&=[0,10],\; \U=[0.1,5]
\end{align}
Due to the strict dissipativity, the stage cost $\ell_1$ fulfills the required Assumption \ref{ass: modTerminal} and \ref{ass: cont1}. Since we have introduced endpoint terminal constraints Assumption \ref{ass: kappabound} holds with the same argument as in Example \ref{ex: reactor2}. Hence, this example fits in our theoretical setting. Now we check whether the choice of the subsequent efficient solution in Algorithm \ref{alg: modMOMPC2} has an impact on the solution behavior. To this end, we consider the MPC-horizon $N=10$ and the different selection rules as in Example \ref{ex: reactor3}. We chose the first efficient solution such that $J^{10}(x_0,\posN)=(-15.085,7.892)$.
\begin{figure}[h]
\begin{center}
	\begin{minipage}{0.49\textwidth}
		\includegraphics[width =\textwidth]{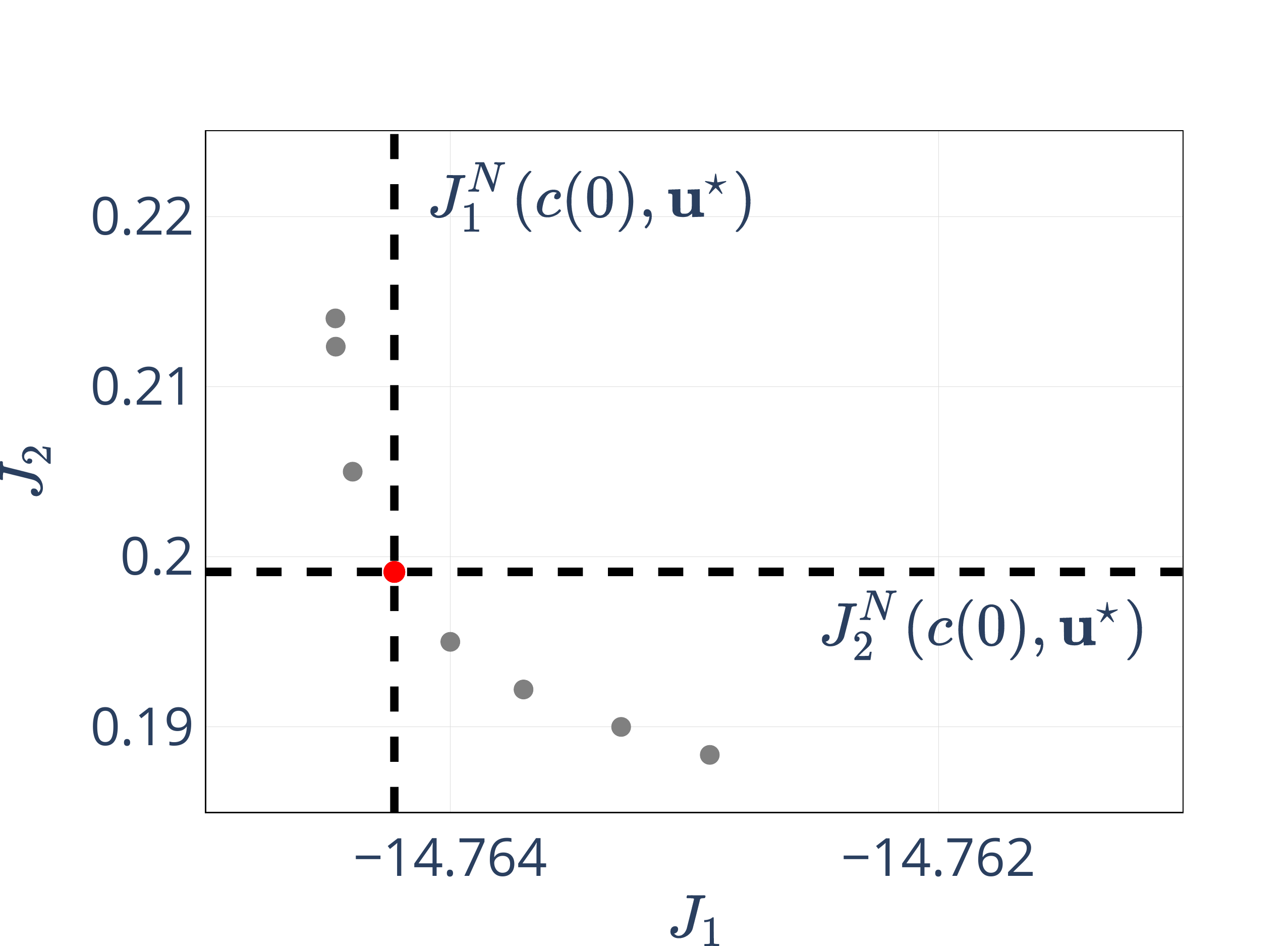}
		\caption{Resulting nondominated set} \label{fig: paretoempc}
	\end{minipage}
	\begin{minipage}{0.49\textwidth}
		\includegraphics[width =\textwidth]{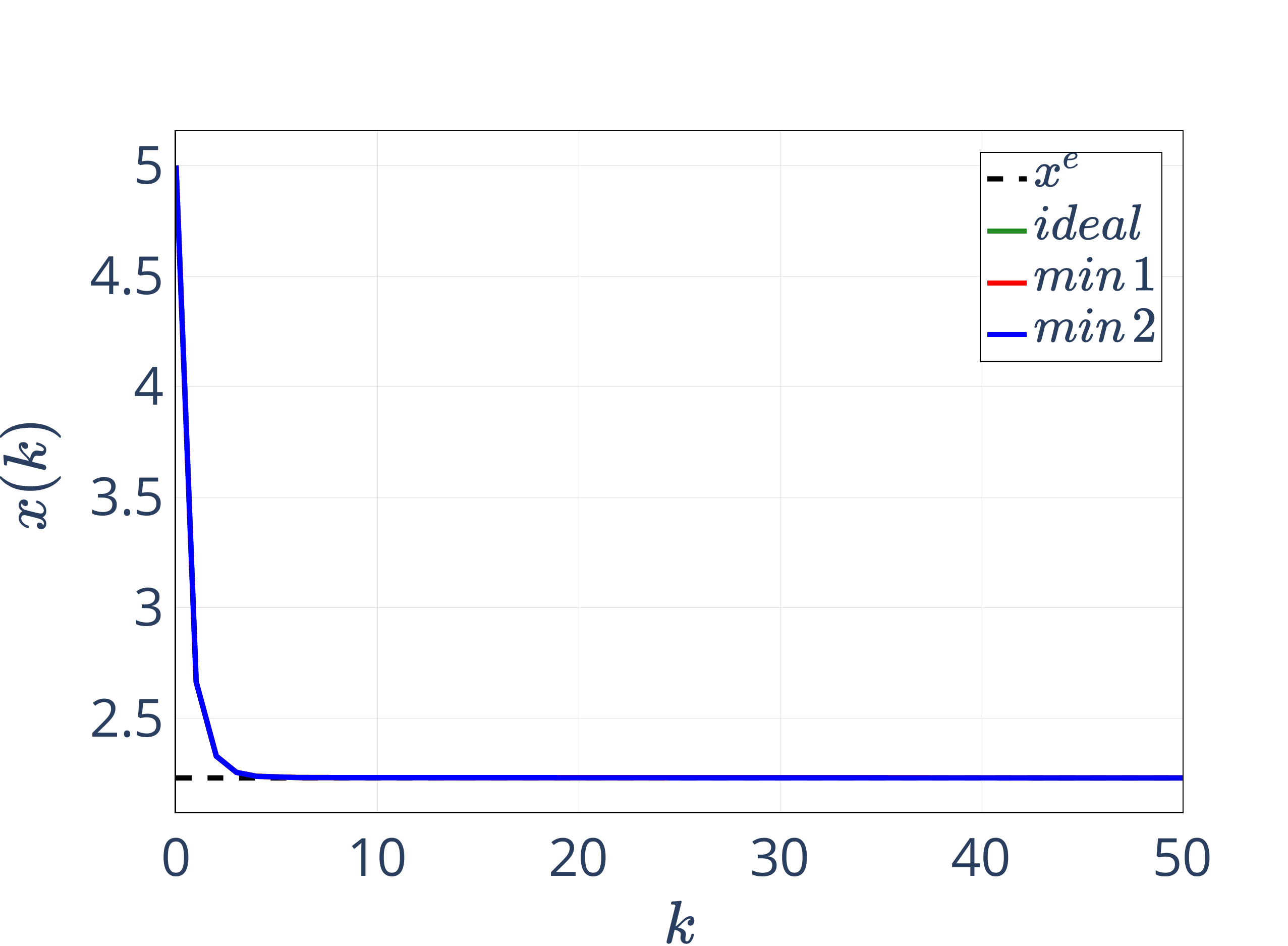}
		\caption{Closed-loop trajectories}\label{fig: empctra}
	\end{minipage}
\end{center}
\end{figure}

In Figure \ref{fig: paretoempc} we observe that in the second iteration only one single point is cut out of the nondominated set and, thus, there is no more degree of freedom in choosing the efficient solution. This suggests that the selection rules have no impact on the solution behavior. This is confirmed by Figure \ref{fig: empctra}, as there are no differences---except for numerical inaccuracies---in the trajectories resulting from the selection rules. The same phenomenon is reflected in the costs. Thus, we can conclude that for this example the choice of the subsequent efficient solutions has no influence on the behavior of the trajectory and the cost criteria.
\end{bsp}

In summary, we can say that the implementation of different selection rules for the subsequent efficient solutions can make a significant difference for the resulting closed loop trajectories and costs, as seen in Example \ref{ex: reactor3}. In contrast, Example \ref{ex: empc} shows that this difference may also be negligible.

\section{Conclusion and Outlook}\label{sec: conclusion}

In this paper we have introduced a new multiobjective MPC algorithm for which we require strong assumptions, i.e., strict dissipativity and the existence of a compatible terminal cost, only for the first cost criterion. For this algorithm we have shown a performance estimate for the first cost function $J_1^N$ as well as averaged performance estimates for all cost function $J_i^N$, $i=1,\dots,s$. Under suitable technical assumption we have shown asymptotic stability of the closed-loop trajectory using time-varying Lyapunov functions. Further, for the algorithm introduced in \cite{Gruene2019a} and using our assumption we state a performance theorem for the other cost functions $J_i^N$, $i=2,\dots,s$, again without requiring strict dissipativity for these costs. In addition, we have numerically illustrated our theoretical results and, in the course of this, investigated the influence on the solution behavior of selection rules for the choice of the subsequent efficient solutions. For this influence we have shown that, depending on the concrete example, it may be significant or negligible.

references.bibFor future research it would be interesting to investigate theoretical performance estimates for different selection rules. In Section \ref{sec:numerics2} we have observed that for some selection rules the upper bound of the performance of the cost function is sharp, while for other selection rules it is not. Thus, the question arises whether we can refine our estimates by taking into account the selection rules from the second iteration on is still open. Another point is the efficiency of the resulting closed-loop trajectory on the infinite time horizon and the corresponding MPC feedback. The question whether we can state an optimality result comparable to those for the standard MPC case, see for instance \cite{Gruene2017a,Gruene2014}, still remains open.
\newline
\newline
\textbf{Data availability} The datasets generated and analyzed during the current study are available from the corresponding author on reasonable request.

{\bibliographystyle{plain}
\bibliography{references} 
}

\end{document}